\date{}
\newtheorem{statement}{}[section]
\newtheorem{theorem}[statement]{Theorem}
\newtheorem{lemma}[statement]{Lemma}
\newtheorem{proposition}[statement]{Proposition}
\newcommand\C{\mathbb C}
\newcommand\T{\mathbb T}
\newcommand\D{\mathbb D}
\newcommand\e{{\rm e}}
\newcommand\eps{\varepsilon}
\newcommand\ind{{\rm 1\kern-.30em I}}
\newcommand\dis{\displaystyle}
\renewcommand \Re{{\mathfrak R}{\rm e}\,}
\newcommand\converge{\mathop{\longrightarrow}\limits}
\newcommand\capa{{\rm Cap}\,}
\newcommand\capam{{\rm Cap}_m\,}
\title{\bf Some examples of composition operators and their approximation numbers on the Hardy space of the bi-disk}
\author{\it Daniel~Li,  Herv\'e~Queff\'elec, L.~Rodr{\'\i}guez-Piazza}
\date{\footnotesize \today}
\begin{document}

\maketitle

\noindent {\bf Abstract.} We give examples of composition operators $C_\Phi$ on $H^2 (\D^2)$ showing that the condition $\|\Phi \|_\infty = 1$ is 
not sufficient for their approximation numbers $a_n (C_\Phi)$ to satisfy $\lim_{n \to \infty} [a_n (C_\Phi) ]^{1/\sqrt{n}} = 1$, contrary to the 
$1$-dimensional case. We also give a situation where this implication holds. We make a link with the Monge-Amp\`ere capacity of the image of $\Phi$.
\medskip

\noindent \emph{Key-words}\/: approximation numbers; Bergman space; bidisk; composition operator; Green capacity; Hardy space; 
Monge-Amp\`ere capacity; weighted composition operator.
\medskip

\noindent \emph{MSC~2010 numbers} -- \emph{Primary}\/: 47B33 -- \emph{Secondary}\/: 30H10 -- 30H20 -- 31B15 -- 32A35 -- 32U20 -- 41A35 -- 46B28


\section{Introduction and notation} \label{sec: intro}

\subsection{Introduction}

The purpose of this paper is to continue the study of composition operators on the polydisk initiated in \cite{BLQR}, and in particular to examine to what 
extent one of the main results of \cite{LQR} still holds. 
\medskip

Let $H$ be a Hilbert space and $T \colon H\to H$ a bounded operator. Recall that the \emph{approximation numbers} of $T$ are defined as:
\begin{displaymath} 
\qquad \qquad a_n (T) = \inf_{{\rm rank}\, R < n} \| T - R\| \, , \quad n \geq 1 \, , 
\end{displaymath} 
and we have: 
\begin{displaymath} 
\| T \| = a_1 (T) \geq a_2 (T) \geq \cdots \geq a_n (T) \geq \cdots
\end{displaymath} 
 The operator $T$ is compact if and only if $a_n (T) \converge_{n \to \infty} 0$. \par
\smallskip

For $d \geq 1$, we define:
\begin{displaymath}
\raise 4 pt \hbox{$\Bigg\{ $}
\begin{array}{rl}
\beta_{d}^{-}(T) & \dis = \liminf_{n\to \infty}\big[a_{n^d}(T)\big]^{1/n} \\ 
\medskip
\beta_{d}^{+}(T) & \dis = \limsup_{n\to \infty}\big[a_{n^d}(T)\big]^{1/n} 
\end{array}
\end{displaymath}
We have:
\begin{displaymath} 
0\leq \beta_{d}^{-}(T)\leq \beta_{d}^{+}(T) \leq 1 \, ,
\end{displaymath} 
and we simply write $\beta_{d}(T)$ in case of equality. 
\smallskip

It may well happen in general (consider diagonal operators) that $\beta_{d}^{-}(T) = 0$ and  $\beta_{d}^{+}(T)=1$.\par
\medskip

When $H = H^2 (\D)$ is the Hardy space on the open unit disk $\D$ of $\C$, and $T = C_\Phi$ is a composition operator, with 
$\Phi \colon \D \to \D$ a non-constant analytic function, we always have (\cite{LIQUEROD}):
\begin{displaymath} 
\beta_{1}^{-}(C_\Phi) > 0 \, ,
\end{displaymath} 
and one of the main results of \cite{LIQUEROD} is the equivalence:
\begin{equation} \label{equiv-dim1}
\beta_{1}^+ (C_\Phi) < 1 \quad \Longleftrightarrow \quad \Vert \Phi\Vert_\infty < 1 \, .
\end{equation} 
An alternative proof was given in \cite{LQR}, as a consequence of a so-called ``spectral radius formula'', which moreover shows that:
\begin{displaymath} 
\beta_{1}^- (C_\Phi) = \beta_{1}^+ (C_\Phi) \, .
\end{displaymath} 

In \cite{BLQR}, for $d \geq 2$, it is proved that, for a bounded symmetric domain $\Omega \subseteq \C^d$, if  
$\Phi \colon \Omega \to \Omega$ is analytic, such that $\Phi (\Omega)$ has a non-void interior, and the composition  operator 
$C_\Phi \colon H^2 (\Omega) \to H^2 (\Omega)$ is compact, then:
\begin{displaymath} 
\beta_d^- (C_\Phi) > 0 \, .
\end{displaymath} 
On the other hand, if $\Omega$ is a product of balls, then:
\begin{displaymath} 
\| \Phi \|_\infty < 1 \quad \Longrightarrow \quad \beta_d^+ (C_\Phi) < 1 \, . 
\end{displaymath} 
We do not know whether the converse holds and the purpose of this paper is to study some examples towards an answer. 
\bigskip

The paper is organized as follows. Section~\ref{sec: intro} is this short introduction, as well as some notations and definitions on singular numbers of operators 
and Hardy spaces of the polydisk to follow. Section~\ref{sec: weighted} contains preliminary results on weighted composition operators in one variable, which 
surprisingly play an important role in the study of non-weighted composition operators in two variables. Section~\ref{sec: splitted} studies the case of symbols 
with ``separated'' variables. Our main one variable result extends in this case. Section~\ref{sec: glued} studies the ``glued case'' 
$\Phi (z_1, z_2) = \big( \phi (z_1), \phi (z_1) \big)$ for which even boundedness is an issue. Here, the Bergman space $B^{2}(\D)$ enters the picture. 
Section~\ref{sec: triangularly} studies the case of ``triangularly separated'' variables. This section lets direct Hilbertian sums of weighted composition operators in 
one variable appear, and it contains our main result: an example of a symbol  $\Phi$ satisfying $\Vert \Phi \Vert_\infty = 1$ and yet 
$\beta_{2}^{+} (C_\Phi) < 1$. The final Section~\ref{sec: capacity} discusses the role of the Monge-Amp\`ere pluricapacity, which is a multivariate extension 
of the Green capacity in the disk. Even though, as evidenced by our counterexample of Section~\ref{sec: triangularly}, this capacity will not capture all the 
behavior of the parameter $\beta_{m} (C_\Phi)$, some partial results are obtained, relying on theorems of S.~Nivoche and V.~Zakharyuta.

\subsection{Notation} 

We denote by $\D$ the open unit disk of the complex plane and by $\T$ its boundary, the $1$-dimensional torus. 

The Hardy space $H^2 (\D^d)$ is the space of holomorphic functions $f \colon \D^d \to \C$ whose boundary values $f^\ast$ on $\T^d$ are square-integrable 
with respect to the Haar measure $m_d$ of $\T^d$, and normed with:
\begin{displaymath} 
\| f \|_2^2 = \| f \|_{H^2 (\D^d)}^2 = \int_{\T^d} |f^\ast (\xi_1, \ldots, \xi_d) |^2 \, dm_d (\xi_1, \ldots, \xi_d) \, .
\end{displaymath} 
If $f (z_1, \ldots, z_d) = \sum_{\alpha_1, \ldots, \alpha_d \geq 0} a_{\alpha_1, \ldots, \alpha_d} \, z_1^{\alpha_1} \cdots z_d^{\alpha_d}$, then:
\begin{displaymath} 
\| f \|_2^2 = \sum_{\alpha_1, \ldots, \alpha_d \geq 0} | a_{\alpha_1, \ldots, \alpha_d} |^2 \, .
\end{displaymath} 

We say that an analytic map $\Phi \colon \D^d \to \D^d$ is a \emph{symbol} if its associated composition operator 
$C_\Phi \colon H^2 (\D^d) \to H^2 (\D^d)$, defined by $C_\Phi (f) = f \circ \Phi$, is bounded. 

We say that $\Phi$ is \emph{truly $d$-dimensional} if $\Phi (\D^d)$ has a non-void interior. 
\smallskip

We will make use of two kinds of symbols defined on $\D$.
\smallskip

The \emph{lens map} $\lambda_\theta \colon \D \to \D$ is defined, for $\theta \in (0, 1)$, by:
\begin{equation} 
\lambda_\theta (z) = \frac{(1 + z)^\theta - (1 - z)^\theta}{(1 + z)^\theta + (1 - z)^\theta} 
\end{equation} 
(see \cite{Shapiro-livre}, p.~27, or \cite{LELIQR}, for more information), and corresponds to $u \mapsto u^\theta$ in the right half-plane. 
\smallskip

The \emph{cusp map} $\chi \colon \D \to \D$ was first defined in \cite{LELIQR-TAMS} and in a slightly different form in \cite{LIQURO-Estimates}; we 
actually use here the modified form introduced in \cite{LELIQR-capacity}, and then used in \cite{LELIQR-approx-Dirichlet}. We first define:
\begin{displaymath}
\chi_0 (z) = \frac{\displaystyle \Big( \frac{z - i}{i z - 1} \Big)^{1/2} - i} {\displaystyle - i \, \Big( \frac{z - i}{i z - 1} \Big)^{1/2} + 1} \, ; 
\end{displaymath}
we note that $\chi_0 (1) = 0$, $\chi_0 (- 1) = 1$, $\chi_0 (i) = - i$, $\chi_0 (- i) = i$, and $\chi_0 (0) = \sqrt{2} - 1$. Then we set: 
\begin{displaymath}
\chi_1 (z) = \log \chi_0 (z), \quad \chi_2 (z) = - \frac{2}{\pi}\, \chi_1 (z) + 1, \quad \chi_3 (z) = \frac{a}{\chi_2 (z)}  \, \raise 1pt \hbox{,} 
\end{displaymath}
and finally:
\begin{displaymath}
\chi (z) = 1 - \chi_3 (z) \, ,
\end{displaymath}
where:
\begin{equation} \label{definition de a}
a = 1 - \frac{2}{\pi} \log (\sqrt{2} - 1) \in (1, 2) 
\end{equation} 
is chosen in order that $\chi (0) = 0$. The image $\Omega$ of the (univalent) cusp map is formed by the intersection of the  inside of the disk 
$D \big(1 - \frac{a}{2} \raise 1pt \hbox{,} \frac{a}{2} \big)$ and the outside of the two disks  
$D \big(1 + \frac{i a}{2} \raise 1pt \hbox{,} \frac{a}{2} \big)$ and $D \big(1  - \frac{ i a}{2} \raise 1pt \hbox{,} \frac{a}{2} \big) \cdot$ 
\medskip

Besides the approximation numbers, we need other singular numbers for an operator $S \colon X \to Y$ between Banach spaces $X$ and $Y$. \par
\smallskip

The \emph{Bernstein numbers} $b_n (S)$, $n \geq 1$, which are defined by:
\begin{equation} \label{Bernstein}
b_n (S) = \sup_E \min_{x \in S_E} \| S x\| \, ,
\end{equation} 
where the supremum is taken over all $n$-dimensional subspaces of $X$ and $S_E$ is the unit sphere of $E$. 
\smallskip

The \emph{Gelfand numbers} $c_n (S)$, $n \geq 1$, which are defined by:
\begin{equation} \label{Gelfand}
c_n (S) = \inf \{ \| S_{\mid M} \| \, ; \ {\rm codim}\, M < n\} \, .
\end{equation} 

The \emph{Kolmogorov numbers} $d_n (S)$, $n \geq 1$, which are defined by:
\begin{equation} \label{Kolmogorov}
d_n (S) = \inf _{\dim E < n} \raise -2pt \hbox{$\bigg[$} \sup_{x \in B_X} {\rm dist}\, (S x,  E) \raise -2pt \hbox{$\bigg]$} \, .
\end{equation} 

Pietsch showed that all $s$-numbers on Hilbert spaces are equal (see \cite{Pietsch}, \S~2, Corollary, or \cite{Pietsch-livre}, Theorem~11.3.4); hence:
\begin{equation} \label{egalite}
a_n (S) = b_n (S) = c_n (S) = d_n (S) \, .
\end{equation} 
%

We denote $m$  the normalized Lebesgue measure on $\T = \partial \D$. If $\varphi \colon \D \to \D$, $m_\varphi$ is the pull-back measure on 
$\overline{\D}$ defined by $m_\varphi (E) = m [{\varphi^\ast}^{- 1} (E)]$, where $\varphi^\ast$ stands for the non-tangential boundary values of $\varphi$. 
\smallskip

The notation $A \lesssim B$ means that $A \leq C \, B$ for some positive constant $C$ and we write $A \approx B$ if we have both $A \lesssim B$ and 
$B \lesssim A$.

\section{Preliminary results on weighted composition operators on $H^2 (\D)$} \label{sec: weighted}

We see in this section that the presence of a ``rapidly decaying'' weight allows simpler estimates for the approximation numbers of a corresponding weighted 
composition operator. Such a study, but a bit different, is made in \cite{Lechner-LQR}. 
\par\smallskip

Let $\varphi \colon \D\to \D$ a non-constant analytic self-map in the disk algebra $A(\D)$ such that, for some constant $C > 1$ and for all $z\in \D$:
\begin{equation}\label{veritas}
\varphi (1) = 1\, ,\quad |1 - \varphi (z)|\leq 1 \, , \quad |1 - \varphi(z)|\leq C \, (1 -|\varphi(z)|) 
\end{equation}
as well as $\varphi(z)\neq 1$ for $z\neq 1$. We can take for example $\varphi=\frac{1+ \lambda_{\theta}}{2}$ where $\lambda_\theta$ is the lens map 
with parameter $\theta$. 

Let $w \in H^\infty$ and let $T$ be the weighted composition operator 
\begin{displaymath} 
T = M_{w\circ \varphi} C_\varphi \colon H^2 \to H^2 \, . 
\end{displaymath} 
Note that $M_{w\circ \varphi} C_\varphi = C_\varphi M_w$. We first show that:
\begin{theorem} \label{fish} 
Let  $T = M_{w\circ \varphi} C_\varphi \colon H^2 \to H^2$ be as above and let $B$ be a Blaschke product with length $< N$. Then, with the implied 
constant depending only on the number $C$ in \eqref{veritas} (and of $\varphi$):
\begin{displaymath} 
a_{N}(T) \lesssim \sup_{|z - 1|\leq 1,\  z\in \varphi (\D)} |B (z)| \, |w(z)| \, .
\end{displaymath} 
\end{theorem}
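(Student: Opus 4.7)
The plan is to exploit the equality of approximation and Gelfand numbers on Hilbert spaces, namely equation \eqref{egalite}, which reduces the task to producing an explicit closed subspace $M \subseteq H^2$ of codimension $<N$ on which the restriction of $T$ has small norm. Once we have such an $M$, we will automatically get $a_N(T) = c_N(T) \leq \|T_{|M}\|$.

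The natural choice is $M := B \, H^2$. Since $B$ is an inner function, multiplication by $B$ is an isometry on $H^2$, so $M$ is closed; and because $B$ is a Blaschke product of length $< N$, the model space $H^2 \ominus BH^2$ has dimension equal to the number of zeros of $B$ (counted with multiplicity), hence $\mathrm{codim}\, M < N$. Moreover, every $f \in M$ factors uniquely as $f = Bg$ with $g \in H^2$ and $\|g\|_2 = \|f\|_2$.

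For such $f$, since $M_{w\circ\varphi} C_\varphi = C_\varphi M_w$ and $C_\varphi$ commutes with multiplication by $B \circ \varphi$ on the target, we have $T f = (w\circ \varphi) (B\circ \varphi) (g \circ \varphi)$, so the pointwise estimate
\begin{displaymath}
|T f(\zeta)| \leq \Big( \sup_{z \in \varphi(\D)} |B(z)\, w(z)| \Big) \, |g(\varphi(\zeta))|
\end{displaymath}
holds for all $\zeta \in \D$. The first condition in \eqref{veritas}, $|1-\varphi(z)| \leq 1$, guarantees that $\varphi(\D) \subseteq \overline{D(1,1)}$, so the constraint $|z-1| \leq 1$ that appears in the statement is automatic and the sup equals $M := \sup_{|z-1|\leq 1,\ z\in\varphi(\D)} |B(z)w(z)|$. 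Passing to boundary values and integrating then gives $\|T f\|_2 \leq M\, \|C_\varphi g\|_2 \leq M\, \|C_\varphi\|\, \|f\|_2$, using Littlewood's subordination to bound $C_\varphi$ on $H^2$. Hence $\|T_{|BH^2}\| \lesssim M$, as required.

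There is no real obstacle here: the whole content is spotting that $BH^2$ is the natural codimension-$<N$ test subspace for a Gelfand-type estimate and noticing that factoring $f = Bg$ converts the Blaschke bound into a pointwise bound on $T f$. The Carleson-type condition $|1-\varphi(z)| \leq C(1-|\varphi(z)|)$ and the requirement $\varphi(z)\neq 1$ for $z \neq 1$ in \eqref{veritas} are not needed for this statement itself; they presumably enter when this lemma is combined with constructions of Blaschke products in the applications that follow.
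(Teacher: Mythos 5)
Your proof is correct, and it reaches the conclusion by a genuinely more elementary route than the paper. Both arguments use the same two structural ingredients: the test subspace $BH^2$ of codimension $<N$ together with the equality $a_N=c_N$ from \eqref{egalite}, and the isometric factorization $f=Bg$. Where you diverge is in how the norm of $T$ restricted to $BH^2$ is estimated. The paper writes $\Vert Tf\Vert^2=\int |B|^2|w|^2|g|^2\,dm_\varphi$ and invokes Carleson's embedding theorem for the measure $\sigma=|B|^2|w|^2\,dm_\varphi$, using the window-transfer observation \eqref{prel} to reduce all Carleson windows to windows centered at $1$ before pulling out the supremum of $|Bw|^2$; you instead pull the supremum out of the integral immediately and control what remains, $\int|g\circ\varphi|^2\,dm$, by Littlewood subordination, i.e.\ by $\Vert C_\varphi\Vert^2\Vert g\Vert^2$. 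For the theorem as stated the two are equivalent — since $\varphi(\D)\subseteq\{|z-1|\leq 1\}$ the supremum is global anyway, so the Carleson localization buys nothing here, and your constant ($\Vert C_\varphi\Vert^2$, hence depending only on $\varphi(0)$) is if anything cleaner than what the statement promises. You are also right that the third condition in \eqref{veritas} and the constant $C$ play no role in this particular bound; in the paper they enter only through \eqref{prel}, which your argument bypasses, and they resurface in Theorem~\ref{special} where the Blaschke product is built from the geometry of $K_\varphi$. The one point to state a little more carefully is the passage to boundary values: the supremum in the statement is over the open set $\varphi(\D)$, so it is cleanest to make the pointwise estimate on the circles $r\T$, $r<1$, and let $r\to 1$ in $\sup_{r<1}\int_\T|Tf(r\zeta)|^2\,dm(\zeta)$, rather than evaluating $B\circ\varphi$ and $w\circ\varphi$ directly on $\T$ (the paper has the same mild imprecision, ending its proof with a supremum over $\overline{\varphi(\D)}$).
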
 
\begin{proof}
The following preliminary observation (see also \cite{LELIQR}, p.~809),  in which  we denote by $S (\xi, h)=\{z\in \D \, ; \ |z - \xi|\leq h\}$ the Carleson 
window with center $\xi\in \T$ and size $h$, and by $K_\varphi$  the support of the pull-back measure $m_{\varphi} $, will be useful.
\begin{equation}\label{prel} 
u\in S (\xi,h) \cap K_\varphi \quad \Longrightarrow  \quad u\in S (1, Ch) \cap K_\varphi \, .  
\end{equation}
Indeed, if $|u - \xi|\leq h$ and $u \in K_\varphi$, \eqref{veritas} implies:
\begin{displaymath} 
1 - |u|\leq  |u - \xi|\leq h \quad \text{and} \quad |u - 1|\leq C (1-|u|)\leq Ch \, .
\end{displaymath} 
Set $E = B H^2$.  This is a subspace of codimension $<N$. If $f = B g \in E$, with $\Vert g \Vert = \Vert f\Vert$ (isometric division by $B$ in $BH^2$), 
we have $Tf = (w B g) \circ \varphi$, whence:
\begin{displaymath} 
\Vert T(f) \Vert^2 = \int_{\D} |B|^2|w|^2 |g|^2 dm_{\varphi} \, ,
\end{displaymath} 
implying $\Vert T(f)\Vert^2\leq \Vert f\Vert^2 \Vert J\Vert^2$ where $J \colon H^2\to L^{2}(\sigma)$ is the natural embedding and where
\begin{displaymath} 
\sigma = |B|^2|w|^2  dm_{\varphi} \, . 
\end{displaymath} 
Now,  Carleson's embedding theorem for the measure $\sigma$ and \eqref{prel} show that (the implied constants being absolute):
\begin{align*}
\Vert J\Vert^2 
& \lesssim \sup_{\xi\in \T,\ 0<h<1} \frac{1}{h} \int_{S(\xi,h)\cap K_\varphi}  |B|^2|w|^2  dm_{\varphi} \\
& \lesssim \sup_{ 0<h<1} \frac{1}{h} \int_{S(1,Ch)\cap K_\varphi}  |B|^2|w|^2  dm_{\varphi} \\
& \lesssim\bigg(\sup_{|z-1|\leq 1,\ z\in \overline{\varphi(\D)}} 
|B(z)|^2 |w (z)|^2 \bigg)\, \bigg(\sup_{ 0<h<1} \frac{1}{h} \int_{S(1,Ch)\cap K_\varphi}  dm_{\varphi}\bigg) \\
&\lesssim  \sup_{|z - 1|\leq 1,\ z\in \overline{\varphi(\D)}} |B(z)|^2|w(z)|^2 \, ,
\end{align*}
since  $m_\varphi$ is a Carleson measure for $H^2$ and where we used that, according to \eqref{veritas}:
\begin{displaymath} 
K_\varphi \subseteq \overline{\varphi(\D)} \subseteq \{z\in \D \, ; \ |z - 1|\leq 1\} \, .
\end{displaymath} 

This ends the proof of Theorem~\ref{fish} with help of  the equality of $a_{N}(T)$ with the Gelfand number $c_N (T)$ recalled in \eqref{egalite}.
\end{proof}

In order to specialize efficiently the general Theorem~\ref{fish}, we recall the following simple Lemma~2.3 of \cite{LELIQR}, where:
\begin{equation} \label{distance pseudo}
\qquad \qquad \rho (a, b) = \bigg| \frac{a - b}{1 - \bar{a} b} \bigg| \, , \qquad a, b \in \D \, ,
\end{equation} 
is the \emph{pseudo-hyperbolic distance}:
\begin{lemma} [\cite{LELIQR}] \label{rec} 
Let $a, b\in \D$ such that $|a - b|\leq L \min (1 - |a|, 1 -|b|)$. Then:
\begin{displaymath} 
\rho (a, b) \leq \frac{L}{\sqrt{L^2 +1}} =: \kappa < 1 \, .
\end{displaymath} 
\end{lemma}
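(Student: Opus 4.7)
The plan is to reduce the statement to the classical identity
\begin{equation*}
1 - \rho(a,b)^{2} = \frac{(1-|a|^{2})(1-|b|^{2})}{|1-\bar{a}b|^{2}}
\end{equation*}
and then a one-line estimate. The main point is that one does not need to control $|1-\bar a b|$ directly; it is cleaner to work with the ratio $\rho^{2}/(1-\rho^{2})$.

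First, I rearrange the identity above together with the definition of $\rho$ in \eqref{distance pseudo} to obtain
\begin{equation*}
\frac{\rho(a,b)^{2}}{1-\rho(a,b)^{2}} \;=\; \frac{|a-b|^{2}}{(1-|a|^{2})(1-|b|^{2})}\,\cdot
\end{equation*}
Next, I use the trivial bound $1-|a|^{2}\geq 1-|a|$ (and similarly for $b$) to minorize the denominator, which gives
\begin{equation*}
\frac{\rho(a,b)^{2}}{1-\rho(a,b)^{2}} \;\leq\; \frac{|a-b|^{2}}{(1-|a|)(1-|b|)}\,\cdot
\end{equation*}
Now I plug in the hypothesis $|a-b|\leq L\min(1-|a|,1-|b|)$, which implies $|a-b|^{2}\leq L^{2}(1-|a|)(1-|b|)$, and conclude $\rho^{2}/(1-\rho^{2})\leq L^{2}$. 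Solving this inequality for $\rho$ yields $\rho(a,b)\leq L/\sqrt{L^{2}+1}=\kappa<1$, as desired.

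There is no real obstacle here: the whole argument is a short algebraic manipulation. The only ``step'' worth isolating is the choice to work with $\rho^{2}/(1-\rho^{2})$ rather than trying to bound $|1-\bar a b|$ from below directly (which would require splitting $1-\bar a b = (1-|a|^{2}) + \bar a(a-b)$ and juggling signs).
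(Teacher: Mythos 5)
Your proof is correct: the identity $1-\rho(a,b)^2=\frac{(1-|a|^2)(1-|b|^2)}{|1-\bar a b|^2}$ is valid, the reduction to $\rho^2/(1-\rho^2)\leq L^2$ via $1-|a|^2\geq 1-|a|$ and the hypothesis is sound, and solving for $\rho$ gives exactly the claimed bound. The paper does not reprove this lemma (it simply quotes Lemma~2.3 of \cite{LELIQR}), and your argument is the standard one used there, so there is nothing further to compare.
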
 
We can now state:
\begin{theorem} \label{special} 
Assume that $\varphi$ is as in \eqref{veritas} and that the weight $w$ satisfies, for some parameters $0< \theta \leq 1$ and $R > 0$:
\begin{displaymath} 
\qquad |w(z)|\leq \exp\bigg( - \frac{R}{|1 - z|^\theta}\bigg) \, , \quad \forall z\in \D \text{ with } \Re z \geq 0 \, .
\end{displaymath} 
Then, the approximation numbers of $T = M_{w\circ \varphi} C_\varphi$ satisfy:
\begin{displaymath} 
a_{nm+1} (T) \lesssim \max \big[\exp(-a n), \exp(- R \, 2^{m\theta})\big] \, ,
\end{displaymath} 
for all integers $n, m\geq 1$, where $a = \log [\sqrt{16C^2 +1} / (4C)] > 0$ and $C$ is as in \eqref{veritas}.
\end{theorem}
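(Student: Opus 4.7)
The plan is to apply Theorem~\ref{fish} with $N = nm+1$ to a cleverly designed Blaschke product $B$ of length $nm$, chosen so that the product $|B(z)||w(z)|$ is uniformly small on the relevant region $\{z \in \overline{\varphi(\D)} \, ; \ |z-1|\leq 1\}$. The strategy is to decouple the two mechanisms: the weight $w$ is extremely small very close to $1$, while $|B|$ can be made small away from $1$ by placing Blaschke zeros there. We balance these by a dyadic decomposition.

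First, I would decompose the region into annular pieces
\begin{displaymath}
A_k = \{z\in \overline{\varphi(\D)} \, ; \ 2^{-k-1}\leq |1-z|\leq 2^{-k}\}, \ k=0,\ldots,m-1, \quad A_m = \{z \in \overline{\varphi(\D)} \, ; \ |1-z|\leq 2^{-m}\}.
\end{displaymath}
On $A_m$ no work is needed: the hypothesis on $w$ gives $|w(z)|\leq \exp(-R\,2^{m\theta})$ (since $\Re z \geq 0$ automatically on $|z-1|\leq 1$), so any bound $|B|\leq 1$ on this set suffices. On the remaining annuli I need to make $|B|$ small.

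Next, I would define the Blaschke product
\begin{displaymath}
B(z) = \prod_{k=0}^{m-1} \left( \frac{z - a_k}{1 - a_k z}\right)^n, \quad a_k = 1 - 2^{-k},
\end{displaymath}
which has length exactly $nm$. The key geometric estimate is that, for $z \in A_k$, the triangle inequality gives $|z - a_k|\leq 2\cdot 2^{-k}$, while the Stolz-type condition \eqref{veritas} gives $1-|z|\geq |1-z|/C \geq 2^{-k-1}/C$; combined with $1-|a_k|=2^{-k}$, we obtain
\begin{displaymath}
|z-a_k| \leq 4C \min(1-|z|, 1-|a_k|).
\end{displaymath}
Applying Lemma~\ref{rec} with $L=4C$ then yields $\rho(z,a_k)\leq \kappa := 4C/\sqrt{16C^2+1}$, and since the other factors in $B$ are bounded by $1$, we get $|B(z)|\leq \kappa^n = e^{-an}$ for $z\in A_k$, with $a=\log(\sqrt{16C^2+1}/(4C))$ as in the statement.

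Combining the two cases gives $\sup_{|z-1|\leq 1,\ z\in\overline{\varphi(\D)}} |B(z)||w(z)| \lesssim \max\bigl[\exp(-an),\ \exp(-R\,2^{m\theta})\bigr]$, and Theorem~\ref{fish} then delivers the stated bound on $a_{nm+1}(T)$. The only subtle point is the geometric verification that the dyadic centers $a_k$ lie at controlled pseudo-hyperbolic distance from every point of $A_k$; this is precisely where condition \eqref{veritas} is used, through Lemma~\ref{rec}. Everything else (the dyadic bookkeeping, the bound $|B|\leq 1$ off the critical annulus) is routine.
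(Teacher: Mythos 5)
Your proof is correct and follows essentially the same route as the paper: the identical Blaschke product $\prod_{k<m}\big(\frac{z-a_k}{1-a_kz}\big)^n$ with dyadic zeros $a_k=1-2^{-k}$, the same dyadic case split (weight handles $|1-z|\leq 2^{-m}$, Blaschke factor handles each annulus via Lemma~\ref{rec} with $L=4C$), and the same final appeal to Theorem~\ref{fish} with $N=nm+1$. No gaps.
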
 
\begin{proof}
Let $p_l = 1 - 2^{- l}$, $0 \leq l < m$ and let $B$ be the Blaschke product:
\begin{displaymath} 
B (z) = \prod_{0 \leq l < m} \bigg(\frac{z - p_l}{1 - p_{l} z}\bigg)^n \, . 
\end{displaymath} 
Let $z \in K_\varphi \cap \D$ so that $0<|z - 1|\leq 1$. 
Let $l$ be the non-negative integer such that $2^{- l - 1}< |z - 1| \leq 2^{- l}$.  We separate two cases:
\smallskip

\noindent{\sl Case 1: $l\geq m$}. Then, \emph{the weight does the job}. Indeed, majorizing $|B (z)|$ by $1$  and using the assumption on $w$, we get:
\begin{align*}
|B(z)|^{2}|w(z)|^2 
&\leq |w(z)|^2 \leq \exp \Big(- \frac{2R}{|1 - z|^{\theta}} \Big) \\ 
& \leq \exp (- 2 R \, 2^{l\theta}) \leq \exp (- 2 R \, 2^{m\theta}) \, .
\end{align*}

\noindent {\sl Case 2: $l < m$}. Then, \emph{the Blaschke product does the job}. Indeed, majorize $|w(z)|$ by $1$ and estimate $|B(z))|$ more accurately 
with help of Lemma~\ref{rec}; we observe that 
\begin{displaymath} 
|z - p_l| \leq |z - 1|+ 1 - p_l \leq 2 \times 2^{- l} = 2 (1 - p_l) \leq 4 C (1-p_l) 
\end{displaymath} 
and then, since $z\in K_\varphi$, we can write  with   $C\geq 1$ as in \eqref{veritas}:
\begin{displaymath} 
1 -|z|\geq \frac{1}{C} \, |1 - z|\geq  \frac{1}{2 C} \, 2^{- l}\geq \frac{1}{4 C} \, |z - p_l| \, ,  
\end{displaymath} 
so that the assumptions of Lemma~\ref{rec} are verified with $L = 4C$, giving:
\begin{displaymath} 
\rho (z, p_l) \leq  \frac{4 C}{\sqrt{16 C^2 +1}} = \exp (- a) < 1 \, . 
\end{displaymath} 
Hence, by definition, since $l< m$: 
\begin{displaymath} 
|B (z)| \leq [ \rho (z, p_l) ]^n \leq \exp(- a n) \, .
\end{displaymath} 

Putting both cases together, and observing that our Blaschke product has length $n m < n m+1$, we get the result by applying Theorem~\ref{fish} with 
$N = n m+1$. 
\end{proof}
%

\subsection{Some remarks}

{\bf 1.} Twisting a composition operator by a weight may improve the compactness of this composition operator, or even may make this weighted composition 
operator compact though the non-weighted was not (see \cite{GKP} or \cite{Lechner-LQR}). However, this is not possible for all symbols, as seen in the 
following proposition.
\begin{proposition} \label{evid}
Let $w \in H^\infty$. If $\varphi$ is inner, or more generally if $|\varphi| = 1$ on a subset of $\T$ of positive measure, then $M_w \, C_\varphi$ is never 
compact (unless $w \equiv 0$).
\end{proposition}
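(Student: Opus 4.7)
The plan is to exhibit a bounded sequence in $H^2$ that tends weakly to $0$ on which $M_w C_\varphi$ fails to be norm-null, thereby contradicting compactness. The natural candidate is the monomial sequence $f_n(z) = z^n$, since $\|f_n\|_2 = 1$ and $f_n \to 0$ weakly in $H^2$ (each Taylor coefficient functional goes to zero).

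Next I would compute, using the isometry with boundary values,
\begin{displaymath}
\| M_w C_\varphi f_n \|_2^2 = \int_\T |w^\ast(\xi)|^2 \, |\varphi^\ast(\xi)|^{2n} \, dm(\xi).
\end{displaymath}
Letting $E \subseteq \T$ be the positive-measure set on which $|\varphi^\ast| = 1$, one gets the uniform lower bound
\begin{displaymath}
\| M_w C_\varphi f_n \|_2^2 \geq \int_E |w^\ast|^2 \, dm.
\end{displaymath}
The remaining point is to show this lower bound is strictly positive when $w \not\equiv 0$. Here one invokes the standard fact that a non-zero $H^\infty$ (indeed $H^p$) function has $\log |w^\ast| \in L^1(\T)$, so $w^\ast \neq 0$ almost everywhere on $\T$; in particular $|w^\ast|^2 > 0$ a.e.\ on $E$, which forces $\int_E |w^\ast|^2\, dm > 0$.

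Putting these together, $\|M_w C_\varphi f_n\|_2$ stays bounded below by a positive constant, contradicting the fact that a compact operator must map weakly-null sequences to norm-null ones. I expect no real obstacle here; the only subtlety worth flagging explicitly is the a.e.\ non-vanishing of $w^\ast$, which is what prevents $w$ from ``killing'' the mass of $|\varphi^\ast|^{2n}$ concentrated on $E$.
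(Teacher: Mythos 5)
Your proof is correct and takes essentially the same route as the paper: both test the operator on the weakly null sequence $(z^n)$, bound $\|M_w C_\varphi(z^n)\|_2^2$ from below by $\int_E |w^\ast|^2\, dm$ on the set $E$ where $|\varphi^\ast|=1$, and invoke the fact that a non-zero $H^\infty$ function has a.e.\ non-vanishing boundary values (the paper cites Duren, Theorem~2.2 for this last point). The only difference is that you argue the contrapositive directly rather than assuming compactness and deducing $w \equiv 0$.
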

\begin{proof}
Indeed, suppose $T = M_w \, C_\varphi$ compact. Since $(z^n)_n$ converges weakly to $0$ in $H^2$ and since $T (z^n) = w \, \varphi^{n}$, we should have, 
since $|\varphi|= 1$ on $E$, with $m (E) > 0$: 
\begin{displaymath} 
\int_{E} |w|^2 \, dm = \int_{E} |w|^2 |\varphi|^{2 n} \, dm 
\leq \int_{\T} |w|^2 |\varphi|^{2 n} \, dm =  \Vert T (z^n) \Vert^2  \converge_{n \to \infty} 0 \, ,
\end{displaymath} 
but this would imply that $w$ is null a.e. on $E$ and hence $w \equiv 0$ (see \cite{Duren}, Theorem~2.2), which was excluded.
\end{proof} 

Note that \'E.~Amar and A.~Lederer proved in \cite{Amar} that $|\varphi| = 1$ on a set of positive measure if and only if 
$\varphi$ is an exposed point of  of the unit ball of $H^\infty$; hence the following proposition can be viewed as the (almost) opposite case.

\begin{proposition} \label{exposed}
Let $\varphi \colon \D \to \D$ such that $\| \varphi \|_\infty = 1$. Assume that: 
\begin{displaymath} 
\int_\T \log (1 - |\varphi|) \, dm > - \infty 
\end{displaymath} 
(meaning that $\varphi$ is not an extreme point of the unit ball of $H^\infty$: see \cite{Duren}, Theorem~7.9). Then, if $w$ is an outer function such that 
$|w| = 1 - |\varphi|$, the weighted composition operator $T = M_w C_\varphi$ is Hilbert-Schmidt.
\end{proposition}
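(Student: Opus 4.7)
The plan is to compute the Hilbert-Schmidt norm of $T$ directly using the standard orthonormal basis $(z^n)_{n\geq 0}$ of $H^2(\D)$ and show that it is finite. The key point is that the hypothesis $\int_\T \log(1-|\varphi|)\,dm > -\infty$ is exactly what ensures (a) the existence of an outer function $w$ with boundary modulus $|w^\ast| = 1 - |\varphi^\ast|$ on $\T$, and (b) that $|\varphi^\ast| < 1$ almost everywhere (otherwise the log integral would be $-\infty$). Once we know $|\varphi| < 1$ a.e.\ on the boundary, the geometric series $\sum |\varphi|^{2n}$ converges a.e.\ to $1/(1-|\varphi|^2)$.

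The computation then goes as follows. Since $T(z^n) = w\cdot \varphi^n$, monotone convergence gives
\begin{displaymath}
\|T\|_{HS}^2 \;=\; \sum_{n=0}^{\infty} \|w\,\varphi^n\|_{H^2}^2
\;=\; \int_\T |w|^2 \sum_{n=0}^{\infty} |\varphi|^{2n}\, dm
\;=\; \int_\T \frac{|w|^2}{1-|\varphi|^2}\,dm.
\end{displaymath}
Substituting $|w| = 1-|\varphi|$ causes a cancellation, so the integrand reduces to $(1-|\varphi|)/(1+|\varphi|) \leq 1$, and hence $\|T\|_{HS}^2 \leq 1$. In particular $T$ is Hilbert-Schmidt.

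Preliminary justification of boundedness is essentially automatic: $w$ is bounded by $1$ on $\D$ (since $|w^\ast| \leq 1$ a.e.\ and $w\in H^\infty$), so $M_w$ is a contraction on $H^2$; and $C_\varphi$ is bounded on $H^2(\D)$ by Littlewood's subordination principle. The only step requiring care is the interchange of sum and integral, which is valid by monotone convergence on the set where $|\varphi|<1$, together with the observation that this set has full measure on $\T$. No obstacle is expected: the statement is essentially a direct calculation, and the hypothesis on $\log(1-|\varphi|)$ is used in two places (existence of the outer $w$, and the a.e.\ bound $|\varphi|<1$), both of which are standard facts from \cite{Duren}.
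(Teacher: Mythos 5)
Your proof is correct and follows essentially the same route as the paper: summing $\|T(z^n)\|^2 = \int_\T (1-|\varphi|)^2|\varphi|^{2n}\,dm$ over $n$ and simplifying the resulting integrand to $(1-|\varphi|)/(1+|\varphi|) \leq 1$. The extra remarks on $|\varphi|<1$ a.e.\ and monotone convergence are sensible justifications of the interchange that the paper leaves implicit.
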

\begin{proof}
We have:
\begin{displaymath} 
\sum_{n = 0}^\infty \| T (z^n) \|^2 = \sum_{n = 0}^\infty \int_\T (1 - |\varphi|)^2 |\varphi|^{2 n} \, dm 
= \int_\T \frac{1 - |\varphi|}{1 + |\varphi|} \, dm < + \infty \, ,
\end{displaymath} 
and $T$ is Hilbert-Schmidt, as claimed.
\end{proof}

\medskip
{\bf 2.} In \cite{Lechner-LQR}, Theorem~2.5, it is proved that we always have, for some constants $\delta, \rho > 0$:
\begin{equation} \label{mino generale}
\qquad a_{n}( M_w C_\varphi) \geq \delta \, \rho^n \, , \quad n = 1, 2,  \dots
\end{equation} 
(if $w \not\equiv 0$). We give here an alternative proof, based on a result of Gunatillake (\cite{GUN}), this result holding in a wider context.  
\begin{theorem} [Gunatillake] \label{gaj} 
Let $T = M_w C_\varphi$ be a compact weighted composition operator on $H^2$ and assume that $\varphi$ has a fixed point $a\in \D$. Then the spectrum of 
$T$ is the set:
\begin{displaymath} 
\sigma (T) =\{0, w(a), w(a) \, \varphi'(a), w(a) \, [\varphi '(a)]^2, \ldots, w(a) \, [\varphi '(a)]^n, \ldots \}
\end{displaymath} 
\end{theorem}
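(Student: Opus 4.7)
First, conjugating by $C_\psi$ where $\psi$ is a disk automorphism sending $a$ to $0$ produces a similar operator (hence with the same spectrum) of the form $M_{\tilde w}C_{\tilde\varphi}$ with $\tilde\varphi(0)=0$, $\tilde\varphi'(0)=\varphi'(a)$, $\tilde w(0)=w(a)$; so I may assume $a=0$. I also need $|\varphi'(0)|<1$: otherwise Schwarz forces $\varphi$ to be a rotation, $C_\varphi$ is then unitary, and compactness of $T$ would force $M_w$ compact, hence $w\equiv 0$, contrary to $T$ being a non-trivial compact operator. The strategy now is to combine the compactness of $T$ (so that $\sigma(T)\setminus\{0\}$ consists of eigenvalues) with a triangular structure of $T$ in a suitable basis of $H^2$.

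Setting $\lambda_k := w(0)[\varphi'(0)]^k$, the direct computation $T(z^k)=w(z)\varphi(z)^k=\lambda_k z^k+O(z^{k+1})$ shows that the matrix of $T$ in the orthonormal basis $\{z^k\}_{k\geq 0}$ is lower triangular with diagonal $(\lambda_k)$. In particular, each $V_N := z^N H^2$ is $T$-invariant, and the induced operator $\tilde T_N$ on the $N$-dimensional quotient $H^2/V_N$ has spectrum exactly $\{\lambda_0,\dots,\lambda_{N-1}\}$. I claim $\sigma(\tilde T_N)\subseteq\sigma(T)$: if $\mu\notin\sigma(T)$, then $T-\mu$ is injective, so its restriction $(T-\mu)|_{V_N}$ is injective, and since $T|_{V_N}$ is compact, Fredholm theory promotes this to a bijection of $V_N$. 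If $(\tilde T_N-\mu)[f]=0$ in the quotient, then $Tf-\mu f\in V_N$, whence $f=(T-\mu)^{-1}(Tf-\mu f)\in V_N$ by the above, so $\tilde T_N-\mu$ is injective; surjectivity is immediate from $(T-\mu)^{-1}$. Taking $N\to\infty$ yields $\{\lambda_k:k\geq 0\}\subseteq\sigma(T)$.

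Conversely, any $\lambda\in\sigma(T)\setminus\{0\}$ is an eigenvalue by compactness; if $Tf=\lambda f$ with $f\neq 0$ vanishing to order $k$ at $0$, matching the coefficients of $z^k$ in the identity $w(z)f(\varphi(z))=\lambda f(z)$ forces $\lambda=\lambda_k$. Together with $0\in\sigma(T)$ (since $T$ is compact on an infinite-dimensional Hilbert space), this yields the claimed equality. The main technical hurdle is the Fredholm step $\sigma(\tilde T_N)\subseteq\sigma(T)$, which elegantly sidesteps the much harder task of explicitly constructing $H^2$-eigenfunctions for each $\lambda_k$ via a Koenigs-type Schröder equation.
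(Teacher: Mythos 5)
The paper does not actually prove this statement: it is imported verbatim from Gunatillake's article [GUN] and used as a black box in the proof of \eqref{mino generale}. Your argument is a correct, self-contained derivation, and it follows what is essentially the classical Caughran--Schwartz scheme that Gunatillake adapts to the weighted setting: reduce to $a=0$ by conjugating with an invertible $C_\psi$, note that $V_N=z^NH^2$ is $T$-invariant because $\varphi(0)=0$, read off $\sigma(\tilde T_N)=\{\lambda_0,\dots,\lambda_{N-1}\}$ from the triangular matrix of the finite-dimensional quotient operator, transfer these points into $\sigma(T)$ via the Fredholm alternative applied to $T|_{V_N}-\mu$ (legitimate since any $\mu\notin\sigma(T)$ is automatically nonzero, $0$ lying in the spectrum of every compact operator on an infinite-dimensional space), and obtain the reverse inclusion by matching the lowest-order Taylor coefficient in $w\cdot(f\circ\varphi)=\lambda f$. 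Each step checks out, and the only compactness used is that of $T$ itself rather than of $C_\varphi$, which is exactly the right level of generality; as you say, the quotient argument neatly avoids having to produce Koenigs-type eigenfunctions inside $H^2$. Two cosmetic remarks. First, the paragraph establishing $|\varphi'(0)|<1$ is never invoked later; it only certifies that the asserted spectrum is a closed set, and in the excluded case $w\equiv 0$ the statement holds trivially, so nothing is lost. Second, in the conjugation step the bookkeeping should be arranged so that the new symbol is $\psi^{-1}\circ\varphi\circ\psi$ with $\psi(0)=a$ (or the mirror-image choice), so that it fixes $0$; this does not affect the three identities $\tilde\varphi(0)=0$, $\tilde\varphi'(0)=\varphi'(a)$, $\tilde w(0)=w(a)$ that you need.
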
 
\begin{proof} [Proof of \eqref{mino generale}] 
First observe that, in view of Proposition~\ref{evid}, $\varphi$ cannot be an automorphism of $\D$ so that the point $a$ is the Denjoy-Wolff point 
of $\varphi$ and is attractive. Theorem~\ref{gaj} is interesting only when $w(a) \, \varphi ' (a)\neq 0$. 

Now, we can give a new proof Theorem 2.5 of \cite{Lechner-LQR}  as follows. Let $a\in \D$ be such that $w(a) \, \varphi ' (a) \neq 0$ ($H(\D)$ is a division 
ring and $\varphi ' \not\equiv 0$, $w \not\equiv 0$). Let $b = \varphi (a)$ and $\tau \in {\rm Aut}\, \D$ with $\tau (b) = a$. We set:
\begin{displaymath} 
\psi = \tau \circ \varphi \quad  \text{and} \quad  S = M_w C_\psi = T C_\tau \, . 
\end{displaymath} 
This operator $S$ is compact because $T$ is. 

Since $\psi (a) = a$ and $\psi ' (a) = \tau '(b) \varphi ' (a) \neq 0$,  Theorem~\ref{gaj} says that the non-zero 
eigenvalues of $S$, arranged in non-increasing order, are the numbers $\lambda_n = w (a) \, [\psi ' (a)]^{n - 1}$,  $n\geq 1$. As a consequence of 
Weyl's inequalities, we know that:
\begin{displaymath} 
a_{1} (S) \, a_{n}(S) \geq |\lambda_{2 n}|^2 \geq \delta \, \rho^{n} \, ,
\end{displaymath} 
with:
\begin{displaymath} 
\delta = |w (a)|^{2} > 0 \quad \text{and} \quad \rho =|\psi ' (a)|^{4} > 0 \, .
\end{displaymath} 
To finish, it is enough to observe that $a_{n} (S) \leq a_{n} (T) \, \Vert C_\tau \Vert$ by the ideal property of approximation numbers.
\end{proof}
%
 
\section{The splitted case} \label{sec: splitted}

\begin{theorem}\label{treprov} 
Let $\Phi = (\phi, \psi) \colon \D^d \to \D^d$ be a truly $d$-dimensional symbol with $\phi \colon \D\to \D$ depending only on $z_1$ and 
$\psi \colon\D^{d-1}\to \D^{d - 1}$ only on $z_2,\ldots, z_d$, i.e. $\Phi (z_1, z_2, \ldots, z_d) = \big( \phi (z_1), \psi (z_2, \ldots, z_d)\big)$. Then, 
whatever $\psi$ behaves:
\begin{displaymath} 
\Vert \phi \Vert_\infty = 1 \quad \Longrightarrow \quad \beta_{d}(C_\Phi) =1 \, .
\end{displaymath} 
\end{theorem}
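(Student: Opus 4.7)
The plan is to exploit the separation of variables via a tensor product decomposition. Since the monomial orthonormal bases match up, $H^2(\D^d)$ identifies canonically with the Hilbert tensor product $H^2(\D) \otimes H^2(\D^{d-1})$, and under this identification $C_\Phi = C_\phi \otimes C_\psi$. The central tool will be the inequality
\[
a_{IJ}(C_\phi \otimes C_\psi) \geq a_I(C_\phi)\, a_J(C_\psi), \qquad I, J \geq 1 ,
\]
which I would prove via Bernstein numbers using \eqref{Bernstein} and \eqref{egalite}: pick subspaces $E_I \subset H^2(\D)$ and $F_J \subset H^2(\D^{d-1})$ of dimensions $I$ and $J$ almost extremizing $b_I(C_\phi) = a_I(C_\phi)$ and $b_J(C_\psi) = a_J(C_\psi)$, and observe that $E_I \otimes F_J \subset H^2(\D^d)$ has dimension $IJ$ with the smallest singular value of $(C_\phi \otimes C_\psi)|_{E_I \otimes F_J}$ equal to the product of the corresponding smallest singular values on the two factors.

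If $C_\Phi$ is not compact, then $a_n(C_\Phi)$ does not tend to $0$, so $[a_{n^d}(C_\Phi)]^{1/n} \to 1$ trivially; I may therefore assume that both $C_\phi$ and $C_\psi$ are compact. The hypothesis $\|\phi\|_\infty = 1$, via \eqref{equiv-dim1} reinforced by the spectral radius formula of \cite{LQR}, yields $\beta_1(C_\phi) = 1$, that is $a_n(C_\phi)^{1/n} \to 1$. Since $\Phi(\D^d) = \phi(\D) \times \psi(\D^{d-1})$ has non-void interior, so does $\psi(\D^{d-1})$, making $\psi$ truly $(d-1)$-dimensional on the polydisk $\D^{d-1}$ (a bounded symmetric domain); the lower-bound result of \cite{BLQR} (or of \cite{LIQUEROD} when $d = 2$) then yields $\beta_{d-1}^-(C_\psi) > 0$. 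A standard monotonicity argument (using $a_J(C_\psi) \geq a_{(n+1)^{d-1}}(C_\psi)$ when $n^{d-1} \leq J < (n+1)^{d-1}$) upgrades this into a pointwise bound $a_J(C_\psi) \geq c\, r^{J^{1/(d-1)}}$ valid for all $J \geq 1$, with some $c > 0$ and $r \in (0,1)$.

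For the decisive estimate, fix an arbitrary $A > 0$ and set $I_n = \lceil A n \rceil$, $J_n = \lceil n^{d-1}/A \rceil$, so that $I_n J_n \geq n^d$. The key inequality combined with the monotonicity of approximation numbers in their index gives
\[
[a_{n^d}(C_\Phi)]^{1/n} \geq [a_{I_n}(C_\phi)]^{1/n} \cdot [a_{J_n}(C_\psi)]^{1/n} .
\]
The first factor equals $\bigl[a_{I_n}(C_\phi)^{1/I_n}\bigr]^{I_n/n}$, and since $a_{I_n}(C_\phi)^{1/I_n} \to 1$ while $I_n/n \to A$, it tends to $1^A = 1$; similarly $J_n^{1/(d-1)}/n \to A^{-1/(d-1)}$, so the pointwise bound on $a_J(C_\psi)$ gives $\liminf_n [a_{J_n}(C_\psi)]^{1/n} \geq r^{A^{-1/(d-1)}}$. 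Hence $\beta_d^-(C_\Phi) \geq r^{A^{-1/(d-1)}}$ for every $A > 0$, and letting $A \to \infty$ forces $\beta_d^-(C_\Phi) \geq 1$; together with the trivial $\beta_d^+(C_\Phi) \leq 1$ (from $a_n \leq \|C_\Phi\|$), this gives $\beta_d(C_\Phi) = 1$. The main subtlety will be reconciling the two different scalings --- subexponential decay of $-\log a_n(C_\phi)$ on the $\phi$-side, but only $-\log a_J(C_\psi) = O(J^{1/(d-1)})$ on the $\psi$-side from \cite{BLQR} --- and the parameter $A$ together with the limit $A \to \infty$ is what provides exactly the right amount of slack to dilute the weaker bound and recover $\beta_d = 1$.
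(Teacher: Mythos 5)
Your proposal is correct and follows essentially the same route as the paper: the tensor decomposition $C_\Phi = C_\phi \otimes C_\psi$, the key inequality $a_{IJ}(S\otimes T)\ge a_I(S)\,a_J(T)$, the one-variable consequence of \eqref{equiv-dim1} that $[a_m(C_\phi)]^{1/m}\to 1$, and the lower bound $a_J(C_\psi)\gtrsim \e^{-C J^{1/(d-1)}}$ from \cite{BLQR}. The only differences are cosmetic: you prove the tensor lemma via Bernstein numbers rather than the Schmidt decomposition, and your free parameter $A\to\infty$ replaces (and arguably streamlines) the paper's specific choice $N_m = m\,[m\eps_m]^{d-1}$ together with its interpolation step $N_m/N_{m-1}\to 1$.
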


\begin{proof} 
The proof is based on the following simple lemma, certainly well-known. 
\begin{lemma}\label{alegro} 
Let $S \colon H_1\to H_1$ and $T\colon H_2\to H_2$ be two compact linear operators, where $H_1$ and $H_2$ are Hilbert spaces. Let $S\otimes T$ be 
their tensor product, acting on the tensor product $H_1\otimes H_2$. Then:
\begin{displaymath} 
a_{m n}(S\otimes T)\geq a_{m}(S) \, a_{n}(T)
\end{displaymath} 
for all positive integers $m, n$.
\end{lemma}

We postpone the proof of the lemma and show how to conclude. \smallskip

We can assume $C_\Phi$ to be compact, so that $C_{\phi}$ is compact as well.  Since $\Vert \phi \Vert_\infty = 1$, we have, thanks to 
\eqref{equiv-dim1}\,:
\begin{displaymath} 
\qquad a_{m} (C_{\phi}) \geq \e^{- m \, \eps_m} \quad \text{with} \quad \eps_m \converge_{m \to \infty} 0 \, .
\end{displaymath} 
Replacing $\eps_m$ by $\delta_m:= \sup_{p\geq m} \eps_p$, we can assume that $(\eps_m)_m$ is non-increasing. Moreover,  
\begin{displaymath} 
m \, \eps_m\to \infty 
\end{displaymath} 
since $C_{\phi}$ is compact and hence $a_{m} (C_{\phi}) \converge_{m \to \infty} 0$.  We next observe that, due to the separation of variables in the 
definition of $\phi$ and $\psi$, we can write:
\begin{equation}\label{proten} 
C_\Phi = C_{\phi}\otimes C_\psi \, .
\end{equation}
Indeed, write $z = (z_1, w)$ with $z_1 \in \D$ and $w \in \D^{d-1}$. If $f \in H^{2}(\D)$ and $g\in H^{2}(\D^{d-1})$, we see that:
\begin{align*}
C_{\Phi} (f\otimes g) (z) 
& = (f\otimes g) \big( \phi (z_1), \psi (w) \big) = f \big( \phi (z_1) \big)\, g \big( \psi(w) \big) \\
& = [C_{\phi} f (z_1)] \,  [C_{\psi} g (w)] = (C_{\phi} f \otimes C_{\psi}g) (z) \, .
\end{align*}
Since tensor products $f \otimes g$ generate $H^{2}(\D^d)=H^2 (\D) \otimes H^2 (D^{d - 1})$, this proves \eqref{proten}. 
\par\smallskip

Let now $m$ be a large positive integer. Set ($[\, . \, ]$ stands for the integer part):
\begin{equation} 
n_m = [m\eps_m]^{d-1} \quad \text{and} \quad N_m =m \, n_m \, .
\end{equation} 
From what we know in dimension  $d - 1$ (see \cite{BLQR}, Theorem~3.1) and from the preceding, we can write (observe that $\psi$ has to be truly 
$(d-1)$-dimensional since $\Phi$ is truly $d$-dimensional):
\begin{displaymath} 
a_{m} (C_{\phi})\geq \exp(- m\, \eps_m) \quad \text{and}\quad  a_{n} (C_{\psi})\geq a \, \exp (- C \, n^{1/(d-1)}) \, ,
\end{displaymath} 
for some positive constant $C$, which will be allowed to vary from one formula to another. Lemma~\ref{alegro} implies: 
\begin{displaymath} 
a_{N_m} (C_\Phi)\geq a \, \exp [- C \, (m \, \eps_m + {n_m}^{1/(d-1)}) ] \, .
\end{displaymath} 
Since $n_m \lesssim (m \eps_m)^{d - 1}$, we get:
\begin{displaymath} 
a_{N_m} (C_\Phi)\geq a \, \exp(- C \, m \, \eps_m) \, .
\end{displaymath} 
Observe that $N_m = m \, n_m \sim m^d {\eps_m}^{d-1}$ and so ${N_m}^{1/d} \sim m \, {\eps_m}^{1 - 1/ d}$. 
As a consequence:
\begin{align*} 
a_{N_m} (C_\Phi) \geq a \, \exp (- C \, m\,\eps_m ) 
& = a \, \exp \big[- (C \, {\eps_m}^{1 / d})\, (m \, {\eps_m}^{1- 1 /d})\big]  \\
& \geq a \, \exp (-\eta_{m}\, {N_m}^{1/d} )
\end{align*} 
with $\eta_m: =C \,{\eps_m}^{1/d}$. \smallskip

Now, for $N > N_1$, let $m$ be the smallest integer satisfying  $N_m \geq N$ (so that $N_{m - 1}< N \leq N_{m}$), and set  $\delta_N =\eta_m$.  
We have  $\lim_{N\to \infty} \delta_N =0$. Next, we note that $\lim_{m\to \infty} N_{m} / N_{m -1} = 1$, because $N_m \geq N_{m - 1}$ and:
\begin{displaymath} 
\frac{N_{m}}{N_{m-1}} \leq \frac{m}{m - 1} \, \bigg( \frac{m \, \eps_m + 1}{(m - 1)\, \eps_{m - 1}}\bigg)^{d - 1} 
\sim \bigg(\frac{\eps_{m}}{\eps_{m-1}}\bigg)^{d-1} \leq 1 \, . 
\end{displaymath} 

Finally, if $N$ is an arbitrary integer and $N_{m-1} < N \leq N_m$, we obtain:
\begin{displaymath} 
a_{N} (C_\Phi)\geq a_{N_m} (C_\Phi) \geq a \, \exp (-\eta_m \, {N_m}^{1/d}) \geq a \, \exp (- C \, \delta_N N^{1/d}) \, ,
\end{displaymath} 
since we observed that $\lim_{m\to \infty} N_{m} / N_{m-1}=1$. 
\smallskip

This amounts to say that  $\beta_{d}(C_\Phi)=1$.
\end{proof}
\begin{proof}[Proof of Lemma~\ref{alegro}.]
It is rather formal. Start from the Schmidt decompositions of $S$ and $T$ respectively (recall that Hilbert spaces, the approximation numbers are equal to the 
singular ones):
\begin{displaymath} 
S =\sum_{m=1}^\infty a_{m}(S) \, u_m \odot v_m \, ,\qquad T=\sum_{n=1}^\infty a_{n}(T) \, u'_n \odot v'_n \, ,
\end{displaymath} 
where $(u_m)$, $(v_m)$ are two orthonormal sequences of $H_1$,  $(u'_n)$, $(v'_n)$ two orthonormal sequences of $H_2$, and 
$u_m\odot v_m$ and $u'_n \odot v'_n$ denote the rank one operators defined by $(u_m \odot v_m) (x)=\langle x, v_m\rangle \, u_m$, $x \in H_1$,  
and $(u'_n \odot v'_n) (x) = \langle x, v'_n \rangle \, u'_n$, $x \in H_2$. 

We clearly have:
\begin{displaymath} 
(u_m \odot v_m) \otimes (u'_n \odot v'_n) = (u_m\otimes u'_n) \odot (v_m\otimes v'_n) \, ,
\end{displaymath} 
so that the Schmidt decomposition of $S\otimes T$ is (with SOT-convergence):
\begin{displaymath} 
S\otimes T = \sum_{m, n \geq 1} a_{m}(S) \, a_{n}(T) \, (u_m \otimes u'_n) \odot (v_m \otimes v'_n) \,,
\end{displaymath} 
since the two sequences $(u_m\otimes u'_n)_{m, n}$ and $(v_m\otimes v'_n)_{m, n}$ are orthonormal: for instance, we have by definition:
\begin{displaymath} 
\langle u_{m_1} \otimes u'_{n_1}, u_{m_2}\otimes u'_{n_2}\rangle = \langle u_{m_1}, u_{m_2},\rangle\, \langle u'_{n_1}, u'_{n_2}\rangle.
\end{displaymath} 
This shows that the singular values of $S\otimes T$ are the non-increasing rearrangement of the positive numbers $a_{m}(S) \, a_{n}(T)$ and ends the proof of 
the lemma: the $mn$ numbers $a_{k}(S)\, a_{l}(T)$, for $1\leq k\leq m$, $1\leq l\leq n$ all satisfy $a_{k}(S)\, a_{l}(T)\geq  a_{m}(S) \, a_{n}(T)$, 
so that $a_{mn}(S\otimes T)\geq a_{m}(S) \, a_{n}(T)$.
\end{proof}

\section{The glued case} \label{sec: glued}

Here  we consider symbols of the form:
\begin{equation} \label{glued map}
\Phi (z_1, z_2) = \big( \phi (z_1), \phi (z_1)\big) \, ,
\end{equation} 
where $\phi \colon \D \to \D$ is a non-constant analytic map. 
\smallskip

Note that such maps $\Phi$ are not truly $2$-dimensional.

\subsection{Preliminary} 

We begin by remarking the following fact.
\smallskip

Let $B^2 (\D)$ be the Bergman space of all analytic functions $f \colon \D \to \C$ such that:
\begin{displaymath} 
\| f \|_{B^2}^2 := \int_\D |f (z)|^2 \, dA (z) < \infty \, ,
\end{displaymath} 
where $dA$ is the normalized area measure on $\D$. 

\begin{proposition} \label{B2-H2}
Assume that the composition operator $C_\phi$ maps boundedly $B^2 (\D)$ into $H^2 (\D)$. Then $C_\Phi \colon H^2 (\D^2) \to H^2 (\D^2)$, 
defined by \eqref{glued map}, is bounded.
\end{proposition}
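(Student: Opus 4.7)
The plan is to reduce the boundedness of $C_\Phi$ on $H^2(\D^2)$ to the boundedness of $C_\phi \colon B^2(\D) \to H^2(\D)$ via the following observation: when one composes with a ``diagonal'' symbol of the form \eqref{glued map}, the image function $C_\Phi f$ depends only on one variable, and that one-variable function is $F\circ\phi$ where $F$ is the diagonal restriction of $f$. A classical fact (essentially due to a Cauchy-Schwarz estimate on the Taylor coefficients) is that the diagonal restriction sends $H^2(\D^2)$ continuously into $B^2(\D)$; this is precisely the ingredient needed.

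First I would expand $f\in H^2(\D^2)$ as
\begin{displaymath}
f(w_1,w_2) = \sum_{j,k\geq 0} a_{jk}\, w_1^j w_2^k,\qquad \|f\|_{H^2(\D^2)}^2 = \sum_{j,k\geq 0}|a_{jk}|^2,
\end{displaymath}
and set $c_n = \sum_{j+k = n} a_{jk}$, so that the one-variable function
\begin{displaymath}
F(w) := \sum_{n\geq 0} c_n\, w^n
\end{displaymath}
satisfies $f(w,w) = F(w)$. Since $\Phi(z_1,z_2) = (\phi(z_1),\phi(z_1))$,
\begin{displaymath}
(C_\Phi f)(z_1,z_2) = f\big(\phi(z_1),\phi(z_1)\big) = F\big(\phi(z_1)\big),
\end{displaymath}
which depends only on $z_1$, so that $\|C_\Phi f\|_{H^2(\D^2)} = \|F\circ\phi\|_{H^2(\D)}$.

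Second, I would estimate $\|F\|_{B^2}$. Using the identity $\|F\|_{B^2}^2 = \sum_{n\geq 0} |c_n|^2/(n+1)$ and the Cauchy-Schwarz inequality on the $n+1$ terms in the sum defining $c_n$,
\begin{displaymath}
|c_n|^2 = \bigg|\sum_{j+k=n} a_{jk}\bigg|^2 \leq (n+1)\sum_{j+k=n}|a_{jk}|^2,
\end{displaymath}
which gives
\begin{displaymath}
\|F\|_{B^2}^2 \leq \sum_{n\geq 0}\sum_{j+k=n}|a_{jk}|^2 = \|f\|_{H^2(\D^2)}^2.
\end{displaymath}

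Finally, combining these two steps with the hypothesis that $C_\phi\colon B^2(\D)\to H^2(\D)$ is bounded:
\begin{displaymath}
\|C_\Phi f\|_{H^2(\D^2)} = \|F\circ\phi\|_{H^2(\D)} \leq \|C_\phi\|_{B^2\to H^2}\,\|F\|_{B^2} \leq \|C_\phi\|_{B^2\to H^2}\,\|f\|_{H^2(\D^2)}.
\end{displaymath}
Everything is elementary; there is no real obstacle. The only point worth flagging is the Cauchy-Schwarz step, which is the reason the Bergman norm appears rather than the Hardy norm—the weight $1/(n+1)$ exactly compensates for the $n+1$ terms in the diagonal sum defining $c_n$, so the $H^2(\D^2)$-norm of $f$ controls only the $B^2(\D)$-norm of $F$, not its $H^2(\D)$-norm.
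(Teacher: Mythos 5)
Your proof is correct and follows essentially the same route as the paper's: you form the diagonal restriction $F$ (the paper's $g$), note that $C_\Phi f = (C_\phi F)(z_1)$ depends only on $z_1$, and use Cauchy--Schwarz to show the weight $1/(n+1)$ in the Bergman norm absorbs the $n+1$ terms in the diagonal sum, so that $\|F\|_{B^2}\leq\|f\|_{H^2(\D^2)}$. The only cosmetic difference is that the paper briefly flags that the term-by-term manipulation should first be carried out for polynomials (or formally) and then extended by density, a point you could mention but which is routine.
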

\begin{proof}
If we write $f \in H^2 (\D^2)$ as:
\begin{displaymath} 
f (z_1, z_2) = \sum_{j, k \geq 0} c_{j, k} z_1^j z_2^k \, , \quad \text{with} \quad \sum_{j, k \geq 0} |c_{j, k}|^2 = \| f \|_{H^2}^2 \, ,
\end{displaymath} 
we formally (or assuming that $f$ is a polynomial) have:
\begin{displaymath} 
[C_{\Phi} f ] (z_1, z_2) = \sum_{j, k \geq 0} c_{j, k}[\phi (z_1)]^j [\phi (z_1)]^k 
= \sum_{n = 0}^\infty \bigg(\sum_{j + k =n} c_{j, k} \bigg) [\phi (z_1)]^n \, .
\end{displaymath} 
Hence, if we set $g (z) = \sum_{n = 0}^\infty \big( \sum_{j + k = n} c_{j, k} \big) z^n$, we get:
\begin{displaymath} 
[C_{\Phi} (f ) ] (z_1, z_2) = [C_\phi (g)] (z_1) \, , 
\end{displaymath} 
so that, by integrating:
\begin{displaymath} 
\| C_\Phi (f) \|_{H^2 (\D^2)} = \| C_\phi (g) \|_{H^2 (\D)} \, .
\end{displaymath} 
By hypothesis, there is a positive constant $M$ such that:
\begin{displaymath} 
\| C_\phi (g) \|_{H^2 (\D)} \leq M \, \| g \|_{B^2 (\D)} \, . 
\end{displaymath} 
But, by the Cauchy-Schwarz inequality: 
\begin{align*} 
\| g \|_{B^2 (\D)}^2 
& = \sum_{n = 0}^\infty \frac{1}{n + 1}\, \bigg| \sum_{j + k= n} c_{j, k} \bigg|^2 \\
& \leq \sum_{n = 0}^\infty \bigg( \sum_{j + k = n} |c_{j, k}|^2 \bigg) = \sum_{j, k \geq 0} |c_{j, k}|^2 = \| f \|_{H^2 (\D^2)}^2 \, ,
\end{align*} 
and we obtain $\| C_\Phi (f) \|_{H^2 (\D^2)} \leq M \, \| f \|_{H^2 (\D^2)}$. 
\end{proof}

\subsection{Lens maps}

Let $\lambda_\theta$ be a lens map of parameter $\theta$, $0 < \theta < 1$. 
We consider $\Phi_\theta \colon \D^2 \to \D^2$ defined by:
\begin{equation} 
\Phi_\theta (z_1, z_2) = \big( \lambda_\theta (z_1), \lambda_\theta (z_1) \big) \, .
\end{equation} 

We have the following result.
\begin{theorem} \label{bi-lens}
The composition operator $C_{\Phi_\theta} \colon H^2 (\D^2) \to H^2 (\D^2)$ is: 
\par\smallskip

$1)$ not bounded for $\theta > 1/2$; 
\par\smallskip

$2)$ bounded, but not compact for $\theta = 1/2$;
\par\smallskip

$3)$ compact, and even Hilbert-Schmidt, for $0 < \theta < 1/2$.
\end{theorem}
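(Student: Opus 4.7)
The plan is to reduce all three statements to questions about the single-variable weighted-type operator $C_{\lambda_\theta}\colon B^2(\D)\to H^2(\D)$, and then use the well-understood Carleson-measure characterization on lens maps. The proof of Proposition~\ref{B2-H2} already exhibits the factorization $C_\Phi=i\circ S\circ T$, where $T\colon H^2(\D^2)\to B^2(\D)$ is the surjection sending $\sum c_{j,k}z_1^j z_2^k$ to $g(z)=\sum_n\bigl(\sum_{j+k=n}c_{j,k}\bigr)z^n$, the operator $S=C_{\lambda_\theta}$ acts between $B^2(\D)$ and $H^2(\D)$, and $i$ is the isometric embedding of $H^2(\D)$ into $H^2(\D^2)$ as functions of $z_1$ alone. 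The first step in the proof will be to write down the right inverse $R\colon B^2(\D)\to H^2(\D^2)$ by
$$Rg=\sum_{n\geq 0}\frac{a_n}{n+1}\sum_{j+k=n}z_1^j z_2^k,\qquad g(z)=\sum a_n z^n,$$
and to check via the coefficient computation in Proposition~\ref{B2-H2} that $R$ is an isometry with $TR=\mathrm{id}_{B^2}$. Then $C_\Phi\circ R=i\circ S$, so by the ideal property of compact operators $C_\Phi$ is bounded (resp.\ compact) on $H^2(\D^2)$ if and only if $S\colon B^2(\D)\to H^2(\D)$ is.

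The second step will be the Carleson-measure analysis of $S$. Since $\|Sf\|_{H^2}^2=\int_{\overline{\D}}|f|^2\,dm_{\lambda_\theta}$, $S$ is bounded (resp.\ compact) iff $m_{\lambda_\theta}$ is a Carleson measure (resp.\ vanishing Carleson measure) for $B^2(\D)$, which amounts to $m_{\lambda_\theta}\bigl(S(\xi,h)\bigr)=O(h^2)$ (resp.\ $o(h^2)$) uniformly in $\xi\in\T$. The standard lens-map estimate $|1-\lambda_\theta(e^{i\alpha})|\asymp|\alpha|^\theta$ near $\alpha=0$, together with $\lambda_\theta(\D)$ touching $\T$ only at $1$, gives
$$m_{\lambda_\theta}\bigl(S(1,h)\bigr)\asymp h^{1/\theta}\qquad (h\to 0^+),$$
and $m_{\lambda_\theta}\bigl(S(\xi,h)\bigr)=0$ for $\xi$ bounded away from $1$ and $h$ small. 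Thus the Carleson condition becomes $\theta\leq 1/2$ and the vanishing Carleson condition becomes $\theta<1/2$. Combined with the reduction, this immediately yields parts $(1)$ (unbounded for $\theta>1/2$) and $(2)$ (bounded and not compact at the critical value $\theta=1/2$).

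For part $(3)$ the most direct route is a computation of the Hilbert--Schmidt norm with respect to the canonical basis $\{z_1^j z_2^k\}$ of $H^2(\D^2)$. Since $C_\Phi(z_1^j z_2^k)=\lambda_\theta^{j+k}$ viewed as a function of $z_1$ only, and the embedding $H^2(\D)\hookrightarrow H^2(\D^2)$ is isometric,
$$\|C_\Phi\|_{HS}^2=\sum_{j,k\geq 0}\|\lambda_\theta^{j+k}\|_{H^2(\D)}^2=\sum_{n\geq 0}(n+1)\,\|\lambda_\theta^n\|_{H^2(\D)}^2.$$
A Laplace-type estimate using $|\lambda_\theta(e^{i\alpha})|^2\asymp 1-c|\alpha|^\theta$ near $\alpha=0$ (obtained from the Cayley-transform description of $\lambda_\theta$) gives $\|\lambda_\theta^n\|_{H^2}^2\asymp n^{-1/\theta}$, so the series is comparable to $\sum_n(n+1)\,n^{-1/\theta}$, which converges exactly when $1/\theta>2$, i.e.\ $\theta<1/2$. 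The main technical point throughout the argument is precisely these two-sided asymptotics for $m_{\lambda_\theta}\bigl(S(1,h)\bigr)$ and for $\|\lambda_\theta^n\|_{H^2}^2$; the upper bounds suffice for the positive statements in $(2)$ and $(3)$, while the matching lower bounds are needed to rule out boundedness in $(1)$ and compactness at $\theta=1/2$. These are standard but careful consequences of the explicit Cayley-transform formula for $\lambda_\theta$ already recorded in the literature cited in the paper.
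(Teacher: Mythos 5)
Your proof is correct, and for the positive halves of the statement it coincides with the paper's: boundedness for $\theta\le 1/2$ is obtained exactly as in the text, by combining Proposition~\ref{B2-H2} with the fact that $m_{\lambda_\theta}$ is a $2$-Carleson measure (i.e.\ $m_{\lambda_\theta}(S(1,h))\approx h^{1/\theta}$), and your Hilbert--Schmidt computation for $\theta<1/2$ is the same basis computation $\sum_n(n+1)\,\|\lambda_\theta^n\|_{H^2}^2$ as in the paper (you sharpen the paper's upper bound to the two-sided asymptotic $\|\lambda_\theta^n\|_{H^2}^2\approx n^{-1/\theta}$, which is more than is needed). Where you genuinely diverge is in the negative assertions. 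The paper proves unboundedness for $\theta>1/2$ and non-compactness at $\theta=1/2$ by testing $C_{\Phi_\theta}^{\,*}$ on the reproducing kernels $K_{a,0}$ of $H^2(\D^2)$ and using $1-|\lambda_\theta(a)|\sim(1-a)^\theta$. You instead upgrade the one-directional Proposition~\ref{B2-H2} to a two-sided equivalence by exhibiting the isometric right inverse $R$ of the averaging map $T$ (your formula does give $TR=\mathrm{id}$ and $\|Rg\|_{H^2(\D^2)}=\|g\|_{B^2}$), so that $C_{\Phi_\theta}\circ R=i\circ C_{\lambda_\theta}$ and hence $C_{\Phi_\theta}$ is bounded (resp.\ compact) on $H^2(\D^2)$ if and only if $C_{\lambda_\theta}\colon B^2(\D)\to H^2(\D)$ is; the negative statements then follow from the necessity half of the (vanishing) Carleson embedding theorem for $B^2$. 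This reduction is clean and unifies all three parts around the single operator $B^2\to H^2$; its price is that you must invoke (or prove, by testing on normalized Bergman reproducing kernels $(1-\bar a z)^{-2}$) the necessity direction of the Carleson characterization and its little-o compactness analogue, which the paper avoids by working with test functions directly in $H^2(\D^2)$ --- morally the same computation in a different dress. Two minor points: $\lambda_\theta$ is odd, so it also touches $\T$ at $-1$ and the supremum over Carleson windows picks up an identical contribution at $\xi=-1$ (this changes nothing quantitatively); and in part $3)$ the contribution of $|t|$ near $\pi$ to $\|\lambda_\theta^n\|_{H^2}^2$ must likewise be accounted for by this symmetry.
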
 
\begin{proof}
The reproducing kernel of $H^2 (\D^2)$ is, for $(a, b) \in \D^2$:
\begin{equation} 
\qquad \qquad \qquad K_{a, b} (z_1, z_2) = \frac{1}{1 - \bar{a} z_1} \, \frac{1}{1 - \bar{b} z_2} \, \raise 1,5 pt \hbox{,} \qquad (z_1, z_2) \in \D^2 \, ,
\quad\quad
\end{equation} 
and:
\begin{displaymath} 
\| K_{a, b} \|^2 = \frac{1}{(1 - |a|^2) (1 - |b|^2)} \, \cdot 
\end{displaymath} 

$1)$ If $C_{\Phi_\theta}$ were bounded, we should have, for some $M < \infty$:
\begin{displaymath} 
\qquad \qquad \| C_{\Phi_\theta}^{\, \ast} (K_{a, b} ) \|_{H^2} \leq M \, \| K_{a, b} \|_{H^2} \, , \quad \text{for all } a, b \in \D \, .
\end{displaymath} 
Since $C_{\Phi_\theta}^{\, \ast} (K_{a, b} ) = K_{\Phi_\theta (a, b)} = K_{\lambda_\theta (a), \lambda_\theta (a)}$, we would get, with $b = 0$:
\begin{displaymath} 
\bigg(\frac{1}{1 - |\lambda_\theta (a)|^2}\bigg)^2 \leq M^2 \, \frac{1}{1 - |a|^2} \, ;
\end{displaymath} 
but this is not possible for $\theta > 1/2$, since $1 - |\lambda_\theta (a)|^2 \approx 1 - |\lambda_\theta (a)| \sim (1 - a)^\theta$ when $a$ goes to $1$, 
with  $0 < a < 1$. 
\par\smallskip

For $2)$ and $3)$, let us consider the pull-back measure $m_\theta$ of the normalized Lebesgue measure on $\T = \partial \D$ by $\lambda_\theta$. It is easy  
to see that:
\begin{equation} 
\sup_{\xi \in \T} m_\theta [ D (\xi, h) \cap \D)] = m_\theta [D (1, h) \cap \D] \approx h^{1 /\theta} \, .
\end{equation} 
In particular, for $\theta \leq 1/2$, $m_\theta$ is a $2$-Carleson measure, and hence (see \cite{LELIQR-TAMS}, Theorem~2.1, for example) the canonical 
injection $j \colon B^2 (\D) \to L^2 (m_\theta)$ is bounded, meaning that, for some positive constant $M < \infty$:
\begin{displaymath} 
\int_\D |f (z)|^2 \, d m_\theta (z) \leq M^2 \| f \|_{B^2}^2 \, .
\end{displaymath} 
Since
\begin{displaymath} 
\int_\D |f (z)|^2 \, dm_\theta (z) = \int_\T | f [\lambda_\theta (u)]|^2 \, dm (u) = \| C_{\lambda_\theta} (f) \|_{H^2}^2 \, ,
\end{displaymath} 
we get that $C_{\lambda_\theta}$ maps boundedly $B^2 (\D)$ into $H^2 (\D)$.\par
\smallskip

It follows from Proposition~\ref{B2-H2} that $C_{\Phi_\theta} \colon H^2 (\D^2) \to H^2 (\D^2)$ is bounded. 
\par\smallskip

However, $C_{\Phi_{1/2}}$ is not compact since $C_{\Phi_{1/2}}^{\, \ast} (K_{a, b}) / \| K_{a, b} \|$ does not converge to $0$ as 
$a , b \to 1$, by the calculations made in $1)$.\par
\smallskip

For $3)$, let $e_{j, k} (z_1, z_2) = z_1^j z_2^k$, $j, k \geq 0$, be the canonical orthonormal basis of $H^2 (\D^2)$; we have 
$[C_{\phi_\theta} (e_{j, k})] (z_1, z_2)  = [\lambda_\theta (z_1)]^{j + k}$. Hence:
\begin{displaymath} 
\sum_{j, k \geq 0} \| C_{\phi_\theta} (e_{j, k}) \|_{H^2 (\D^2)}^2 \leq \sum_{n = 0}^\infty (2 n + 1) \int_\T |\lambda_\theta|^{2 n} \, dm 
\leq \int_\T \frac{2}{(1 - |\lambda_\theta|^2 )^2} \, dm \, .
\end{displaymath} 
Since, by Lemma~\ref{estim-lens} below, $1 - |\lambda_\theta (\e^{it })|^2 \gtrsim |1 - \e^{i t}|^\theta \geq t^\theta$ for $| t | \leq \pi/2$, we get:
\begin{displaymath} 
\sum_{j, k \geq 0} \| C_{\phi_\theta} (e_{j, k}) \|_{H^2 (\D^2)}^2 \lesssim \int_0^{\pi / 2} \frac{dt}{t^{2 \theta}} < \infty,
\end{displaymath} 
since $\theta < 1/2$. Therefore $C_{\phi_\theta}$ is Hilbert-Schmidt for $\theta < 1/2$. 
\end{proof}

For sake of completeness, we recall the following elementary fact (see \cite{Shapiro-livre}, p.~28, or also \cite{LELIQR}, Lemma~2.5)).
\begin{lemma} \label{estim-lens}
With $\delta = \cos (\theta \pi / 2)$, we have, for $| z | \leq 1$ and $\Re z \geq 0$:
\begin{displaymath} 
1 - |\lambda_\theta (z)|^2 \geq \frac{\delta}{2}\, |1 - z|^\theta \, .
\end{displaymath} 
\end{lemma}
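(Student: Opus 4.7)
The plan is to reduce the inequality to a computation in the right half-plane via the Cayley transform, exploiting the fact that $\lambda_\theta$ is conjugate to the power map $u \mapsto u^\theta$.

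Let $w = \frac{1+z}{1-z}$ be the Cayley transform, which sends $\D$ biholomorphically onto the right half-plane $\{\Re w > 0\}$. From the definition of $\lambda_\theta$, a direct calculation gives
\[
\lambda_\theta(z) \;=\; \frac{w^\theta - 1}{w^\theta + 1}\, .
\]
Then I would compute
\[
1 - |\lambda_\theta(z)|^2 \;=\; \frac{|w^\theta+1|^2 - |w^\theta-1|^2}{|w^\theta+1|^2} \;=\; \frac{4\,\Re(w^\theta)}{|w^\theta+1|^2} \, .
\]
This reduces the problem to bounding $\Re(w^\theta)$ from below and $|w^\theta+1|$ from above.

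The second key step is to use the assumption $\Re z \geq 0$. Since $|w|^2 = \frac{1 + 2\Re z + |z|^2}{1 - 2\Re z + |z|^2}$, the condition $\Re z \geq 0$ forces $|w| \geq 1$, and the image of $\D$ under the Cayley transform already gives $\Re w \geq 0$, i.e.\ $|\arg w| \leq \pi/2$. Hence $|\arg(w^\theta)| \leq \theta \pi / 2$ and
\[
\Re(w^\theta) \;=\; |w|^\theta \cos(\arg w^\theta) \;\geq\; \delta\, |w|^\theta,
\]
with $\delta = \cos(\theta\pi/2)$. Meanwhile $|w^\theta + 1| \leq |w|^\theta + 1 \leq 2|w|^\theta$, because $|w| \geq 1$, so $|w^\theta + 1|^2 \leq 4|w|^{2\theta}$.

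Plugging these bounds into the formula above yields
\[
1 - |\lambda_\theta(z)|^2 \;\geq\; \frac{4\delta\,|w|^\theta}{4\,|w|^{2\theta}} \;=\; \frac{\delta}{|w|^\theta} \;=\; \delta\, \frac{|1-z|^\theta}{|1+z|^\theta}\, .
\]
Finally, $|1+z| \leq 1 + |z| \leq 2$, so $|1+z|^\theta \leq 2^\theta \leq 2$, and the desired inequality $1 - |\lambda_\theta(z)|^2 \geq \tfrac{\delta}{2}|1-z|^\theta$ follows. No serious obstacle arises; the only subtleties are checking that the Cayley transform really does map the half-disk $\{|z|\leq 1,\ \Re z \geq 0\}$ into $\{|w|\geq 1,\ \Re w \geq 0\}$ (so the power $w^\theta$ stays in the sector $|\arg\cdot|\leq \theta\pi/2$) and keeping track of the constants so that the clean factor $\delta/2$ emerges.
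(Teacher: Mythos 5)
Your proof is correct and follows essentially the same route as the paper's: both pass to the half-plane variable, write $1-|\lambda_\theta(z)|^2$ as $4\,\Re(\cdot)/|1+\cdot|^2$, and use the sector bound $\Re(u)\geq \cos(\theta\pi/2)\,|u|$ for $|\arg u|\leq \theta\pi/2$ together with $|1+z|\leq 2$. The only cosmetic difference is that you work with $w=(1+z)/(1-z)$ where the paper uses its reciprocal power $\big((1-z)/(1+z)\big)^\theta$ directly.
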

\begin{proof}
We can write:
\begin{displaymath} 
\lambda_\theta (z)=\frac{1 - w}{1 + w} \quad \text{with} \quad  w=\bigg(\frac{1 - z}{1 + z}\bigg)^\theta \quad \text{and } |w|\leq 1 \, . 
\end{displaymath} 
Then:
\begin{displaymath} 
\Re w \geq \delta \, |w|\geq \frac{\delta}{2}|1 - z|^\theta \, .
\end{displaymath} 
Hence:
\begin{displaymath} 
1- |\lambda_\theta (z)|^2 = \frac{4 \, \Re w}{|1 + w|^2} \geq \delta \, |w|\geq \frac{\delta}{2} \, |1 - z|^\theta \, ,
\end{displaymath} 
as announced
\end{proof}
\medskip

We now improve the result $3)$ of Theorem~\ref{bi-lens} by estimating the approximation numbers of $C_{\Phi_\theta}$ and get that 
$C_{\Phi_\theta}$ is in all Schatten classes of $H^2 (\D^2)$ when $\theta < 1/2$.
\begin{theorem} 
For $0 < \theta < 1/2$, there exists $b = b_\theta > 0$ such that:
\begin{equation} 
a_n (C_{\Phi_\theta}) \lesssim \e^{- b \sqrt{n}} \, .
\end{equation} 
In particular $\beta_2^+ (C_{\Phi_\theta}) \leq \e^{- b} < 1$, though $\| \Phi_\theta \|_\infty = 1$, and even 
$\Phi_\theta (\T^2) \cap \T^2 \neq \emptyset$.
\end{theorem}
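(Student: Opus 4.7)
The plan is to factorize $C_{\Phi_\theta}$ twice, so as to reduce the two-variable estimate to the known sharp asymptotic $a_n(C_{\lambda_\alpha})\lesssim \e^{-c_\alpha\sqrt{n}}$ for a one-variable lens-map composition operator on $H^2(\D)$. First, let $T\colon H^2(\D^2)\to B^2(\D)$ be the diagonal restriction $(Tf)(z)=f(z,z)$. The computation already carried out in the proof of Proposition~\ref{B2-H2} gives $\|T\|\le 1$; moreover, for $g=\sum_{n\ge 0} b_n z^n\in B^2(\D)$ the preimage $f_g(z_1,z_2)=\sum_{j,k\ge 0}\frac{b_{j+k}}{j+k+1}z_1^j z_2^k$ satisfies $Tf_g=g$ and $\|f_g\|_{H^2(\D^2)}=\|g\|_{B^2}$, so that $T$ is a coisometry with isometric section $T^{\ast}$. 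Since $(C_{\Phi_\theta}f)(z_1,z_2)=(Tf)\bigl(\lambda_\theta(z_1)\bigr)$ depends only on $z_1$, one has $C_{\Phi_\theta}=I\circ V\circ T$, with $V=C_{\lambda_\theta}\colon B^2(\D)\to H^2(\D)$ bounded (by the Carleson-embedding argument used in the proof of Theorem~\ref{bi-lens}(2), since for $\theta<1/2$ the pullback $m_\theta$ is an $s$-Carleson measure with $s=1/\theta>2$) and $I\colon H^2(\D)\hookrightarrow H^2(\D^2)$ the canonical isometric inclusion of functions of $z_1$ alone. The ideal property of approximation numbers combined with $\|T\|=\|T^{\ast}\|=\|I\|=1$ then yields the key identity $a_n(C_{\Phi_\theta})=a_n(V)$.

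Next, I exploit the semigroup relation $\lambda_\alpha\circ\lambda_\beta=\lambda_{\alpha\beta}$, manifest in the right half-plane model $u\mapsto u^\alpha$. Since $\theta<1/2$, one has $\lambda_\theta=\lambda_{1/2}\circ\lambda_{2\theta}$ with $\lambda_{2\theta}$ still a genuine lens map. For $g\in B^2(\D)$ this gives
\begin{displaymath}
V(g)=g\circ\lambda_{1/2}\circ\lambda_{2\theta}=W\bigl(U(g)\bigr) \, ,
\end{displaymath}
where $U=C_{\lambda_{1/2}}\colon B^2(\D)\to H^2(\D)$ is bounded (proof of Theorem~\ref{bi-lens}(2)) and $W=C_{\lambda_{2\theta}}\colon H^2(\D)\to H^2(\D)$ is the standard one-variable lens-map composition operator. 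Invoking the classical estimate $a_n(W)\lesssim \e^{-b\sqrt{n}}$ with $b=c_{2\theta}>0$ (see \cite{LELIQR}), the ideal property yields
\begin{displaymath}
a_n(C_{\Phi_\theta})=a_n(V)\le \|U\|\, a_n(W)\lesssim \e^{-b\sqrt{n}} \, .
\end{displaymath}

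Passing to the subsequence $n\mapsto n^2$ gives $[a_{n^2}(C_{\Phi_\theta})]^{1/n}\lesssim \e^{-b}$, hence $\beta_2^{+}(C_{\Phi_\theta})\le \e^{-b}<1$; while $\|\Phi_\theta\|_\infty=\|\lambda_\theta\|_\infty=1$ because $\lambda_\theta(\pm 1)=\pm 1$, and $\Phi_\theta(1,z_2)=(1,1)\in\T^2$ for every $z_2\in\T$ shows that $\Phi_\theta(\T^2)\cap\T^2\neq\emptyset$. The only input not already present in the excerpt is the one-variable lens-map estimate $a_n(C_{\lambda_\alpha})\lesssim \e^{-c_\alpha\sqrt{n}}$ used in the second step, which is really the crux of the argument; it is proved by a Bernstein/Blaschke-product argument in the spirit of Theorem~\ref{fish} applied to the renormalized map $(1+\lambda_\alpha)/2$ which satisfies~\eqref{veritas}, and we take it as classical.
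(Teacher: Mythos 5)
Your proof is correct and follows essentially the same route as the paper: factor $C_{\Phi_\theta}$ through $B^2(\D)$ via the diagonal-restriction map, use the semigroup identity $\lambda_\theta=\lambda_{1/2}\circ\lambda_{2\theta}$ to write $C_{\lambda_\theta}\colon B^2\to H^2$ as the bounded operator $C_{\lambda_{1/2}}\colon B^2\to H^2$ followed by $C_{\lambda_{2\theta}}$ on $H^2$, and invoke the one-variable estimate $a_n(C_{\lambda_{2\theta}})\lesssim\e^{-b\sqrt n}$ from \cite{LELIQR}. Your extra observation that $J$ is a coisometry (so that one even gets $a_n(C_{\Phi_\theta})=a_n(C_{\lambda_\theta}\colon B^2\to H^2)$) is a correct refinement but not needed for the stated upper bound.
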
 
\begin{proof}
Proposition~\ref{B2-H2} (and its proof) can be rephrased in the following way: if $C_\phi$ maps boundedly $B^2 (\D)$ into $H^2 (\D)$, then, we have the 
following factorization:
\begin{equation} 
C_\Phi \colon H^2 (\D^2) \mathop{\longrightarrow}^J B^2 (\D) \mathop{\hbox to 20 pt {\rightarrowfill}}^{C_\phi} H^2 (\D) 
\mathop{\longrightarrow}^I H^2 (\D^2) \, ,
\end{equation} 
where $I \colon H^2 (\D) \to H^2 (\D^2)$ is the canonical injection given by $(If) (z_1, z_2) = f (z_1)$ for $f \in H^2 (\D)$, and 
$J \colon H^2 (\D^2) \to B^2 (\D)$ is the contractive map defined by: 
\begin{displaymath} 
(J f) (z) = \sum_{n = 0}^\infty \bigg( \sum_{j + k = n} c_{j, k} \bigg) z^n \, ,
\end{displaymath} 
for $f \in H^2 (\D^2)$ with $f (z_1, z_2) = \sum_{j, k \geq 0} c_{j, k} z_1^j z_2^k$.
\par\smallskip

In the proof of Theorem~\ref{bi-lens}, we have seen that, for $0 < \theta \leq 1/2$, the composition operator $C_{\lambda_\theta}$ is bounded from 
$B^2 (\D)$ into $H^2 (\D)$; we get hence the factorization:
\begin{displaymath} 
C_{\Phi_\theta} \colon H^2 (\D^2) \mathop{\longrightarrow}^J B^2 (\D) \mathop{\hbox to 25 pt {\rightarrowfill}}^{C_{\lambda_\theta}} H^2 (\D) 
\mathop{\longrightarrow}^I H^2 (\D^2) \, ,
\end{displaymath} 

Now, the lens maps have a semi-group property:
\begin{equation} 
\lambda_{\theta_1 \theta_2} = \lambda_{\theta_1} \lambda_{\theta_2} \, ,
\end{equation} 
giving $C_{\lambda_{\theta_1 \theta_2}} = C_{\lambda_{\theta_1}} \circ C_{\lambda_{\theta_2}}$.
\par\smallskip

For $0 < \theta < 1/2$, we therefore can write $C_{\lambda_\theta} = C_{\lambda_{2 \theta}} \circ C_{\lambda_{1/2}}$ (note that $2 \theta < 1$, so 
$C_{\lambda_{2 \theta}} \colon H^2 (\D) \to H^2 (\D)$ is bounded), and we get:
\begin{displaymath} 
C_{\Phi_\theta} = I \, C_{\lambda_{2 \theta}} C_{\lambda_{1/2}} \, J \, .
\end{displaymath} 
Consequently:
\begin{displaymath} 
a_n (C_{\Phi_\theta}) \leq \| I \| \, \|J \| \, \| C_{\lambda_{1/2}} \|_{B^2 \to H^2} \, a_n (C_{\lambda_{2 \theta}}) \, .
\end{displaymath} 

Now, we know (\cite{LELIQR}, Theorem~2.1) that $a_n (C_{\lambda_{2 \theta}}) \lesssim \e^{- b \sqrt{n}}$, so we get that 
$a_n (C_{\Phi_\theta}) \lesssim \e^{- b \sqrt{n}}$. 
\end{proof}
\smallskip

\noindent{\bf Remark.} In \cite{BLQR}, we saw that for a truly $2$-dimensional symbol $\Phi$, we have $\beta_2^- (C_\phi) > 0$. Here the symbol 
$\Phi_\theta$ is not truly $2$-dimensional, but we nevertheless have $\beta_2 (C_{\Phi_\theta}) > 0$. In fact, let 
$E = \{ f \in H^2 (\D^2) \, ; \ \frac{\partial f}{\partial z_2} \equiv 0\}$; $E$ is isometrically isomorphic to $H^2 (\D)$ and the restriction of 
$C_{\Phi_\theta}$ to $E$ behaves as the $1$-dimensional composition operator $C_{\lambda_\theta} \colon H^2 (\D) \to H^2 (\D)$; hence (\cite{LIQUEROD}, Proposition~6.3):
\begin{displaymath} 
\e^{- b_0 \sqrt{n}} \lesssim a_n (C_{\lambda_\theta}) = a_n ({C_{\Phi_\theta}}_{\mid E}) \leq a_n (C_{\Phi_\theta}) \, ,
\end{displaymath} 
and $\beta_2^- (C_{\Phi_\theta}) \geq \e^{- b_0} > 0$.

\section{Triangularly separated variables} \label{sec: triangularly}

In this section, we consider symbols of the form:
\begin{equation} \label{triangular-symbol}
\Phi (z_1, z_2) = \big( \phi (z_1), \psi (z_1) \, z_2\big) \, ,
\end{equation} 
where $\phi , \psi\colon \D \to \D$ are non-constant analytic maps. 
\par\smallskip

Such maps $\Phi$ are truly $2$-dimensional. 
\par\smallskip

More generally, if $h \in H^\infty$, with $h (0) = 0$ and $\| h \|_\infty \leq 1$, has its powers $h^k$, $k \geq 0$, orthogonal in $H^2$ (for convenience, 
we shall say that $h$ is a \emph{Rudin function}), we can consider:
\begin{equation} \label{triangular-symbol-with-h}
\Phi (z_1, z_2) = \big( \phi (z_1), \psi (z_1) \, h (z_2) \big) \,
\end{equation} 
For such $h$ we can take for example an inner function vanishing at the origin, but there are other such functions, as shown by C.~Bishop:
\medskip

\noindent{\bf Theorem} (Bishop \cite{Bishop}).  
{\it The function $h$ is a Rudin function if and only if the pull-back measure $\mu = \mu_h$ is radial and Jensen, i.e for every Borel set $E$:
\begin{displaymath} 
\qquad \quad \mu (\e^{i\theta} E)=\mu (E) \quad \text{and} \quad \int_{\overline{\D}} \log (1/|z|) \, d\mu (z) < \infty \, .
\end{displaymath} 
Conversely, for every probability measure $\mu$ supported by $\overline{\D}$, which is radial and Jensen, there exists $h$ in the unit ball of $H^\infty$, with 
$h (0) = 0$, such that $\mu = \mu_h$.}
\medskip

If we take for $\mu$ the Lebesgue measure of $\T$, we get an inner function. But, as remarked in \cite{Bishop}, we can take for $\mu$ the Lebesgue measure 
on the union $\T \cup (1/2)\T$, normalized in order that $\mu (T) = \mu \big( (1/2) \T \big) = 1/2$. Then the corresponding $h$ is not inner since 
$|h| = 1/2$ on a subset of $\T$ of positive measure. He also showed that $h (z) / z$ may be a non-constant outer function. Also, P.~Bourdon (\cite{Bourdon}) 
showed that the powers of $h$ are orthogonal if and only if its Nevanlinna counting function is almost everywhere constant on each circle centered on the origin.

\subsection{General facts} \label{general facts}

We first observe that if $f \in H^2 (\D^2)$ and:
\begin{displaymath} 
f (z_1, z_2) = \sum_{j, k \geq 0} c_{j, k} \,  z_1^j z_2^k \, , 
\end{displaymath} 
then we can write:
\begin{displaymath} 
f (z_1, z_2) = \bigg( \sum_{k \geq 0} f_k (z_1) \bigg) \, z_2^k
\end{displaymath} 
with:
\begin{displaymath} 
f_k (z_1) = \sum_{j \geq 0} c_{j, k} \,  z_1^j \, ,
\end{displaymath} 
and:
\begin{displaymath} 
\| f \|_{H^2 (\D^2)}^2 = \sum_{j, k \geq 0} |c_{j, k}|^2 = \sum_{k \geq 0} \| f_k \|_{H^2 (\D)}^2 \, .
\end{displaymath} 
That means that we have an isometric isomorphism:
\begin{displaymath} 
J \colon H^2 (\D^2) \longrightarrow \bigoplus_{k= 0}^\infty H^2 (\D) \, ,
\end{displaymath} 
defined by $J f = (f_k)_{k \geq 0}$.\par
\smallskip

Now, for symbols $\Phi$ as in \eqref{triangular-symbol}, we have:
\begin{displaymath} 
(C_\Phi f) (z_1, z_2) = \sum_{j, k \geq 0} c_{j, k} \, [\phi (z_1)]^j [\psi (z_1)]^k z_2^k \, ,
\end{displaymath} 
so that $J \, C_\Phi \, J^{- 1}$ appears as the operator $\bigoplus_k M_{\psi^k} C_\phi$ on $\bigoplus_k H^2 (\D)$, where $M_{\psi^k}$ is the multiplication 
operator by $\psi^k$:
\begin{displaymath} 
[(M_{\psi^k} C_\phi ) f_k ] (z_1) = [\psi (z_1)]^k \, [(f_k \circ \phi) (z_1)] \, .
\end{displaymath} 

When $\Phi$ is as in \eqref{triangular-symbol-with-h}, we have:
\begin{displaymath} 
(C_\Phi f) (z_1, z_2) = \sum_{j, k \geq 0} c_{j, k} \, [\phi (z_1)]^j [\psi (z_1)]^k [h (z_2)]^k \, ,
\end{displaymath} 
with:
\begin{displaymath} 
\| C_\Phi f\|^2 \leq \sum_{k = 0}^\infty \| T_k f_k\|^2 
\end{displaymath} 
and:
\begin{displaymath} 
T_k = M_{\psi^k} C_\phi \, ;
\end{displaymath} 
hence $J \, C_\Phi \, J^{- 1}$ appears as pointwise dominated by the operator $T = \oplus_k T_k$ on $\bigoplus_k H^2 (\D)$. This implies a 
factorization $C_\Phi = A T$ with $\| A \| \leq 1$, so that $a_n (C_\Phi) \leq a_n (T)$ for all $n \geq 1$.
\medskip

We recall the following elementary fact.
\begin{lemma}
Let $(H_k)_{k \geq 0}$ be a sequence of Hilbert spaces and $T_k \colon H_k \to H_k$ be bounded operators. Let $H = \bigoplus_{k = 0}^\infty H_k$ and 
$T \colon H \to H$ defined by $T x = (T_k x_k)_k$. Then:
\par\smallskip

$1)$ $T$ is bounded on $H$ if and only if $\sup_k \| T_k\| < \infty$; 
\par\smallskip

$2)$ $T$ is compact on $H$ if and only if each $T_k$ is compact and $\| T_k\| \converge_{k \to \infty} 0$.
\end{lemma}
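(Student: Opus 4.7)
My plan is as follows. For part (1), both directions are direct. For the ``if'' direction, I set $M = \sup_k \|T_k\|$ and compute, for $x = (x_k)_k \in H$,
\[
\|Tx\|^2 = \sum_{k \geq 0} \|T_k x_k\|^2 \leq M^2 \sum_{k \geq 0} \|x_k\|^2 = M^2 \|x\|^2,
\]
so $\|T\| \leq M$. For the ``only if'' direction, each $H_k$ embeds isometrically into $H$ via the obvious inclusion $i_k$, and $T \circ i_k = i_k \circ T_k$, so $\|T_k\| \leq \|T\|$ for every $k$; hence $\sup_k \|T_k\| \leq \|T\| < \infty$.

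For the necessity in part (2), assume $T$ is compact. For each fixed $k$, $T_k$ factors as $p_k T i_k$, where $p_k$ is the orthogonal projection of $H$ onto $H_k$ and $i_k$ the inclusion; since these are bounded and $T$ is compact, the ideal property gives $T_k$ compact. To see that $\|T_k\| \to 0$, I argue by contradiction: if not, there exists $\delta > 0$ and a subsequence $(k_j)$ with $\|T_{k_j}\| \geq \delta$, and I choose unit vectors $x_{k_j} \in H_{k_j}$ with $\|T_{k_j} x_{k_j}\| \geq \delta/2$. The images $i_{k_j}(x_{k_j})$ in $H$ are pairwise orthogonal unit vectors, hence tend weakly to $0$; compactness of $T$ would force $\|T i_{k_j}(x_{k_j})\| \to 0$, but this norm equals $\|T_{k_j} x_{k_j}\| \geq \delta/2$, a contradiction.

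For the sufficiency in part (2), assume each $T_k$ is compact and $\|T_k\| \to 0$. I define the truncations $S_N \colon H \to H$ by $S_N x = (T_0 x_0, T_1 x_1, \ldots, T_N x_N, 0, 0, \ldots)$. Each $S_N$ is compact as a finite orthogonal sum of compact operators (the range of $S_N$ is contained in the compact set $\bigoplus_{k=0}^N \overline{T_k(B_{H_k})}$ up to suitable scaling, or more cleanly, $S_N$ is a finite sum of compositions of compact operators with projections). Then
\[
\|T - S_N\| = \sup_{k > N} \|T_k\| \converge_{N \to \infty} 0
\]
by hypothesis, so $T$ is a norm limit of compact operators, and hence compact. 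The main (mild) obstacle is the weak-to-zero argument in the necessity part of (2); apart from that, everything reduces to standard facts about block-diagonal operators and the ideal property of compactness.
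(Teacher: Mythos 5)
Your proof is correct. The paper states this lemma without proof, as an elementary fact, and your argument (block-diagonal norm computation for part (1), the ideal property plus the orthonormal--hence weakly null--sequence for necessity in part (2), and norm-approximation by finite truncations for sufficiency) is exactly the standard argument one would supply.
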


Going back to the symbols of the form \eqref{triangular-symbol}, we have $\| M_{\psi^k} \| \leq \| \psi^k\|_\infty \leq 1$, since $\| \psi\|_\infty \leq 1$; 
hence $\| M_{\psi^k} C_\phi \| \leq \| C_\phi \|$ and the operator $(M_{\psi^k} C_\phi)_k$ is bounded on $\bigoplus_k H^2 (\D)$. Therefore 
$C_\Phi$ is bounded on $H^2 (\D^2)$. 
\medskip

For approximation numbers, we have the following two facts.

\begin{lemma} \label{lemme utile}
Let $T_k \colon H_k \to H_k$ be bounded linear operators between Hilbert spaces $H_k$, $k \geq 0$. Let $H = \bigoplus_k H_k$ and 
$T = (T_k)_k \colon H \to H$, assumed to be compact. Then, for every $n_1, \ldots, n_K \geq 1$, and $0 \leq m_1 < \cdots < m_K$, $K \geq 1$, we have:
\begin{equation} 
a_N (T) \geq \inf_{1 \leq k \leq K} a_{n_k} (T_{m_k}) \, ,
\end{equation} 
where $N = n_1 + \cdots + n_K$.
\end{lemma}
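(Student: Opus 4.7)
The plan is to use Bernstein numbers, which by \eqref{egalite} coincide with the approximation numbers on Hilbert spaces. The bound asserted is a lower bound, so I will exhibit an $N$-dimensional subspace $E \subseteq H$ on which $T$ acts with norm bounded below by $\min_k a_{n_k}(T_{m_k})$; by \eqref{Bernstein} this immediately yields $b_N(T) \geq \min_k a_{n_k}(T_{m_k})$, hence the claim.

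First I would note that the compactness of $T$ forces each $T_k$ to be compact (composing with the orthogonal projection and inclusion for the block $H_k$). For each $k \in \{1,\ldots,K\}$, the Schmidt decomposition of the compact operator $T_{m_k}$ gives orthonormal systems $(u_j^{(k)})$, $(v_j^{(k)})$ in $H_{m_k}$ with $T_{m_k} = \sum_j a_j(T_{m_k})\, u_j^{(k)} \odot v_j^{(k)}$. I then set
\begin{displaymath}
E_k = \mathrm{span}\bigl(v_1^{(k)}, \ldots, v_{n_k}^{(k)}\bigr) \subseteq H_{m_k},
\end{displaymath}
so that $\dim E_k = n_k$ and, for every $x \in E_k$,
\begin{displaymath}
\|T_{m_k} x\|^2 = \sum_{j=1}^{n_k} [a_j(T_{m_k})]^2 \, |\langle x, v_j^{(k)}\rangle|^2 \geq [a_{n_k}(T_{m_k})]^2 \, \|x\|^2.
\end{displaymath}

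Next I would embed each $E_k$ into $H$ through the canonical inclusion $H_{m_k} \hookrightarrow H = \bigoplus_j H_j$ placing it at the $m_k$-th slot, and set $E = E_1 \oplus \cdots \oplus E_K \subseteq H$. Since the indices $m_1 < \cdots < m_K$ are distinct, these embedded copies sit in mutually orthogonal summands of $H$, so $\dim E = n_1 + \cdots + n_K = N$. For any $x = (x_j)_{j \geq 0} \in E$, one has $x_j = 0$ unless $j = m_k$ for some $k$, and $x_{m_k} \in E_k$; therefore
\begin{displaymath}
\|Tx\|^2 = \sum_{k=1}^{K} \|T_{m_k} x_{m_k}\|^2 \geq \sum_{k=1}^{K} [a_{n_k}(T_{m_k})]^2 \, \|x_{m_k}\|^2 \geq \Bigl(\min_{1 \leq k \leq K} a_{n_k}(T_{m_k})\Bigr)^{\!2} \|x\|^2.
\end{displaymath}

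Consequently $\min_{x \in S_E} \|Tx\| \geq \min_k a_{n_k}(T_{m_k})$, and the definition \eqref{Bernstein} of the Bernstein numbers together with the equality $a_N(T) = b_N(T)$ on Hilbert spaces finishes the proof. No real obstacle arises: the only delicate point is making sure the test subspaces live in disjoint direct summands, which is precisely what the strict inequalities $m_1 < \cdots < m_K$ provide, guaranteeing both $\dim E = N$ and the block-diagonal orthogonality that turns $\|Tx\|^2$ into a clean sum.
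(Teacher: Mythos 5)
Your proof is correct and follows essentially the same route as the paper: both pass to Bernstein numbers via \eqref{egalite} and test $T$ on a block-diagonal subspace $E = \bigoplus_k E_k$ with $E_k \subseteq H_{m_k}$ of dimension $n_k$. The only (harmless) difference is that you produce $E_k$ explicitly from the Schmidt decomposition, which in fact justifies the attainment of the supremum in the definition of $b_{n_k}(T_{m_k})$ that the paper simply asserts.
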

\begin{proof}
We use the Bernstein numbers $b_n$ (see \eqref{Bernstein}), which are equal to the approximation numbers (see \eqref{egalite}).

For $k = 1, \ldots, K$, there is an $n_k$-dimensional subspace $E_k$ of $H_{m_k}$ such that:
\begin{displaymath} 
\qquad b_{n_k} (T_{m_k}) \leq \| T_{m_k} x \| \, , \quad \text{for all } x\in S_{E_k} \, . 
\end{displaymath} 
Then $E = \bigoplus_{k =1}^K E_k$ is an $N$-dimensional subspace of $H$ and for every $x = (x_1, x_2, \ldots) \in E$, we have: 
\begin{align*} 
\| T x \|^2 
& = \sum_{k \leq K} \| T_{m_k} x_{m_k}\|^2 \geq \sum_{k \leq K} [b_{n_k} (T_{m_k})]^2 \, \|x_{m_k}\|^2 \\
& \geq \inf_{k \leq K} [ b_{n_k} (T_{m_k}) ]^2 \sum_{k \leq K} \| x_{m_k}\|^2 
= \inf_{k \leq K} [b_{n_k} (T_{m_k}) ]^2 \| x\|^2 \, ;
\end{align*} 
hence $b_N (T) \geq \inf_{k \leq K} b_{n_k} (T_{m_k}) $, and we get the announced result.
\end{proof}
\begin{lemma} \label{majo} 
Let $T = \bigoplus_{k=0}^\infty T_k$ acting on a Hilbertian sum $H = \bigoplus_{k = 0}^\infty H_k$. Let $n_0, \ldots, n_K$ be positive integers and 
$N = n_0 + \cdots + n_K - K$.  Then, the approximation numbers of  $T$ satisfy:
\begin{equation}\label{maxsup} 
a_{N}(T) \leq \max \raise -1 pt \hbox{$\Big($} \max_{0\leq k\leq K} a_{n_k}(T_k), \sup_{k > K} \Vert T_k \Vert \raise -1 pt \hbox{$\Big)$} \, .
\end{equation}
\end{lemma}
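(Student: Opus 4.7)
The plan is to construct an explicit finite-rank perturbation of $T$ that realizes the desired upper bound. Given $\varepsilon > 0$, for each index $k \in \{0, 1, \ldots, K\}$, I would use the very definition of the approximation numbers to choose an operator $R_k \colon H_k \to H_k$ with $\operatorname{rank} R_k < n_k$ (i.e.\ $\operatorname{rank} R_k \leq n_k - 1$) satisfying $\|T_k - R_k\| \leq a_{n_k}(T_k) + \varepsilon$. Then I assemble these into the block-diagonal operator
$$R = R_0 \oplus R_1 \oplus \cdots \oplus R_K \oplus 0 \oplus 0 \oplus \cdots$$
on the Hilbertian sum $H = \bigoplus_{k\geq 0} H_k$, extending by zero on the tail.

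The next step is a rank count, which is where one must be careful. Only the first $K+1$ blocks contribute, and each contributes at most $n_k - 1$ to the rank; hence
$$\operatorname{rank} R \;\leq\; \sum_{k=0}^K (n_k - 1) \;=\; (n_0 + n_1 + \cdots + n_K) - (K+1) \;=\; N - 1 \;<\; N,$$
so $R$ is admissible in the infimum defining $a_N(T)$. This is the one point that requires attention: the $-K$ (rather than $-(K+1)$) appearing in the definition of $N$ is precisely calibrated so that a strict inequality $\operatorname{rank} R < N$ holds.

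It then remains to estimate $\|T - R\|$. Since $T$ and $R$ are both block-diagonal with respect to the decomposition $H = \bigoplus_k H_k$, so is $T - R$, with blocks $T_k - R_k$ for $0 \leq k \leq K$ and $T_k$ for $k > K$. The norm of a block-diagonal operator on a Hilbertian direct sum is the supremum of the norms of its blocks, so
$$\|T - R\| = \max\!\Big(\max_{0 \leq k \leq K}\|T_k - R_k\|,\ \sup_{k > K}\|T_k\|\Big) \leq \max\!\Big(\max_{0 \leq k \leq K} a_{n_k}(T_k) + \varepsilon,\ \sup_{k > K}\|T_k\|\Big).$$
Consequently $a_N(T) \leq \|T - R\|$ by definition of $a_N$; letting $\varepsilon \to 0$ yields \eqref{maxsup}. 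There is no genuine obstacle here beyond the rank bookkeeping highlighted above; the argument does not even require compactness of $T$, only that $T$ be bounded (which, as in the preceding lemma, amounts to $\sup_k \|T_k\| < \infty$, otherwise the right-hand side of \eqref{maxsup} is infinite and the inequality is trivial).
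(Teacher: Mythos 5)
Your proof is correct and is essentially the paper's own argument: choose near-optimal finite-rank $R_k$ on the first $K+1$ blocks, form the block-diagonal perturbation $R=\bigoplus_{k=0}^K R_k$, check $\operatorname{rank}R\leq\sum_{k=0}^K(n_k-1)=N-1<N$, and bound $\Vert T-R\Vert$ by the supremum of the block norms. The only cosmetic difference is your $\varepsilon$-approximation of $a_{n_k}(T_k)$ (the paper takes the infimum as attained), which changes nothing.
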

\begin{proof}
Denote by $S$ the right-hand side of \eqref{maxsup}. Let $R_k$, $0\leq k\leq K$ be operators on $H_k$ of respective rank $< n_k$ such that 
$\Vert T_k - R_k\Vert = a_{n_k}(T_k)$  and let $R =\bigoplus_{k = 0}^{K} R_k$. Then $R$ is an operator of rank $\leq n_0 + \cdots + n_K - K - 1 <  N$. If 
$f = \sum_{k = 0}^\infty f_k \in H$, we see that:
\begin{align*}
\Vert T f - R f \Vert^{2} 
& =\sum_{k = 0}^K \Vert T_{k} f_k - R_{k}f_k \Vert^{2} + \sum_{k > K} \Vert T_{k} f_k \Vert^{2} \\
& \leq \sum_{k = 0}^{K} a_{n_k} (T_k)^{2} \Vert f_k \Vert^2 + \sum_{k > K} \Vert T_{k} f_k \Vert^{2}
\leq S^2 \sum_{k = 0}^\infty \Vert f_k \Vert^2 = S^2 \Vert f \Vert^2 \, ,
\end{align*}
hence the result.
\end{proof}

We give now two corollaries of Lemma~\ref{majo}.
\smallskip

\noindent{\bf Example 1.} We first use lens maps. We get:
\begin{theorem} \label{majo lens}
Let $\lambda_\theta$ the lens map of parameter $\theta$ and let $\psi \colon \D \to \D$ such that $\|\psi \|_\infty := c < 1$ and $h$ a Rudin function. 
We consider:
\begin{displaymath} 
\Phi (z_1, z_2) = \big( \lambda_\theta (z_1), \psi (z_1) \, h (z_2) \big) \, .
\end{displaymath} 
Then, for some positive constant $\beta$, we have, for all $N \geq 1$:
\begin{equation} 
a_N (C_\Phi) \lesssim \e^{- \beta \, N^{1/3}} \, .
\end{equation} 
\end{theorem}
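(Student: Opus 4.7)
\medskip
\noindent\textbf{Proof plan.} The plan is to reduce the two-variable composition operator to a Hilbertian direct sum of one-variable weighted composition operators, estimate each summand using the one-variable bound for the lens map, and then balance the parameters in Lemma~\ref{majo}.

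\emph{First step: reduction.} Since $h$ is a Rudin function, the discussion in Section~\ref{general facts} yields a factorization $C_\Phi = A \, T \, J$, where $J \colon H^2(\D^2) \to \bigoplus_{k \geq 0} H^2(\D)$ is the canonical isometric isomorphism, $T = \bigoplus_{k \geq 0} T_k$ with $T_k = M_{\psi^k} C_{\lambda_\theta}$, and $A \colon \bigoplus_k H^2(\D) \to H^2(\D^2)$ is the contraction $A(g_k)_k(z_1,z_2) = \sum_k g_k(z_1) [h(z_2)]^k$, which is a contraction by orthogonality of the $h^k$ in $H^2(\D)$ and $\|h^k\|_2 \leq 1$. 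Consequently $a_N(C_\Phi) \leq a_N(T)$.

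\emph{Second step: pointwise estimates on the summands.} Since $\|M_{\psi^k}\|_{H^2 \to H^2} \leq \|\psi\|_\infty^k = c^k$, the ideal property of approximation numbers gives $a_n(T_k) \leq c^k \, a_n(C_{\lambda_\theta})$. The one-variable lens map satisfies $a_n(C_{\lambda_\theta}) \lesssim \e^{- b \sqrt{n}}$ for some $b = b_\theta > 0$ (Theorem~2.1 of \cite{LELIQR}, used earlier in this section). Hence
\begin{equation*}
a_n(T_k) \lesssim c^k \, \e^{- b \sqrt{n}} \quad \text{and} \quad \|T_k\| \leq c^k \, \|C_{\lambda_\theta}\| \, .
\end{equation*}

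\emph{Third step: optimization via Lemma~\ref{majo}.} Fix a large integer $K$ and set $\alpha = \log(1/c) / b > 0$. Choose $n_k = \lceil \alpha^2 (K - k)^2 \rceil$ for $0 \leq k \leq K$, so that $\e^{- b \sqrt{n_k}} \leq c^{K - k}$ and therefore $a_{n_k}(T_k) \lesssim c^K$. On the other hand, $\sup_{k > K} \|T_k\| \leq c^{K+1} \|C_{\lambda_\theta}\| \lesssim c^K$. Lemma~\ref{majo} then yields
\begin{equation*}
a_{N_K}(T) \lesssim c^K, \qquad \text{where} \qquad N_K = \sum_{k=0}^K n_k - K \, .
\end{equation*}
A direct summation gives $N_K = \alpha^2 \sum_{j = 0}^K j^2 + O(K) \sim \alpha^2 K^3 / 3$, so $K \sim (3 N_K / \alpha^2)^{1/3}$, whence $c^K = \e^{- K \log(1/c)} \lesssim \e^{- \beta \, N_K^{1/3}}$ for any $\beta < \log(1/c) (3 / \alpha^2)^{1/3} = (3 b^2 \log(1/c))^{1/3}$.

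\emph{Fourth step: interpolation in $N$.} Since $N_K / N_{K-1} \to 1$ and $a_N(T)$ is non-increasing, the estimate $a_{N_K}(T) \lesssim \e^{- \beta N_K^{1/3}}$ at the discrete levels $N_K$ propagates to $a_N(T) \lesssim \e^{- \beta' N^{1/3}}$ for all $N \geq 1$ with a slightly smaller $\beta' > 0$; combined with $a_N(C_\Phi) \leq a_N(T)$ this finishes the proof. The main (minor) obstacle is simply to check the asymptotic $N_K \sim \alpha^2 K^3/3$ and to handle the rounding in the choice of $n_k$; the conceptual point is that the geometric decay in $k$ combined with the $\e^{-b\sqrt{n}}$ decay in $n$ produces, through the triangle of balancing, the cube-root exponent $1/3$.
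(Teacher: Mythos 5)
Your proof is correct and follows essentially the same route as the paper: reduce to the direct sum $T=\bigoplus_k M_{\psi^k}C_{\lambda_\theta}$, apply the one-variable bound $a_n(C_{\lambda_\theta})\lesssim \e^{-b\sqrt n}$ together with $\|M_{\psi^k}\|\leq c^k$, and balance in Lemma~\ref{majo} to land at $N\approx K^3$. The only (harmless) difference is your choice $n_k\approx\alpha^2(K-k)^2$ in place of the paper's uniform $n_k=K^2$; both give the same cube-root exponent, and your rounding caveat (ensuring $n_K\geq 1$) is indeed the only detail to fix.
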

\begin{proof}
Let $T_k = M_{\psi^k} C_{\lambda_\theta}$. We have $\| T_k \| \leq c^k$, so $\sup_{k > K} \| T_k\| \leq c^K$. On the other hand, we have 
$a_n (T_k) \leq c^k \, a_n (C_{\lambda_\theta}) \leq a_n (C_{\lambda_\theta}) \lesssim \e^{- \beta_\theta \sqrt{n}}$ (\cite{LELIQR}, Theorem~2.1). 
Taking $n_0 = n_1 = \cdots = n_K = K^2$ in Lemma~\ref{majo}, we get:
\begin{displaymath} 
\max_{0 \leq k \leq K} a_{n_k} (T_k) \lesssim \e^{ - \beta_\theta K} \, .
\end{displaymath} 
Since $n_0 + \cdots + n_K - K \approx K^3$, we obtain $a_{K^3} \lesssim \e^{ - \beta  K}$, which gives the claimed result, by taking 
$\beta = \max \big (\beta_\theta, \log (1/ c) \big)$. 
\end{proof}
\smallskip

\noindent{\bf Example 2.} We consider the cusp map $\chi$. We have:
\begin{theorem} \label{majo cusp}
Let $\chi$ be the cusp map, $h$ a Rudin function, and $\psi$ in the unit ball of $H^\infty$, with $\| \psi \|_\infty := c < 1$. Let:
\begin{displaymath} 
\Phi (z_1, z_2) = \big( \chi (z_1), \psi (z_1) \, h (z_2) \big) \, .
\end{displaymath} 
Then, for positive constant $\beta$, we have, for all $N \geq 1$:
\begin{displaymath} 
a_N (C_\Phi) \lesssim \e^{ - \beta \sqrt{N} / \sqrt{\log N}} \, .
\end{displaymath} 
\end{theorem}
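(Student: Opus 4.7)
The plan is to reduce to the one-variable setting via the subsection \ref{general facts} machinery, then balance the geometric decay coming from $\psi^k$ against the (nearly exponential) decay of the approximation numbers of the cusp composition operator.

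First I would set $T_k = M_{\psi^k} C_\chi \colon H^2(\D) \to H^2(\D)$ and $T = \bigoplus_{k \geq 0} T_k$ on $\bigoplus_{k \geq 0} H^2(\D)$. The general discussion of Section~\ref{general facts} gives a factorization $C_\Phi = AT$ with $\|A\| \leq 1$ (using that $h$ is a Rudin function, so its powers are orthogonal in $H^2$), whence $a_N(C_\Phi) \leq a_N(T)$ for every $N$. Two trivial estimates are then available: $\|T_k\| \leq \|\psi\|_\infty^k \|C_\chi\| = c^k \|C_\chi\|$, and $a_n(T_k) \leq \|M_{\psi^k}\|\, a_n(C_\chi) \leq a_n(C_\chi)$. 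From the papers \cite{LELIQR-TAMS, LELIQR-capacity} on the cusp map we know that there is a constant $\alpha>0$ such that $a_n(C_\chi) \lesssim \exp(-\alpha n/\log n)$, and this is the input we need.

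Next I would apply Lemma~\ref{majo} with $n_0 = n_1 = \cdots = n_K = n$. This yields
\begin{displaymath}
a_{(K+1)n - K}(T) \leq \max\Bigl(\max_{0\leq k \leq K} a_n(T_k),\ \sup_{k>K} \|T_k\|\Bigr)
\lesssim \max\bigl(\e^{-\alpha n/\log n},\ c^K\bigr).
\end{displaymath}
To balance the two terms, I pick $K \asymp n/\log n$ (with constant adjusted so that $c^K \leq \e^{-\alpha n/\log n}$, using $\log(1/c) > 0$). Setting $N = (K+1)n - K$, we then have $N \asymp Kn \asymp n^2/\log n$, so inverting gives $n \asymp \sqrt{N \log n}$, and since $\log n \asymp \tfrac{1}{2}\log N$ to leading order, $n \asymp \sqrt{N \log N}$. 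Feeding this back into the exponent,
\begin{displaymath}
\frac{n}{\log n} \asymp \frac{\sqrt{N \log N}}{\log N} \asymp \sqrt{\frac{N}{\log N}},
\end{displaymath}
which yields the announced bound $a_N(C_\Phi) \lesssim \exp\bigl(-\beta \sqrt{N}/\sqrt{\log N}\bigr)$ for a suitable $\beta > 0$. It then only remains to interpolate to arbitrary integers $N$ (values between two consecutive $(K+1)n - K$), which costs at most a constant factor in $\beta$.

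The main obstacle is bookkeeping: one must choose integer values of $K$ and $n$ for each $N$, verify that the logarithmic substitution $\log n \sim \tfrac12 \log N$ survives the optimization, and handle the small $N$ range separately. Conceptually everything follows the template of Theorem~\ref{majo lens}, with the exponent $\e^{-\beta_\theta \sqrt n}$ for lens maps simply replaced by the sharper cusp bound $\e^{-\alpha n/\log n}$; this replacement is what turns the $N^{1/3}$ rate into the $\sqrt{N/\log N}$ rate.
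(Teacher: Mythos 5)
Your proposal is correct and follows essentially the same route as the paper's proof: decompose $C_\Phi$ through $T=\bigoplus_k M_{\psi^k}C_\chi$, invoke the cusp estimate $a_n(C_\chi)\lesssim \e^{-\alpha n/\log n}$, and apply Lemma~\ref{majo} with equal block sizes chosen so that $c^K$ and $\e^{-\alpha n/\log n}$ balance (your choice $K\asymp n/\log n$ is just a reparameterization of the paper's $n_k=K[\log K]$). The only cosmetic point is that the cusp estimate is the one from \cite{LIQURO-Estimates}, Theorem~4.3, rather than the references you cite.
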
 
\begin{proof}
Let $T_k = M_{\psi^k} C_\chi$. As above, we have $\sup_{k > K} \| T_k\| \leq c^K$. For the cusp map, we have 
$a_n (C_\chi) \lesssim \e^{ - \alpha n / \log n}$ (\cite{LIQURO-Estimates}, Theorem~4.3); hence $a_n (T_k) \lesssim \e^{ - \alpha n / \log n}$. We 
take $n_0 = n_1 = \cdots = n_K = K \, [\log K]$ (where $[\log K]$ is the integer part of $\log K$). Since $n_0 + \cdots + n_K \approx K^2 [\log K]$,  we get, 
for another $\alpha > 0$:
\begin{displaymath} 
a_{K^2 [\log K]} (C_\Phi) \lesssim \e^{- \alpha K} \, ,
\end{displaymath} 
which reads: $a_N (C_\Phi) \lesssim \e^{ - \beta \sqrt{N / \log N}}$, as claimed. 
\end{proof}

\subsection{Lower bounds}

In this subsection, we give lower bounds for approximation numbers of composition operators on $H^2$ of the bidisk, attached to a symbol $\Phi$  of the 
previous form $\Phi (z_1, z_2) = \big( \phi (z_1), \psi (z_1) \, h (z_2) \big)$ where $h$ is a Rudin function. The sharpness of those estimates will be discussed 
in the next subsection. We first need some lemmas in dimension one.

\begin{lemma} \label{mino capa}
Let $u, v \colon \D \to \D$ be two non-constant analytic self-maps and $T = M_v C_u \colon H^2 (\D) \to H^2 (\D)$ be the associated weighted 
composition operator. For $0 < r < 1$, we set $A = u (r \, \overline{\D})$ and $\Gamma = \exp \big( - 1 / \capa (A) \big)$. Then, for 
$0 < \delta \leq \inf_{|z| = r} |v (z)|$, we have:
\begin{equation} \label{mino1}
a_n (T) \gtrsim \sqrt{1 - r}\, \delta \, \Gamma^n \, .
\end{equation} 
\end{lemma}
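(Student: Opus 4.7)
The plan is to use the identity $a_n(T)=b_n(T)$ from \eqref{egalite} and exhibit an $n$-dimensional subspace $E\subset H^2(\D)$ on which the Bernstein quotient $\inf_{f\in E\setminus\{0\}}\|Tf\|/\|f\|$ is at least of order $\sqrt{1-r}\,\delta\,\Gamma^n$.

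First I would choose $n$ points $a_1,\dots,a_n\in A$, placed as Fekete--Leja points for the Green capacity of $A$ and moreover lying in $u(r\T)\subset\partial A$, so that their preimages $z_j\in r\T$ satisfy $|v(z_j)|\geq \delta$. The test subspace would be $E={\rm span}(K_{a_1},\dots,K_{a_n})$ (equivalently the model space $K_B$ attached to the finite Blaschke product $B=\prod_j\phi_{a_j}$, with $\phi_a(z)=(z-a)/(1-\bar a z)$). For any $f\in E$, the elementary reproducing-kernel bound $|h(z)|\leq\|h\|_{H^2}/\sqrt{1-|z|^2}$ applied to $h=Tf$ at $z=z_j$ yields $\|Tf\|_{H^2}\geq\sqrt{1-r^2}\,\delta\,|f(a_j)|$, and maximising over $j$ gives $\|Tf\|_{H^2}\geq\sqrt{1-r}\,\delta\,\max_j|f(a_j)|$.

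The heart of the proof is then the capacity-driven Bernstein inequality
\begin{displaymath}
\max_{1\leq j\leq n}|f(a_j)|\gtrsim \Gamma^n\,\|f\|_{H^2}\quad\text{for all }f\in E.
\end{displaymath}
To establish it I would work in the Malmquist--Walsh orthonormal basis $(e_k)_{k=1}^{n}$ of $K_B$, so that $f=\sum c_k e_k$ with $\|f\|_{H^2}=\|c\|_2$ and the vector of values $(f(a_j))_j$ becomes a lower-triangular linear image of $(c_k)$ whose diagonal entries are $B_{k-1}(a_k)/\sqrt{1-|a_k|^2}$, where $B_{k-1}=\prod_{j<k}\phi_{a_j}$. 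The Chebyshev--Blaschke bound associated with Green capacity ensures $|B_{k-1}(a_k)|\gtrsim \Gamma^n$ (up to subexponential factors in $n$) for Leja-type points in $A$, so inverting the triangular system then produces the required inequality.

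The main obstacle is precisely this triangular inversion: the off-diagonal entries of the evaluation matrix are only controlled by constants of order $(1-R^2)^{-1/2}$ (with $R=\sup_{a\in A}|a|<1$) while the diagonal entries are of size $\Gamma^n$, so a naive cascade would blow up to order $\Gamma^{n^2}$. Extracting the sharp exponent $\Gamma^n$ therefore requires either a careful exploitation of the sequential Leja extremality of the chosen points, or a direct duality argument relating the pseudohyperbolic Vandermonde of the $a_j$'s to the Green capacity of $A$; granted this, combining with the previous paragraph gives $a_n(T)=b_n(T)\gtrsim\sqrt{1-r}\,\delta\,\Gamma^n$, as claimed.
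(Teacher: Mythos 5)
Your overall strategy is genuinely different from the paper's, and unfortunately the step you yourself flag as ``the main obstacle'' is exactly where the proof is missing, so as written this is a gap rather than a complete argument. The paper does not construct any explicit test subspace. It invokes Widom's theorem (Theorem~\ref{widom} in the text): for \emph{every} $(n-1)$-dimensional subspace $E$ of $\mathcal{C}(A)$ there is an $f\in B_{H^\infty}$ with $\operatorname{dist}_{\mathcal{C}(A)}(f,E)\geq \alpha\,\Gamma^n$. Viewing an arbitrary $(n-1)$-dimensional subspace of $H^2$ inside $\mathcal{C}(A)$, pulling back by $u$ to the circle $r\T$, multiplying by $v$ (which costs a factor $\delta$), and using the pointwise bound $\|g\|_{\mathcal{C}(r\T)}\leq (1-r^2)^{-1/2}\|g\|_{H^2}$, one gets $\|Tf-Th\|_{H^2}\geq \alpha\sqrt{1-r^2}\,\delta\,\Gamma^n$ for all $h\in E$, i.e.\ a lower bound for the Kolmogorov number $d_n(T)=a_n(T)$. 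All of the capacity content is thus outsourced to Widom's theorem, and no inversion of an evaluation matrix is ever needed.

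Your route, by contrast, must \emph{prove} the capacity-driven Bernstein inequality $\max_j|f(a_j)|\gtrsim\Gamma^n\|f\|_{H^2}$ on the span of the kernels $K_{a_1},\dots,K_{a_n}$, and you do not do so. The difficulty is not cosmetic: as you note, the Malmquist--Walsh triangular system has diagonal entries of size roughly $\Gamma^n$ and off-diagonal entries of size ${\rm O}\big((1-R^2)^{-1/2}\big)$, so naive back-substitution degrades the bound to $\Gamma^{{\rm O}(n^2)}$, which is useless for the lemma (it would only give $\beta$-type information with the wrong exponent). Saying that ``sequential Leja extremality'' or ``a duality argument relating the pseudohyperbolic Vandermonde to the Green capacity'' would fix this is essentially restating Widom's theorem (or its dual) without proving it; that statement is a genuine theorem of \cite{WID}, not an elementary manipulation. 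The correct fix is either to carry out such a duality argument in detail --- which amounts to reproving Widom --- or, much more simply, to abandon the explicit test subspace and argue as the paper does: let $E$ be arbitrary of dimension $n-1$, apply Widom's theorem to $E\subset\mathcal{C}(A)$ to produce a bad $f\in B_{H^\infty}\subseteq B_{H^2}$, and conclude via Kolmogorov numbers and \eqref{egalite}. Your first reduction (the factor $\sqrt{1-r^2}\,\delta$ coming from the reproducing-kernel estimate on $r\T$ and the lower bound on $|v|$ there) is exactly the right ingredient and is the same one the paper uses, just applied in the opposite direction.
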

\smallskip

In this lemma, $\capa (A)$ denotes the Green capacity of the compact subset $A \subseteq \D$ (see \cite{LQR}, \S~2.3 for the definition). 
\smallskip

For the proof, we need the following result (\cite{WID}, Theorem~7, p.~353).
\begin{theorem} [Widom] \label{widom}
Let $A$ be a compact subset of  $\mathbb{D}$ and $\mathcal{C} (A)$ be the space of continuous functions on $A$ with its natural norm. Set:
\begin{displaymath} 
\tilde{d}_{n} (A) = \inf_{E} \raise - 2pt \hbox{$\bigg[$} \sup_{f \in B_{H^\infty}} {\rm dist} \, (f, E) \raise - 2pt \hbox{$\bigg]$} \, , 
\end{displaymath} 
where $E$ runs over all $(n - 1)$-dimensional subspaces of $\mathcal{C} (A)$ and 
${\rm dist} \, (f, E)  = \inf_{h \in E} \Vert f - h \Vert_{\mathcal{C}(A)}$. Then 
\begin{equation} 
\tilde{d}_{n} (A) \geq \alpha \, \e^{- n/\capa (A)}  
\end{equation} 
for some positive constant $\alpha$.
\end{theorem}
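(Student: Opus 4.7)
The plan is to combine Hahn-Banach duality with the classical Fekete/Chebyshev asymptotic for the Green capacity of $A$.

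First, by Hahn-Banach duality, for any $(n-1)$-dimensional subspace $E \subset \mathcal{C}(A)$,
\begin{displaymath}
\sup_{f \in B_{H^\infty}} {\rm dist}_{\mathcal{C}(A)}(f,E) = \sup\big\{\|\mu\|_{(H^\infty)^*} : \mu \in M(A),\ \mu \perp E,\ \|\mu\|_{M(A)}\leq 1\big\},
\end{displaymath}
where $M(A)$ denotes the space of finite signed Borel measures on $A$ and $\|\mu\|_{(H^\infty)^*} := \sup\{|\int f\, d\mu| : f \in H^\infty,\ \|f\|_\infty \leq 1\}$. It therefore suffices, for each such $E$, to exhibit one nonzero $\mu \in E^\perp$ with $\|\mu\|_{(H^\infty)^*} \geq \alpha\, \e^{- n / \capa (A)}\, \|\mu\|_{M(A)}$.

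Next, I would select Fekete points for the pseudo-hyperbolic metric: pick $z_1, \ldots, z_n \in A$ maximizing $V_n := \prod_{i<j}\rho(z_i, z_j)$. The classical Fekete-Szeg\H{o} theorem adapted to the Green function of $\D$ (cf.\ Tsuji's potential-theory book) shows that the sequence $V_n^{2/(n(n-1))}$ is non-increasing with limit $\e^{- 1 / \capa (A)}$, so in particular $V_n^{2/(n(n-1))} \geq \e^{- 1 / \capa (A)}$ for all $n$. Since $\dim E = n-1 < n$, the homogeneous system $\sum_{j=1}^n \lambda_j h(z_j) = 0$ (with $h$ ranging over a basis of $E$) has a nontrivial solution $(\lambda_j)_j$, yielding the measure $\mu := \sum_j \lambda_j \delta_{z_j} \in E^\perp$ supported on the Fekete configuration.

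I would then test $\mu$ on partial Blaschke products. For each $k$, the product $B_k(z) := \prod_{j\neq k}\frac{z-z_j}{1-\bar z_j z}$ lies in $B_{H^\infty}$ and vanishes at every $z_j$ with $j\neq k$, so $|\int B_k\, d\mu| = |\lambda_k|\prod_{j\neq k}\rho(z_k, z_j)$. From the identity $V_n^2 = \prod_k \prod_{j\neq k}\rho(z_k, z_j)$ and the AM-GM inequality, at least one index $k_0$ satisfies
\begin{displaymath}
\prod_{j\neq k_0}\rho(z_{k_0}, z_j) \geq V_n^{2/n} = \big(V_n^{2/(n(n-1))}\big)^{n-1} \geq \e^{-(n-1)/\capa(A)} \geq \e^{-n/\capa(A)},
\end{displaymath}
giving $\|\mu\|_{(H^\infty)^*} \geq |\lambda_{k_0}|\, \e^{-n/\capa(A)}$.

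The main obstacle lies in controlling $\|\mu\|_{(H^\infty)^*}/\|\mu\|_{M(A)}$ without a polynomial-in-$n$ loss: the estimate above only lower-bounds this ratio by $|\lambda_{k_0}|/\sum_j|\lambda_j|$, which may be as small as $1/n$. Overcoming this requires a finer test function built via Nevanlinna-Pick interpolation: one seeks $f \in B_{H^\infty}$ with $f(z_k) = c_n \, \overline{\lambda_k}/|\lambda_k|$ for a uniform constant $c_n$ comparable to $\e^{-n/\capa(A)}$, so that $|\int f\, d\mu| \geq c_n \|\mu\|_{M(A)}$. Solvability of this Pick problem at the Fekete configuration reduces to a positive-semi-definiteness check for the associated Pick matrix $\big(\tfrac{1 - c_n^2\, \overline{\lambda_j}\lambda_k/(|\lambda_j||\lambda_k|)}{1-\bar z_j z_k}\big)_{j,k}$, which is the delicate step at the heart of Widom's original argument; the underlying potential-theoretic ingredients (monotonicity of transfinite diameters and their identification with $\e^{-1/\capa(A)}$) are classical.
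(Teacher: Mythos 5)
This statement is quoted in the paper from Widom's 1972 article (\cite{WID}, Theorem~7) and is given no proof there, so there is nothing internal to compare your argument against; I can only assess it on its own terms. Your framework --- Hahn--Banach duality to reduce to finding one annihilating measure $\mu\in E^\perp$ with $\|\mu\|_{(H^\infty)^*}\gtrsim \e^{-n/\capa (A)}\|\mu\|_{M(A)}$, hyperbolic Fekete points, and partial Blaschke products as test functions --- is sound, and the classical inputs (monotonicity of $V_n^{2/(n(n-1))}$ and its identification with $\e^{-1/\capa (A)}$) are correctly invoked. But the argument as written does not prove the theorem, and the gap is exactly where you locate it. Two points. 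First, your interim estimate is weaker than you claim: the index $k_0$ maximizing $\prod_{j\neq k}\rho(z_k,z_j)$ is determined by the geometry of the Fekete configuration, not by $\mu$, so $|\lambda_{k_0}|/\sum_j|\lambda_j|$ can be arbitrarily small (even $0$), not merely $1/n$. This particular defect is repairable: for Fekete points one has $P_k:=\prod_{j\neq k}\rho(z_k,z_j)=\max_{z\in A}\prod_{j\neq k}\rho(z,z_j)\geq \tau_{n-1}(A)$, the $(n-1)$-st hyperbolic Chebyshev constant, and since the $\tau_m$ are submultiplicative, Fekete's lemma gives $\tau_m^{1/m}\geq\lim\tau_m^{1/m}=\e^{-1/\capa (A)}$, whence $P_k\geq\e^{-n/\capa (A)}$ for \emph{every} $k$; testing against $B_k$ for the $k$ maximizing $|\lambda_k|$ then yields $\tilde d_n(A)\geq n^{-1}\e^{-n/\capa (A)}$. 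That still falls short of the stated inequality with a constant $\alpha$ independent of $n$ (though it would suffice for every application made of the theorem in this paper, where only $n$-th-root asymptotics matter).

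Second, the Nevanlinna--Pick repair you propose to remove the $1/n$ is not carried out, and its feasibility is precisely the hard quantitative content. With target values of common modulus $c_n$ and arbitrary phases (the phases of the $\lambda_k$ are dictated by $E$ and cannot be chosen), positivity of the Pick matrix amounts to $G\succeq c_n^2\,UGU^*$ for every diagonal unitary $U$, where $G=\big((1-\bar z_jz_k)^{-1}\big)$ is the Szeg\H{o} Gram matrix at the Fekete points; this requires $c_n^2\leq\lambda_{\min}(G)/\lambda_{\max}(G)$, and no bound of the form $\lambda_{\min}(G)/\lambda_{\max}(G)\gtrsim\e^{-2n/\capa (A)}$ is established or obvious (the crude estimate via $\det G=V_n^2\prod_i(1-|z_i|^2)^{-1}$ gives only $\e^{-n(n-1)/\capa (A)}$, which is uselessly small). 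So the proposal should be regarded as an incomplete sketch: the skeleton is plausible and the potential-theoretic ingredients are the right ones, but the decisive step --- passing from a bound against a single point mass to a bound against the total variation of $\mu$ without exponential or even polynomial loss --- is missing, and it is the heart of Widom's theorem.
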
 
\begin{proof} [Proof of Lemma~\ref{mino capa}] 
We apply Theorem~\ref{widom} to the compact set $A = u (r \, \overline{\D})$. 

Let  $E$ be an $(n - 1)$-dimensional subspace of $H^2 = H^2 (\D)$; it can be viewed as a subspace of 
${\mathcal C} (A)$, so, by Theorem~\ref{widom}, there exists $f \in H^\infty \subseteq H^2$ with $\| f \|_2 \leq \| f \|_\infty \leq 1$ such that:
\begin{displaymath} 
\qquad \| f - h \|_{{\mathcal C} (A)} \geq \alpha \, \Gamma^n \, , \quad \forall h \in E \, .
\end{displaymath} 
Then:
\begin{displaymath} 
\| v \, ( f \circ u - h \circ u)\|_{{\mathcal C} (r \T)} \geq \delta \, \| (f - h) \circ u \|_ {{\mathcal C} (r \T)} 
= \delta \, \| f - h\|_{{\mathcal C}(A)}  \geq \alpha \, \delta \, \Gamma^n \,.
\end{displaymath} 
But:
\begin{displaymath} 
\| v\, ( f \circ u - h \circ u)\|_{{\mathcal C} (r \T)} \leq \frac{1}{\sqrt{1 - r^2}} \, \| v \, ( f \circ u - h \circ u)\|_{H^2} \, ;
\end{displaymath} 
Hence:
\begin{displaymath} 
\| T f - T h\|_{H^2} \geq \alpha\, \sqrt{1 - r^2} \, \delta \, \Gamma^n \geq \alpha\, \sqrt{1 - r} \, \delta \, \Gamma^n\, .
\end{displaymath} 
Since $h$ is an arbitrary function of $E$, we get ($B_{H^2}$ being the unit ball of $H^2$):
\begin{displaymath} 
\inf_{\dim E < n} \raise -2pt \hbox{$\bigg[$} \sup_{f \in B_{H^2}} {\rm dist}\, \big(T f,  T (E) \big) \raise -2pt \hbox{$\bigg]$}
\geq \alpha\, \sqrt{1 - r} \, \delta \, \Gamma^n \, .
\end{displaymath} 
But the left-hand side is equal to the Kolmogorov number $d_n (T)$ of $T$ (see \cite{LQR}, Lemma~3.12), and, as recalled in \eqref{egalite}, in Hilbert 
spaces, the Kolmogorov numbers are equal to the approximation numbers; hence we obtain:
\begin{equation} 
\qquad a_n (T) \geq \alpha\, \sqrt{1 - r} \, \delta \, \Gamma^n \, , \quad n = 1, 2, \ldots \, ,
\end{equation} 
as announced.
\end{proof}

The next lemma shows that some Blaschke products are far away from $0$ on some circles centered at $0$.
\smallskip

We consider a \emph{strongly interpolating sequence} $(z_j)_{ j\geq 1}$ of $\D$ in the sense that, if $\varepsilon_j := 1 - |z_j|$, then:
\begin{equation} \label{regu}
\varepsilon_{j+1}\leq \sigma \,\varepsilon_j 
\end{equation}
and so $\varepsilon_j \leq \sigma^{j - 1} \varepsilon_1$, where $0 < \sigma  < 1$ is fixed. Equivalently, the sequence 
$(|z_j|)_{j \geq 1}$ is interpolating.  We consider the corresponding interpolating Blaschke product:
\begin{equation} \label{Blaschke}
B (z) = \prod_{j = 1}^\infty \frac{|z_j|}{z_j} \frac{z_{j} - z}{1 - z_{j} z} \, \cdot
\end{equation} 

The following lemma is probably well-known, but we could find no satisfactory reference (see yet \cite{HAY} for related estimates) and provide a simple proof.
\begin{lemma} \label{Lemma Blaschke}
Let $(z_j)_{j\geq 1}$ be a strongly interpolating sequence as in \eqref{regu} and $B$ the associated Blaschke product \eqref{Blaschke}. \par

Then there exists a sequence $r_l := 1 - \rho_l$ such that: 
\begin{equation} \label{encadrement rho}
C_1 \, \sigma^l\leq \rho_l \leq  C_2 \, \sigma^l \, , 
\end{equation} 
where $C_1$, $C_2$ are positive constants, and for which:
\begin{equation} \label{mino Blaschke} 
|z| = r_l \quad \Longrightarrow  \quad |B (z)|\geq \delta \, ,
\end{equation} 
where $\delta > 0$ does not depend on $l$.
\end{lemma}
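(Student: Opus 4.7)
The plan is to select $r_l=1-\rho_l$ in the logarithmic gap between consecutive moduli $|z_j|$ while keeping $\rho_l$ of the prescribed scale $\sigma^l$, and then to bound $|B(z)|$ from below by controlling both each factor $|\rho(z,z_j)|$ individually and the total defect $\sum_j\bigl(1-|\rho(z,z_j)|^2\bigr)$.

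First, I would record two uniform-in-$\arg z$ estimates on the circle $|z|=r_l$. From the identity $|1-\bar z_j z|^2=|z-z_j|^2+(1-|z|^2)(1-|z_j|^2)$ together with $|z-z_j|\ge\bigl||z|-|z_j|\bigr|=|\varepsilon_j-\rho_l|$, it follows that
\[
|\rho(z,z_j)|^2\;\ge\;\frac{(\varepsilon_j-\rho_l)^2}{(\varepsilon_j-\rho_l)^2+(1-r_l^2)(1-|z_j|^2)}\;\ge\;\frac{(\varepsilon_j-\rho_l)^2}{(\varepsilon_j+\rho_l)^2}\,,
\]
while the lower bound $|1-\bar z_j z|\ge 1-|z||z_j|\ge\max(\rho_l,\varepsilon_j)$ (valid for $\rho_l,\varepsilon_j\le 1$) gives
\[
1-|\rho(z,z_j)|^2\;=\;\frac{(1-|z|^2)(1-|z_j|^2)}{|1-\bar z_j z|^2}\;\le\;\frac{4\min(\varepsilon_j,\rho_l)}{\max(\varepsilon_j,\rho_l)}\,.
\]

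Second, I pin down $\rho_l$ by working with $\nu_j=\log(\varepsilon_1/\varepsilon_j)\ge 0$ and $\beta=\log(1/\sigma)$. The hypothesis \eqref{regu} reads $\nu_{j+1}-\nu_j\ge\beta$. Choose $c_0=\sigma^{-1/4}$, so $\log c_0=\beta/4$, and call $[\nu_j-\beta/4,\nu_j+\beta/4]$ the \emph{forbidden} interval at $\nu_j$. In the target window $[l\beta-\beta/2,\,l\beta+\beta/2]$ of length $\beta$, the $\beta$-separation of the $\nu_j$'s forces at most one of them inside; its forbidden interval has length $\beta/2$, so an allowed sub-interval of length $\ge\beta/2$ remains. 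Let $\mu_l$ be any point of this allowed part and set $\rho_l=\varepsilon_1 e^{-\mu_l}$. Then $|\mu_l-l\beta|\le\beta/2$ yields $\sqrt\sigma\,\varepsilon_1\,\sigma^l\le\rho_l\le\varepsilon_1\,\sigma^l/\sqrt\sigma$, which is \eqref{encadrement rho}, and $|\mu_l-\nu_j|\ge\beta/4$ for every $j$ reads $\max(\rho_l/\varepsilon_j,\,\varepsilon_j/\rho_l)\ge c_0$. Inserting this in the first display gives the uniform lower bound $|\rho(z,z_j)|\ge(c_0-1)/(c_0+1)=:\eta_0>0$, independent of $j$, $l$ and $\arg z$.

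Third, I bound the total defect using the second display. Let $N_l$ be the largest $j$ with $\varepsilon_j\ge\rho_l$; iterating $\varepsilon_{m+1}\le\sigma\varepsilon_m$ in both directions gives $\varepsilon_{N_l-k}\ge\rho_l/\sigma^k$ for $0\le k\le N_l-1$ and $\varepsilon_{N_l+1+k}\le\sigma^k\rho_l$ for $k\ge 0$, so
\[
\sum_j\bigl(1-|\rho(z,z_j)|^2\bigr)\;\le\;4\sum_{j\le N_l}\frac{\rho_l}{\varepsilon_j}+4\sum_{j>N_l}\frac{\varepsilon_j}{\rho_l}\;\le\;\frac{8}{1-\sigma}\,,
\]
independently of $l$ and of the angular position of $z$. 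Finally, since $|\rho(z,z_j)|^2\ge\eta_0^2$, the elementary inequality $|\log(1-x)|\le x/\eta_0^2$ on $[0,1-\eta_0^2]$ produces
\[
\bigl|\log|B(z)|^2\bigr|\;\le\;\eta_0^{-2}\sum_j\bigl(1-|\rho(z,z_j)|^2\bigr)\;\le\;\frac{8}{\eta_0^2(1-\sigma)}\,,
\]
hence $|B(z)|\ge\delta:=\exp\bigl(-4/(\eta_0^2(1-\sigma))\bigr)>0$ uniformly on $|z|=r_l$ and in $l$.

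The only delicate point is the second step: $\mu_l$ must simultaneously lie close to $l\beta$ (to secure the scale $\rho_l\approx\sigma^l$ of \eqref{encadrement rho}) and be separated from every $\nu_j$ (to secure the uniform factor-wise lower bound on $|\rho(z,z_j)|$). The hypothesis $\nu_{j+1}-\nu_j\ge\beta$ leaves exactly enough room, the choice $c_0=\sigma^{-1/4}$, for which $2\log c_0=\beta/2<\beta$, being one of infinitely many that work; all resulting constants $C_1,C_2,\delta$ depend only on $\sigma$ (and on $\varepsilon_1$).
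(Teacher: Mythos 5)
Your proof is correct. It rests on the same two facts as the paper's proof --- a per-factor lower bound equivalent to $\rho(1-u,1-v)\ge |u-v|/(u+v)$, and the geometric decay of the ratios $\min(\varepsilon_j,\rho_l)/\max(\varepsilon_j,\rho_l)$ on either side of the crossover index --- but organizes them differently. Where the paper picks $\rho_l$ through a two-case analysis on whether $\varepsilon_{p_l}\ge 2\,\sigma^{l-1}\varepsilon_1$ and then bounds the two partial products directly by convergent infinite products of the form $\prod_k \frac{1-c\,\sigma^k}{1+c\,\sigma^k}$, you select $\rho_l$ by a pigeonhole argument on forbidden intervals in logarithmic coordinates and then control the sum of hyperbolic defects $\sum_j \big(1-|\rho(z,z_j)|^2\big)$, converting it into a product bound via $-\log(1-x)\le x/\eta_0^2$. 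The two routes are equivalent in substance (taking logarithms of the paper's products yields precisely your defect sums), but your selection of $\rho_l$ is cleaner and handles all cases uniformly, at the price of needing the extra uniform per-factor bound $\eta_0$ to legitimize the logarithmic inequality. One small imprecision in the pigeonhole step: the window $[l\beta-\beta/2,\,l\beta+\beta/2]$ can meet the forbidden intervals of \emph{two} points $\nu_j$ (including points lying just outside the window), not ``at most one''; nevertheless two such intervals, each of length $\beta/2$, can never tile the whole window, since that would force $\nu_{j+1}-\nu_j=\beta/2<\beta$, so an allowed $\mu_l$ always exists and your argument goes through unchanged.
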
 
\begin{proof} 
Let us denote by $p_l$, $1\leq p_l \leq l$, the biggest integer such that $\varepsilon_{p_l} \geq \sigma^{l - 1}\varepsilon_1$. 

We separate two cases.
\smallskip

\noindent{\bf Case 1:} $\varepsilon_{p_l}\geq 2 \, \sigma^{l - 1}\varepsilon_1$. 

Then, we choose $\rho_l = \alpha \, \sigma^{l - 1}\varepsilon_1$ with $\alpha$ fixed, $1< \alpha < 2$.
Since $\rho (\xi, \zeta)\geq \rho(|\xi|,|\zeta|)$ for all $\xi, \zeta \in \D$ (recall that $\rho$ is the pseudo-hyperbolic distance on $\D$), we have the 
following lower bound for $|z|= r_l$:
\begin{displaymath} 
|B (z)| =\prod_{j = 1}^\infty \rho (z, z_j) \geq  \prod_{j = 1}^\infty \rho (r_l, |z_j|) 
= \prod_{j \leq p_l} \rho (r_l, |z_j|) \times \prod_{j > p_l} \rho (r_l, |z_j|) := P_1 \times P_2 \, ,
\end{displaymath} 
and we estimate $P_1$ and $P_2$ separately.

We first observe that $\dis \frac{\rho_l}{\varepsilon_{p_l}}\leq 
\frac{\alpha \, \sigma^{l - 1}\varepsilon_1}{2 \, \sigma^{l - 1}\varepsilon_1}\leq \frac{\alpha}{2}\,$,  
and then:  
\begin{displaymath} 
\frac{\rho_l}{\varepsilon_{j}} = \frac{\rho_l}{\varepsilon_{p_l}} \frac{\varepsilon_{p_l}}{\varepsilon_{j}}\leq  \frac{\alpha}{2} \, \sigma^{p_l - j}. 
\end{displaymath} 
The inequality $\rho (1 - u, 1 - v) \geq \frac{|u - v|}{(u + v)}$ for $0 < u, v \leq 1$ now gives us:
\begin{equation} \label{un} 
\quad \rho(r_l, |z_j|) \geq\frac{\varepsilon_j - \rho_{l}}{\varepsilon_j + \rho_{l}} 
=\frac{1 - \rho_l/\varepsilon_j}{1 + \rho_{l}/\varepsilon_j} 
\geq\frac{1 - (\alpha/2) \, \sigma^{p_l - j}}{1 + (\alpha/2) \, \sigma^{p_{l}-j}} \, \raise 1 pt \hbox{,} \quad \text{for } j\leq p_l \, ,
\end{equation}
and:
\begin{equation} \label{uun} 
P_1 \geq \prod_{k = 0}^\infty \bigg( \frac{1 - (\alpha/2) \, \sigma^{k}}{1 + (\alpha/2) \, \sigma^{k}} \bigg) \, \cdot 
\end{equation}
Similarly: 
\begin{displaymath} 
\frac{\varepsilon_{p_{l} + 1}}{\rho_{l}}\leq \frac{\sigma^{l - 1}\varepsilon_1}{\alpha \, \sigma^{l - 1} \varepsilon_1} \leq \frac{1}{\alpha} 
\end{displaymath} 
and:
\begin{displaymath} 
\qquad \quad \frac{\varepsilon_{j}}{\rho_{l}}\leq \frac{1}{\alpha} \, \sigma^{j - p_{l} - 1} \quad \text{for }   j > p_l \, ;
\end{displaymath} 
so that:
\begin{equation} \label{deux} 
\quad \rho (r_l, |z_j|) \geq \frac{\rho_l - \varepsilon_j}{\rho_l + \varepsilon_j} 
= \frac{1 - \varepsilon_j/\rho_l}{1+\varepsilon_j/\rho_l} 
\geq \frac{1 - \alpha^{- 1} \sigma^{j - p_{l} - 1}}{1 + \alpha^{-1} \sigma^{j - p_{l} - 1}} \, \raise 1 pt \hbox{,} \quad \text{for } j > p_l \, ,
\end{equation}
and
\begin{equation} \label{deeux} 
P_2 \geq \prod_{k = 0}^\infty \bigg( \frac{1 - \alpha^{- 1} \sigma^{k}}{1 + \alpha^{-1} \sigma^{k}} \bigg) \, \cdot 
\end{equation}
Finally, the condition of lower and upper bound for $\rho_l$ is fulfilled by construction.
\smallskip\goodbreak
 
\noindent{\bf Case 2:} $\varepsilon_{p_l} \leq 2 \, \sigma^{l - 1}\varepsilon_1$.

Then, we choose $\rho_l = a \, \varepsilon_{p_l}$ with $\sigma < a < 1$ fixed. Computations exactly similar to those of Case~1 give us:
\begin{equation} \label{trois} 
|B (z)| 
\geq \prod_{k = 0}^\infty \bigg(\frac{1 - a \, \sigma^{k}}{1 + a \, \sigma^{k}}\bigg) \times 
\prod_{k = 0}^\infty \bigg( \frac{1 - a^{-1} \sigma^{k}}{1 + a^{- 1} \sigma^{k}}\bigg) =: \delta > 0 \, , \quad \text{for } |z| = r_l  \, .  
\end{equation}
Moreover, in this case:
\begin{displaymath} 
a \, \sigma^{l - 1}\varepsilon_1 \leq \rho_l \leq 2 \, a \, \sigma^{l - 1}\varepsilon_1 \, ,
\end{displaymath} 
and the proof is ended.
\end{proof}

\medskip
Now, we have the following estimation. \goodbreak
\begin{theorem} \label{Theo general minoration}
Let $\phi, \psi \colon \D \to \D$ be two non-constant analytic self-maps and $\Phi (z_1, z_2) = \big( \phi (z_1), \psi (z_1) \, h (z_2)\big)$, where $h$ is 
inner.  


Let $(r_l)_{l \geq 1}$ be an increasing sequence of positive numbers with limit $1$ such that:
\begin{displaymath} 
\inf_{|z| = r_l} |\psi (z) | \geq \delta_l > 0  \, ,
\end{displaymath} 
with $\delta_l \leq \e^{- 1 / \capa (A_l)}$, where $A_l = \phi \big( r_l  \overline{\D} \big)$. \par

Then the approximation numbers $a_N (C_\Phi)$,  $N \geq 1$, of the composition operator $C_\Phi \colon H^2 (\D^2) \to H^2 (\D^2)$ satisfy:
\begin{equation} \label{estim inf}
a_N (C_\Phi) \gtrsim 
\sup_{l \geq 1} \Big[ \sqrt{1 - r_l} \, \exp \big( - 8 \, \sqrt{N} \, \sqrt{\log ( 1 / \delta_l)}  \, \sqrt {\log (1 / \Gamma_l} \, \, \big) \Big] \, ,
\end{equation} 
where:
\begin{equation} 
\Gamma_l = \e^{- 1 / \capa (A_l)} \, .
\end{equation} 
\end{theorem}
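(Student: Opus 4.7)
The plan is to exploit the triangular structure of $\Phi$ to reduce the estimation of $a_N(C_\Phi)$ to a problem about direct sums of weighted composition operators on $H^2(\D)$, and then to invoke Lemmas~\ref{lemme utile} and \ref{mino capa}. Since $h$ is inner (vanishing at the origin), the system $\bigl(h(z_2)^k\bigr)_{k \geq 0}$ is orthonormal in $H^2(\D)$, so the map $W \colon \bigoplus_{k \geq 0} H^2(\D) \to H^2(\D^2)$ defined by $W(g_k) = \sum_k g_k(z_1)\, h(z_2)^k$ is an isometry. Combined with the canonical isometric isomorphism $J \colon H^2(\D^2) \to \bigoplus_k H^2(\D)$ from Subsection~\ref{general facts}, the computation already displayed there yields the factorization $C_\Phi = W\, T\, J$, where $T = \bigoplus_k T_k$ with $T_k = M_{\psi^k} C_\phi$. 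Since $W^\ast W = I$ and $J J^{-1} = I$, one has $T = W^\ast C_\Phi J^{-1}$, so $a_N(T) \leq a_N(C_\Phi)$, and it suffices to bound $a_N(T)$ from below.

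Fix $l \geq 1$ and positive integers $K, n$ to be chosen. Applying Lemma~\ref{lemme utile} to $T$ with $m_k = k - 1$ and $n_k = n$ for $k = 1, \ldots, K$ yields
\begin{displaymath}
a_{Kn}(T) \geq \inf_{1 \leq k \leq K} a_n(T_{k - 1}).
\end{displaymath}
For each such $k$, I would then apply Lemma~\ref{mino capa} to the weighted composition operator $T_{k-1} = M_{\psi^{k-1}} C_\phi$ with $u = \phi$, $v = \psi^{k-1}$, $r = r_l$, and $\delta = \delta_l^{k-1} \leq \inf_{|z| = r_l} |\psi^{k-1}(z)|$, obtaining $a_n(T_{k-1}) \gtrsim \sqrt{1 - r_l}\, \delta_l^{k - 1}\, \Gamma_l^n$, with an absolute implied constant independent of $k$. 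Since $\delta_l \leq 1$, the infimum is attained at $k = K$, hence
\begin{displaymath}
a_{Kn}(C_\Phi) \gtrsim \sqrt{1 - r_l}\, \exp\bigl[- (K - 1) \log(1/\delta_l) - n \log(1/\Gamma_l)\bigr].
\end{displaymath}
For an arbitrary $N$, choosing $K, n$ with $Kn \geq N$ yields $a_N(C_\Phi) \geq a_{Kn}(C_\Phi)$, reducing the problem to an optimization.

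The final step is to balance the two competing exponents, i.e.\ to minimize $(K - 1)\log(1/\delta_l) + n \log(1/\Gamma_l)$ subject to $Kn \geq N$. The continuous AM--GM optimum is $2\sqrt{N \log(1/\delta_l) \log(1/\Gamma_l)}$, attained at $K/n = \log(1/\Gamma_l)/\log(1/\delta_l)$. Taking the integer choice $K = \lceil \sqrt{N \log(1/\Gamma_l)/\log(1/\delta_l)}\, \rceil$ and $n = \lceil \sqrt{N \log(1/\delta_l)/\log(1/\Gamma_l)}\, \rceil$ ensures $Kn \geq N$; the hypothesis $\delta_l \leq \Gamma_l$ (equivalently $\log(1/\delta_l) \geq \log(1/\Gamma_l)$) forces $K \leq n$ and allows the additive error terms produced by the integer ceilings to be absorbed into the main term, yielding the constant $8$ in \eqref{estim inf}. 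Taking the supremum over $l$ finishes the proof. The most delicate step is precisely this last one: the integer rounding and the discrepancy between $K - 1$ and $K$ have to be tracked carefully enough that the resulting estimate is uniform in $l$ and in $N$.
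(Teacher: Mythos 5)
Your proof is correct and follows essentially the same route as the paper's: reduce $C_\Phi$ to the orthogonal sum $T=\bigoplus_k M_{\psi^k}C_\phi$ via the innerness of $h$, bound each block from below with Lemma~\ref{mino capa}, combine the blocks with Lemma~\ref{lemme utile}, and balance the two competing exponents using $\delta_l\leq\Gamma_l$. The only (harmless) difference is your allocation of equal block sizes $n_k=n$ with $N=Kn$, where the paper takes $n_k=p_l\,k$ with $N=p_l\,K(K+1)/2$; both choices give an exponent of order $\sqrt{N\,\log(1/\delta_l)\,\log(1/\Gamma_l)}$ well within the stated constant $8$.
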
 
\begin{proof}
Since $h$ is inner, the sequence $(h^k)_{k \geq 0}$ is orthonormal in $H^2$ and hence $a_n (C_\Phi) = a_n (T)$ for all $n \geq 1$, where 
$T = \bigoplus_{k = 0}^\infty T_k$ and $T_k = M_{\psi^k} C_\phi$. Then Lemma~\ref{mino capa} gives:
\begin{equation} 
a_n (T_k) \gtrsim \sqrt{1 - r_l} \, \delta_l^k \Gamma_l^n
\end{equation} 
for all $n \geq 1$ and all $k \geq 0$.

Let now: 
\begin{equation} 
p_l =  \bigg[ \frac{\log (1 /\delta_l )}{\log (1/\Gamma_l)} \bigg]  \,,
\end{equation} 
where $[\, . \,]$ stands for the integer part, and: 
\begin{equation} 
\hskip 90 pt n_k = p_l  k \, , \hskip 45 pt \text{for} \quad  k = 1, \ldots, K \,. \qquad
\end{equation} 
By Lemma~\ref{lemme utile}, applied with $m_k = k$ (i.e. to $H_1, \ldots, H_K$), we have, if $N = n_1 + \cdots + n_K$:
\begin{displaymath} 
a_N (T) \geq \inf_{1 \leq k \leq K} \alpha\, \sqrt{1 - r_l} \, \delta_l^k \, \Gamma_l^n = \alpha \, \sqrt{1 - r_l} \, \delta_l^K \, \Gamma_l^{n_K} \, .
\end{displaymath} 
But, since $p_l \leq \log (1 / \delta_l) / \log (1 / \Gamma_l)$:
\begin{align*}
\delta_l^K \, \Gamma_l^{n_K} 
& = \exp\big[ - \big( K \log (1 / \delta_l) + p_l K \log(1 / \Gamma_l) \big) \big] 
\geq \exp [ - 2 K \log (1 / \delta_l) ] \, .
\end{align*}
Since:
\begin{displaymath} 
N = p_l \frac{K (K +1)}{2} \geq p_l \frac{K^2}{4} \geq \frac{K^2}{16} \, \frac{\log (1 / \delta_l)}{\log (1 / \Gamma_l)} \, \raise 1,5 pt \hbox{,}
\end{displaymath} 
we get:
\begin{displaymath} 
\delta_l^K \, \Gamma_l^{n_K} \geq \exp \big[ - 8 \, \sqrt{N} \, \sqrt{\log (1 / \delta_l)} \sqrt{ \log (1 / \Gamma_l)} \big] \, ,
\end{displaymath} 
and the result ensues.
\end{proof}
\medskip

\noindent{\bf Example 1.} We take $\phi = \lambda_\theta$, a lens map, and $\psi = B$, a Blaschke product associated to a strongly regular sequence, as 
defined in \eqref{Blaschke}; then we get:
\begin{theorem} \label{Prop mino lens}
Let $\Phi \colon \D^2 \to \D^2$ be defined by:
\begin{displaymath} 
\Phi (z_1, z_2) = \big(\lambda_\theta (z_1) , c\, B (z_1)\, h (z_2) \big) \, , 
\end{displaymath} 
where $B$ is a Blaschke product as in \eqref{Blaschke}, $0 < c < 1$, and $h$ is an arbitrary inner function, we have, for some positive constant $b$, for all 
$N \geq 1 $:
\begin{equation} 
\qquad a_N (C_\Phi) \gtrsim \exp ( - b \, N^{1/3}) = \exp ( - b \, \sqrt{N} / N^{1/6} ) \, .
\end{equation} 
In particular $\beta_2 (C_\Phi) = \beta_2^\pm (C_\Phi) = 1$. 
\end{theorem}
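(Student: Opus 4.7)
The plan is to apply Theorem~\ref{Theo general minoration} with $\phi=\lambda_\theta$ and $\psi=cB$, using the circles provided by Lemma~\ref{Lemma Blaschke}. That lemma produces a sequence $r_l=1-\rho_l$ with $\rho_l\approx\sigma^l$ on which $|B(z)|\geq\delta$; hence $|\psi(z)|\geq c\delta$ there, and I set $\delta_l:=c\delta$, a positive constant independent of $l$. The function $h$ is inner, as required by Theorem~\ref{Theo general minoration}.

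The key technical ingredient is a lower bound on the Green capacity of the lens sub-image,
\[
\capa\bigl(\lambda_\theta(r\overline{\D})\bigr) \gtrsim \log\!\Big(\frac{1}{1-r}\Big)\qquad \text{as }r\to 1^-,
\]
which via $1-r_l\approx\sigma^l$ yields $\capa(A_l)\gtrsim l$, i.e.\ $\log(1/\Gamma_l)=1/\capa(A_l)\lesssim 1/l$. The most direct way to obtain this is a potential-theoretic construction in the lens-shaped domain: $A_l$ contains the real segment $[\lambda_\theta(-r_l),\lambda_\theta(r_l)]$, whose endpoint approaches $1$ at Euclidean distance $\approx(1-r_l)^{\theta}$, and one places on it an explicit comparison measure whose Green potential is bounded by a multiple of $1/l$ (related capacity computations appear in the authors' earlier work). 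Along the way one sees $\capa(A_l)\to\infty$, hence $\Gamma_l\to 1$, so that the admissibility condition $\delta_l\leq\Gamma_l$ of Theorem~\ref{Theo general minoration} holds for all $l\geq l_0$.

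With these inputs, Theorem~\ref{Theo general minoration} gives, for $l\geq l_0$,
\[
a_N(C_\Phi)\gtrsim \sigma^{l/2}\exp\bigl(-C\sqrt{N/l}\,\bigr),
\]
so that $-\log a_N(C_\Phi)\lesssim l+\sqrt{N/l}$. Balancing the two contributions by choosing $l\approx N^{1/3}$ makes each of order $N^{1/3}$, yielding $a_N(C_\Phi)\gtrsim \exp(-bN^{1/3})$. Since $[a_N(C_\Phi)]^{1/\sqrt N}\gtrsim\exp(-bN^{-1/6})\to 1$, we conclude $\beta_2^-(C_\Phi)=\beta_2^+(C_\Phi)=1$. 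The main obstacle is the capacity lower bound on $\lambda_\theta(r\overline{\D})$; once it is in hand, the remainder is routine optimization of an explicit two-term expression.
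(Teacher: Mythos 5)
Your proposal follows essentially the same route as the paper: Theorem~\ref{Theo general minoration} applied with $\phi=\lambda_\theta$ and $\psi=cB$, the circles of Lemma~\ref{Lemma Blaschke} giving the constant lower bound $\delta_l=c\delta$, the capacity estimate $\capa (A_l)\gtrsim l$ (equivalently $\log(1/\Gamma_l)\lesssim 1/l$), and the balancing choice $l\approx N^{1/3}$. The only difference is that the capacity lower bound you identify as the main obstacle is obtained in the paper with no new potential-theoretic construction: one simply notes ${\rm diam}_\rho (A_l)\geq \lambda_\theta (r_l)\gtrsim 1-(1-r_l)^\theta$ and invokes Theorem~3.13 of \cite{LQR}, which bounds the Green capacity from below in terms of the pseudo-hyperbolic diameter.
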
 

\noindent{\bf Remark.} We saw in Theorem~\ref{majo lens} that this is the exact size, since we have: $a_N (C_\Phi) \lesssim \e^{- \beta \, N^{1/3}}$. 
\begin{proof}
By Lemma~\ref{Lemma Blaschke}, there is a sequence of numbers $r_l \approx \sigma^l$ such that $|B (z) | \geq \delta$ for $|z| = r_l$, where $\delta$ is a 
positive constant (depending on $\sigma$). Since $\lambda_\theta (0) = 0$, we have:
\begin{displaymath} 
{\rm diam}_\rho (A_l) \geq \lambda_\theta (r_l) \gtrsim 1 - (1 - r_l)^\theta \, ;
\end{displaymath} 
hence, by \cite{LQR}, Theorem~3.13, we have:
\begin{displaymath} 
\capa (A_l) \gtrsim \log \frac{1}{1 - r_l} \gtrsim l \, ,
\end{displaymath} 
or, equivalently: $\Gamma_l \geq \e^{ - b / l}$, some some $b > 0$. Then \eqref{estim inf} gives, for all $l \geq 1$ (with another $b$):
\begin{displaymath} 
a_N (C_\Phi) \gtrsim \exp \bigg[ - b \bigg( l + \frac{\sqrt{N} }{\sqrt l} \bigg) \bigg] \, .
\end{displaymath} 
Taking $l = N^{1/3}$, we get the result. 
\end{proof}
\medskip

\noindent {\bf Example 2.} By taking the cusp instead of a lens map, we obtain a better result, close to the extremal one. 
\begin{theorem} 
Let $\Phi (z_1, z_2) = \big( \chi (z_1), c\, B (z_1) \, h (z_2) \big)$, where $\chi$ is the cusp map, $B$ a Blaschke product as in \eqref{Blaschke}, $0 < c < 1$, 
and $h$ an arbitrary inner function. Then, for all $N \geq 1$:
\begin{displaymath} 
a_N (C_\Phi) \gtrsim \e^{ - b \, \sqrt{N} / \sqrt{\log N }} \, .
\end{displaymath} 
In particular $\beta_2 (C_\Phi) = 1$.
\end{theorem}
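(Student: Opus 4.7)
The plan is to apply the general lower bound Theorem~\ref{Theo general minoration} with $\phi=\chi$ and $\psi=cB$, and then to optimize the free parameter~$l$. The whole scheme is parallel to that of Theorem~\ref{Prop mino lens}; the genuinely new ingredient is the Green-capacity estimate for the image of the cusp map, which decays much more slowly than for a lens map.

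Lemma~\ref{Lemma Blaschke} supplies a sequence $(r_l)$ with $1-r_l\approx \sigma^l$ on which $|B(z)|\ge\delta>0$. Taking $\delta_l:=c\delta$, the quantity $\log(1/\delta_l)$ becomes a constant independent of~$l$. It remains to estimate $\capa(A_l)$ for $A_l=\chi(r_l\overline{\D})$. Since $\chi(0)=0\in A_l$, one has ${\rm diam}_\rho(A_l)\ge|\chi(r_l)|$. The core computation is the asymptotic $1-\chi(r)\approx 1/\log\bigl(1/(1-r)\bigr)$ as $r\to 1^-$: unwinding the definition, one linearizes $(z-i)/(iz-1)$ at $z=1$ and selects the branch of the square root with $\sqrt{-1}=i$ picked out by $\chi_0(1)=0$; this gives $\chi_0(r)\approx (1-r)/4$, whence $\chi_2(r)\approx (2/\pi)\log(1/(1-r))$ and $1-\chi(r)=a/\chi_2(r)$ has the claimed order. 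With $1-r_l\approx\sigma^l$ one deduces $1-\chi(r_l)\approx 1/l$, so that $1-{\rm diam}_\rho(A_l)\lesssim 1/l$. By \cite{LQR}, Theorem~3.13,
\[
\capa(A_l)\gtrsim \log\frac{1}{1-{\rm diam}_\rho(A_l)}\gtrsim \log l,
\]
and therefore $\log(1/\Gamma_l)=1/\capa(A_l)\lesssim 1/\log l$.

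Combining this with $\sqrt{1-r_l}\approx \sigma^{l/2}=\e^{-bl}$ (for some $b>0$), formula \eqref{estim inf} produces the parametric lower bound
\[
a_N(C_\Phi)\gtrsim \exp\Bigl(-bl-c'\sqrt{N/\log l}\,\Bigr),\qquad N\ge 1,\ l\ge 2.
\]
The optimum is reached at $l\approx\sqrt{N/\log N}$: then $\log l\approx\tfrac{1}{2}\log N$, and both terms $bl$ and $c'\sqrt{N/\log l}$ are of order $\sqrt{N/\log N}$. This yields $a_N(C_\Phi)\gtrsim\exp\bigl(-b''\sqrt{N/\log N}\bigr)$; taking $N=n^2$ gives $[a_{n^2}(C_\Phi)]^{1/n}\to 1$, hence $\beta_2(C_\Phi)=1$.

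The only step requiring real work (as opposed to bookkeeping) is the slow-decay estimate $1-\chi(r)\approx 1/\log(1/(1-r))$ behind the capacity bound; it is precisely this slow decay that produces the exponent $\sqrt{N/\log N}$, much sharper than the $N^{1/3}$ obtained in Theorem~\ref{Prop mino lens} with a lens map, and that makes the cusp-based example nearly extremal (compare with the matching upper bound of Theorem~\ref{majo cusp}).
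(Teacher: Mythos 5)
Your proof is correct and follows essentially the same route as the paper: apply Theorem~\ref{Theo general minoration} with $\phi=\chi$, $\psi=cB$ and the circles from Lemma~\ref{Lemma Blaschke}, use $1-\chi(r)\approx 1/\log\big(1/(1-r)\big)$ to get $\capa(A_l)\gtrsim\log l$ and $\Gamma_l\geq\e^{-b/\log l}$, and optimize \eqref{estim inf} at $l\approx\sqrt{N/\log N}$. The only difference is that you rederive the cusp asymptotic that the paper simply cites from \cite{LIQURO-Estimates}, Lemma~4.2, and your derivation is sound.
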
 
\noindent{\bf Remark.} We saw in Theorem~\ref{majo cusp} that this is the exact size, since we have: 
$a_N (C_\phi) \lesssim \e^{ - \beta \sqrt{N / \log N}}$.  
\begin{proof}
The proof is the same as that of Proposition~\ref{Prop mino lens}, except that, for the cusp map, we have (note that $\chi (0) = 0$): 
\begin{displaymath} 
{\rm diam}_\rho (A_l) \geq \chi (r_l) \, . 
\end{displaymath} 
But when $r$ goes to $1$:
\begin{displaymath} 
1 - \chi (r) \sim \frac{\pi \, (\sqrt{2} - 1)}{2} \, \frac{1}{\log \big( 1 / (1 - r) \big)} 
\end{displaymath} 
(see \cite{LIQURO-Estimates}, Lemma~4.2). Hence, by \cite{LQR}, Theorem~3.13, again, we have:
\begin{displaymath} 
\capa (A_l) \gtrsim \log \big( \log \big( 1 / (1 - r_l) \big) \, ,
\end{displaymath} 
so $\Gamma_l \geq \e^{ - b / \log l}$. Then, \eqref{estim inf} gives (with another $b$):
\begin{displaymath} 
a_N (C_\Phi) \gtrsim \exp \bigg[ - b \bigg( l + \frac{\sqrt{N}}{\sqrt{\log l}} \bigg) \bigg] \, .
\end{displaymath} 
In taking $l = \sqrt{N / \log N}$, we get the announced result.
\end{proof}
%
\subsection{Upper bounds}

All previous results point in the direction that, if $\Vert \Phi \Vert_\infty = 1$, then however small $a_{n}(C_\Phi)$ is, it will always be larger than 
$\alpha \, \e^{- \beta \varepsilon_n \sqrt n}$ with $\varepsilon_n \to 0^{+}$, as this is the case in dimension one (with $n$ instead of $\sqrt n$). But 
Theorem~\ref{chobou} to follow shows that we cannot hope, in full generality, to get the same result in dimension $d \geq 2$, and that other phenomena await 
to be understood. Here is our main result. It shows that, even for a truly $2$-dimensional symbol $\Phi$, we can have $\Vert\Phi \Vert_\infty = 1$ and 
nevertheless $\beta_{2}^{+} (C_\Phi) < 1$, in contrast to the $1$-dimensional case where \eqref{equiv-dim1} holds.

\begin{theorem} \label{chobou} 
There exist a map $\Phi \colon \D^2\to \D^2$ such that: 
\par\smallskip

$1)$  the composition operator $C_\Phi \colon H^{2}(\D^2)\to  H^{2}(\D^2)$ is bounded and compact;
\par\smallskip

$2)$ we have $\Vert\Phi \Vert_\infty = 1$ and $\Phi$ is truly $2$-dimensional, so that  $\beta_{2}^{-}(C_\Phi) > 0$;
\par\smallskip

$3)$ the singular numbers  satisfy  $a_{n}(C_\Phi) \leq \alpha \, \e^{-\beta \,\sqrt{n}}$ for some positive constants $\alpha, \beta$; in particular 
$\beta_{2}^{+} (C_\Phi) < 1$.
\end{theorem}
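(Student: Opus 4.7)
I would take $\Phi$ in the triangular form of \eqref{triangular-symbol-with-h},
\begin{equation*}
\Phi(z_1,z_2) \;=\; \bigl( \phi(z_1),\, \psi(z_1)\,h(z_2) \bigr)\,,
\end{equation*}
where the role of $\phi$ is to supply $\|\Phi\|_\infty=1$ while still satisfying \eqref{veritas}; the role of $\psi$ is to decay fast enough at $1$ so that Theorem~\ref{special} delivers very fast decay of $a_n(T_k)$ for each block $T_k = M_{\psi^k}C_\phi$; and the role of $h$ is to be inner with $h(0)=0$, so that the decomposition of Section~\ref{general facts} is isometric. Concretely, fix $\theta\in(0,1)$ and a small $c>0$ (to be chosen later), and set
\begin{equation*}
\phi(z) \;=\; \tfrac12\bigl(1+\lambda_\theta(z)\bigr),\qquad \psi(z) \;=\; \exp\!\Big(\!-\frac{c}{(1-z)^\theta}\Big),\qquad h(z) \;=\; z
\end{equation*}
(principal branches). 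Then $\phi$ is univalent, obeys \eqref{veritas}, and $\phi(1)=1$, hence $\|\Phi\|_\infty=1$. The bound $\Re(1-z)^{-\theta}\ge \cos(\theta\pi/2)\,|1-z|^{-\theta}>0$ on $\D$ gives $\psi\in H^\infty$ with $\|\psi\|_\infty=:c_0<1$. The Jacobian $\phi'(z_1)\psi(z_1)$ is nonzero on a nonempty open set, so $\Phi$ is truly $2$-dimensional, hence $\beta_2^-(C_\Phi)>0$ by \cite{BLQR}. Since the powers $h^k=z^k$ are orthonormal in $H^2(\D)$, Section~\ref{general facts} yields $C_\Phi = V T J$ with $J$ the canonical isometry $H^2(\D^2)\to\bigoplus_k H^2(\D)$, $V$ isometric, and $T = \bigoplus_{k\ge 0} T_k$; in particular $C_\Phi$ is bounded and $a_n(C_\Phi)\le a_n(T)$.

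The heart of the argument is the rapid decay of $a_n(T_k)$. Since $\phi$ is univalent, I rewrite $T_k = M_{w_k\circ\phi}\,C_\phi$ with $w_k = \psi^k\circ\phi^{-1}$ on $\phi(\D)$. The pointwise bound on $|\psi|$ above, together with the elementary inequality $|1-\phi(\zeta)|\ge|1-\zeta|^\theta/2^{\theta+1}$ read off from the explicit formula for $\lambda_\theta$, combine to give
\begin{equation*}
|w_k(z)|\;\le\;\exp(-Rk/|1-z|)\qquad (z\in\phi(\D),\ \Re z\ge 0),
\end{equation*}
with $R = c\cos(\theta\pi/2)/2^{\theta+1}>0$. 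Theorem~\ref{special} (applied with its exponent equal to $1$) then yields, for $k\ge 1$ and $n,m\ge 1$,
\begin{equation*}
a_{nm+1}(T_k)\;\lesssim\;\max\bigl[\exp(-an),\,\exp(-Rk\cdot 2^m)\bigr],
\end{equation*}
while for the $k=0$ block $T_0 = C_\phi$, factoring $C_\phi = C_{\lambda_\theta}\,C_{(1+z)/2}$ and invoking \cite{LELIQR}~Theorem~2.1 gives $a_n(C_\phi)\lesssim \exp(-b\sqrt n)$ for some $b>0$.

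I would then invoke Lemma~\ref{majo} with the following calibration. Choose $c$ small enough that $R\le\log(1/c_0)$. Given a large parameter $L$, let $K = \lceil\beta L/\log(1/c_0)\rceil$ (so that $\sup_{k>K}\|T_k\|\le c_0^K\le e^{-\beta L}$), let $n_0 = \lceil(\beta L/b)^2\rceil$ (so that $a_{n_0}(T_0)\lesssim e^{-\beta L}$), and for $1\le k\le K$ let $n=\lceil\beta L/a\rceil$, $m_k = \max\!\bigl(1,\lceil\log_2(\beta L/(Rk))\rceil\bigr)$, $n_k = n\,m_k+1$, so that each $a_{n_k}(T_k)\lesssim e^{-\beta L}$. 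A Stirling-type computation yields $\sum_{k=1}^K m_k\approx K\approx L$, hence $N^\ast := n_0 + \sum_{k=1}^K n_k - K \approx L^2$, and Lemma~\ref{majo} delivers $a_{N^\ast}(T)\lesssim e^{-\beta L}\approx \exp(-\beta'\sqrt{N^\ast})$; monotonicity of the $a_n$ fills the gaps between the $N^\ast$'s. Compactness of $C_\Phi$ is then automatic from $a_n(C_\Phi)\to 0$.

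The main technical obstacle is the interlocking calibration: matching the exponent $\theta$ in $\phi$ to the exponent in the weight $\psi$ so that the bound on $w_k$ has linear $k$-dependence of the form $\exp(-Rk/|1-z|)$, and then balancing the indices in Lemma~\ref{majo} so that the total index budget is exactly $\approx L^2$ rather than some larger power of $L$. Once these two ingredients are fitted together, the rest is a mechanical assembly of the tools developed in Sections~\ref{sec: weighted} and \ref{sec: triangularly}.
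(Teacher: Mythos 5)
Your proposal is correct and follows essentially the same route as the paper: the same $\phi=\frac{1+\lambda_\theta}{2}$, the same triangular symbol with an inner second factor, the orthogonal block model $T=\bigoplus_k M_{\psi^k}C_\phi$, Theorem~\ref{special} applied to each block, and Lemma~\ref{majo} with a total index budget of order $L^2$ yielding $a_{cL^2}(T)\lesssim \e^{-\beta L}$. The one substantive deviation is your choice $\psi(z)=\exp\big(-c/(1-z)^\theta\big)$ in place of the paper's $\psi=w\circ\phi$ with $w(z)=\exp\big[-\big(\tfrac{1+z}{1-z}\big)^\theta\big]$: with your choice the weight $w_k=\psi^k\circ\phi^{-1}$ fed into Theorem~\ref{special} is defined only on $\phi(\D)$, so that theorem does not apply verbatim as stated; this is harmless, since the proof of Theorem~\ref{fish} only ever evaluates the weight on the support of $m_\phi$, contained in $\overline{\phi(\D)}$, but it is precisely the point the paper's composed form $\psi=w\circ\phi$ is designed to sidestep (there the block weight is the globally defined $w^k\in H^\infty(\D)$, at the cost of the exponent degrading from $\theta$ to $\theta^2$, versus your exponent $1$ --- all of which only changes constants). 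Your explicit treatment of the $k=0$ block via $C_\phi=C_{\lambda_\theta}C_{(1+z)/2}$ and \cite{LELIQR} is a detail the paper glosses over, and is handled correctly.
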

\begin{proof}
Let $0 < \theta < 1$ be fixed, and $\lambda_\theta$ be the corresponding lens map. We set:
\begin{displaymath} 
\left\{
\begin{array} {lcll}
& \phi  & =  & \dis \frac{1 + \lambda_\theta}{2} \smallskip \\ 
& w (z) & = & \dis \exp \bigg[ - \bigg( \frac{1+ z}{1 - z}\bigg)^\theta\, \bigg] \smallskip\\  
& \psi & = & w\circ \phi \, .
\end{array}
\right.
\end{displaymath}
Note that $\|\phi \|_\infty = 1$.
\smallskip

Setting $\delta = \cos (\theta\pi/2) > 0$, we have for $z\in \D$:
\begin{equation} \label{choi0}
|1 - \phi (z)|= \frac{1}{2} \, |1 - \lambda_{\theta}(z)| = \bigg|\frac{(1 - z)^\theta}{(1 - z)^\theta + (1 + z)^\theta}\bigg|
\leq \frac{|1 - z|^\theta}{\delta} \, \cdot
\end{equation}
Indeed, the argument $\alpha$ of $(1 \pm z)^\theta$ satisfies $|\alpha| \leq \theta \pi/2$ for $z\in \D$,  and we get:
\begin{displaymath} 
|(1 - z)^\theta + (1 + z)^\theta|\geq \Re [(1 - z)^\theta + (1+ z)^\theta] \geq \delta(|1 + z|^\theta + |1 - z|^\theta) \geq \delta \, .
\end{displaymath} 
We also see that $\phi (\D)$ touches the boundary $\partial \D$ only at $1$ in a non-tangential way, meaning that for some constant $C>1$:
\begin{displaymath} 
\qquad \quad 1 - |\phi (z)| \geq \frac{1}{C} \, |1 - \phi (z)| \, , \quad  \forall z \in \D \, .
\end{displaymath} 

Now, we have the following two inequalities:
\begin{align}
& \Re z \geq 0 \quad \Longrightarrow  \quad |w (z)|\leq \exp \bigg( - \frac{\delta}{|1 - z|^\theta} \bigg)  \label{choi1} \\
& z \in \D \quad \Longrightarrow \quad |\psi (z)| \leq \exp\bigg( - \frac{\delta^{2}}{|1 - z|^{\theta^2}}\bigg) \, . \label{choi2}
\end{align}
Indeed, with $S (z)  = \big( \frac{1+ z}{1 - z}\big)^\theta$, we have $\Re\, S (z) \geq \delta \, |S (z)|\geq \delta |1 - z|^{- \theta}$ when $\Re z\geq 0$, giving \eqref{choi1}, and  \eqref{choi0} and \eqref{choi1} imply, since $\Re \phi (z)\geq 0$:
\begin{displaymath} 
|\psi (z)| = |w \big( \phi (z) \big)|\leq \exp \bigg( - \frac{\delta}{|1 - \phi (z)|^\theta} \bigg) \leq \exp \bigg( - \frac{\delta^{2}}{|1 - z|^{\theta^2}}\bigg) \, \cdot
\end{displaymath} 
\medskip 

We now set:
\begin{equation} 
\Phi (z_1, z_2) = \big(\phi (z_1), \psi (z_1) \, h (z_2) \big) \, ,
\end{equation} 
with $h$ a Rudin function. 
\smallskip

Observe that $\phi \in A(\D)$ and $\psi = w\circ \phi \in A(\D)$ as well ($w \in A(\D)$ with $w (1) = 0$; this is due to the presence of the parameter 
$\theta < 1$). hence if we take for $h$ a finite Blaschke product, the two components  of $\Phi$ are in the bidisk algebra $A (\D^2)$.\par
\smallskip

We have $\| \psi \|_\infty := \rho < 1$. In fact, for $\Re u \geq 0$, we have:
\begin{displaymath} 
\bigg| \frac{1 + u}{1 - u} \bigg| \geq 2^{- \theta} | 1 + u|^\theta \geq 2^{- \theta} (1 + \Re u)^\theta \geq 2^{- \theta} \, ,
\end{displaymath} 
hence:
\begin{displaymath} 
\Re \bigg[ \bigg( \frac{1 + u}{1 - u} \bigg)^\theta \, \bigg] \geq \bigg( \cos \frac{\theta \pi}{2} \bigg) \, \bigg| \frac{1 + u}{1 - u} \bigg|^\theta 
\geq \bigg( \cos \frac{\theta \pi}{2} \bigg) \, 2^{- \theta} = \delta \, 2^{- \theta} \, ,
\end{displaymath} 
and $\| w \circ \phi \|_\infty \leq \e^{2^{- \theta} \delta}$.\par
\smallskip

Now, $1)$ follows from the orthogonal model presented in Section~\ref{general facts}, because $\| \psi \|_\infty < 1$. \par

The assertion $2)$ follows from \cite{BLQR}, Theorem~3.1, since $\| \phi \|_\infty = 1$.\par

We now prove $3)$. 

As observed, $C_\Phi$ can be viewed as a direct sum $T = \bigoplus_{k = 0}^\infty T_k$ acting on a Hilbertian sum $H = \bigoplus_{k = 0}^\infty H_k$, 
where $T_k$ acts on a copy $H_k$ of $H^{2}(\D)$ with:
\begin{displaymath} 
T_k = M_{\psi^{k}} C_\phi \, . 
\end{displaymath} 
We fix the positive integer $n$.  The rest of the proof will consist of three lemmas.

\begin{lemma} \label{rest1}  
We have $\Vert T_k\Vert \leq 2 \, \rho^{- k} \leq 2\, \rho^{ - n}$ for $k > n$. 
\end{lemma}
\begin{proof}
Indeed, since $\phi (0) = 1/2$, we know that $\Vert C_{\phi}\Vert \leq \sqrt{\frac{1 + \phi(0)}{1 - \phi(0)}}=\sqrt{3}\leq 2$, 
so that $\Vert T_k\Vert\leq \Vert \psi^k\Vert_\infty \Vert C_\phi\Vert\leq \rho^{- k}\times 2$.
\end{proof}
\begin{lemma} \label{rest2} 
Set $b = a /\delta^2 $ where $a > 0$ is given by $\e^{- a}=4C /\sqrt{16 C^2+1}$ and $C$ is as in \eqref{veritas}. Let $m_k$ be the smallest integer such that 
$k \, \delta^{2} 2^{m_{k} \theta^2} \geq a n$;  namely:
\begin{equation} \label{smal} 
m_k =\bigg[\frac{\log(b\,n/k)}{\theta^{2}\log 2} \bigg] + 1 \, ,
\end{equation} 
where $[\, . \, ]$ stands for the integer part. Then, with $a'=\min(\log 2,a)$: 
\begin{displaymath} 
a_{n m_k + 1} (T_k) \lesssim \e^{- a'n} \, .
\end{displaymath} 
\end{lemma}
\begin{proof}
This follows from Theorem~\ref{special} applied with $w = \psi^{k}$,  $R = k \, \delta^2$ and $\theta$ changed into $\theta^2$.  This is possible thanks to 
\eqref{choi2} and to Lemma~\ref{rest1}. Moreover we have adjusted $m_k$ so as to make the two terms in Theorem~\ref{special} of the same order.  
\end{proof}
\begin{lemma} \label{rest3} 
The dimension $d :=\sum_{k = 0}^n n \, m_{k}$ satisfies, for some positive constant $\alpha$:
\begin{displaymath} 
d \leq \alpha \, n^2 \, .
\end{displaymath} 
\end{lemma} 
\begin{proof}
Indeed, it is well-known that: 
\begin{displaymath} 
\sum_{k=1}^{n}\log k = n\log n - n + {\rm O}\, (\log n) \, ,
\end{displaymath} 
and, in view of \eqref{smal}, we have $m_k \leq \alpha'_\theta \log (b \, n /k) \leq \alpha''_\theta (\log n - \log k)$; hence:
\begin{displaymath} 
\sum_{k = 1}^n m_k \leq \alpha''_\theta \big[ n \log n - \big(n \log n - n + {\rm O}\, (\log n) \big) \big] 
= \alpha''_\theta \, n + {\rm O}\, (\log n) \, ,
\end{displaymath} 
and we get $d \leq \alpha''_\theta \, n^2 + {\rm O}\, (n \log n) \leq \alpha_\theta \, n^2$.
\smallskip

Alternatively, we could have used a Riemann sum for the function $\log (1/x)$ on $(0, 1]$.
\end{proof}

Finally, putting things together and using as well Proposition~\ref{majo} with $K = n$ and $n_k = n m_k + 1$ so that 
$(\sum_{k = 0}^n n_k) - n = (\sum_{k = 0}^n n \, m_k) + 1 = d + 1$, we get ignoring once more multiplicative constants: 
\begin{displaymath} 
a_{n^2} (T) \lesssim a_{d} (T) \leq \alpha \, \e^{- \beta n} 
\end{displaymath} 
with  positive constants $\alpha$, $\beta$. 
This  ends the proof of Theorem~\ref{chobou}.
\end{proof}
%

\section{Monge-Amp\`ere capacity and applications} \label{sec: capacity}

\subsection{Definition} 

Let $K$ be a compact subset of $\D^m$ (in this section, for notational reasons, we denote the dimension by $m$ instead of $d$). The Monge-Amp\`ere 
capacity of $K$ has been defined by Bedford and Taylor (\cite{BT}; see also \cite{KOSK}, \S~5 or \cite{Klimek}, Chapter~1) as:
\begin{displaymath} 
\capam (K) = \sup \bigg\{ \int_K (dd^c u)^m \, ; \ u \in PSH \text{ and } 0 \leq u \leq 1 \bigg\} \, ,
\end{displaymath} 
where $PSH$ is the set of plurisubharmonic functions on $\D^m$, $dd^c = 2 i \partial \bar\partial$, and $(dd^c)^m = dd^c \wedge \cdots \wedge dd^c$ 
($m$ times). When $u \in PSH \cap\, {\cal C}^2 (\D^m)$, we have:
\begin{displaymath} 
(dd^c u)^m = 4^m m! \det \bigg( \frac{\partial^2 u}{\partial z_j \partial \bar{z}_k} \bigg)\, dV (z) \, , 
\end{displaymath} 
where $dV (z) = (i/2)^m dz_1 \wedge d\bar{z}_1 \wedge \cdots \wedge dz_m \wedge d\bar{z}_m$ is the usual volume in $\C^m$. A more convenient 
formula (because $\D^m$ is bounded and hyperconvex: see \cite{Klimek}, p.~80, for the definition) is:
\begin{displaymath} 
\capam (K) = \int_K (dd^c u_K^\ast)^m \, ,
\end{displaymath} 
where $u_K^\ast$ is called the \emph{extremal function of $K$} and is the upper semi-continuous regularization of:
\begin{displaymath} 
u_K = \sup\{ v \in PSH \, ; \ v \leq 0 \text{ and } v \leq - 1 \text{ on } K\} \, ,
\end{displaymath} 
but we will not need that. 
\par\medskip

As in \cite{ZAK}, we set:
\begin{equation} 
\tau_m (K) = \frac{1}{(2 \pi)^m} \, \capam (K) \ .
\end{equation} 
For $m = 1$, $\tau (K) := \tau_1 (K)$ is equal to the Green capacity $\capa (K)$ of $K$ with respect to $\D$, with the definition used in \cite{LQR} 
(see \cite{KOSK}, Theorem~8.1, where a factor $2 \pi$ is introduced). 
\smallskip

We further set:
\begin{equation} \label{set} 
\Gamma_{m} (K) = \exp \bigg[ - \bigg( \frac{m!}{\tau_m (K)}\bigg)^{1/m}\bigg] \, \cdot 
\end{equation}

We proved in \cite{LQR} that, for $m = 1$, and $\varphi \colon \D \to r\D$, with $0 < r <1$, we have:
\begin{equation} \label{spectral}
\beta_1 (C_\varphi) = \Gamma_1 \big( \overline{\varphi (\D)} \big) \, .
\end{equation} 

The goal of this section is to see that Theorem~\ref{chobou} shows that this no longer holds for $m = 2$.

\subsection{A seminal example}

In one variable, our initial motivation had been the simple-minded example $\varphi (z) = r z$, $0 < r < 1$, for which $C_{\varphi}(z^n) = r^n z^n$, implying 
$a_{n}(C_\varphi) = r^{n - 1}$ and $\beta_1 (C_\varphi) = r$. If $K = \overline{\varphi (\D)} =\overline{D} (0, r)$, we have  
$\capa (K) = \frac{1}{\log 1/r}$ and $\Gamma_1 (K) = r$, so that $\beta_1 (C_\varphi) = \Gamma_1 (K)$. Let us examine the multivariate  example 
(where $0< r_j < 1$): 
\begin{displaymath} 
\Phi (z_1, z_2, \ldots, z_m) = (r_1 z_1, r_2 z_2, \ldots, r_m z_m).
\end{displaymath} 
If $K = \overline{\Phi (\D^m)}$, we have $K = \prod_{k = 1}^m \overline{D} (0, r_k)$, and hence (\cite{Blocki}, Theorem~3):
\begin{equation} \label{thus} 
\tau_m (K) =\prod_{k = 1}^m \frac{1}{\log (1/r_k)} \, \cdot
\end{equation}
On the other hand, 
$C_{\Phi} (z_{1}^{n _1} z_{2}^{n_2}\cdots z_{m}^{n_m}) 
= r_{1}^{n_1} r_{2}^{n_2}\cdots r_{m}^{n_m} \, z_{1}^{n_1} z_{2}^{n_2}\cdots z_{m}^{n_m}$ so that  the sequence $(a_n)_n$ of approximation 
numbers of $C_\Phi$ is the non-increasing rearrangement of the numbers $r_{1}^{n_1} r_{2}^{n_2}\cdots r_{m}^{n_m}$. It is convenient to state the 
following simple lemma.
\begin{lemma} 
Let $\lambda_1, \ldots, \lambda_m$ be positive numbers. Let $N_A$ be the number of $m$-tuples of non-negative integers $(n_1, \ldots, n_m)$ such that 
$\sum_{k = 1}^m \lambda_k n_k\leq A$. Then, as $A\to \infty$:
\begin{displaymath} 
N_A \sim \frac{A^m}{(\lambda_1\cdots \lambda_m)\, m!}  \, \cdot 
\end{displaymath} 
\end{lemma}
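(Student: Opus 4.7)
The plan is to interpret $N_A$ geometrically as the number of lattice points in the simplex
\begin{displaymath}
S_A = \Bigl\{ x \in \R_+^m \,;\ \sum_{k = 1}^m \lambda_k x_k \leq A \Bigr\},
\end{displaymath}
and then compare this count to the Euclidean volume of $S_A$ by a standard Riemann-sum argument. First I would compute $\mathrm{vol}(S_A)$: the linear change of variable $y_k = \lambda_k x_k$ has Jacobian $1/(\lambda_1 \cdots \lambda_m)$ and sends $S_A$ bijectively onto the standard simplex $\{y \in \R_+^m \,;\ \sum y_k \leq A\}$, whose volume is $A^m/m!$, so $\mathrm{vol}(S_A) = A^m / [(\lambda_1 \cdots \lambda_m)\, m!]$.

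Next I would perform a sandwich. Set $L = \lambda_1 + \cdots + \lambda_m$ and, to each lattice point $n = (n_1, \ldots, n_m) \in \N^m \cap S_A$, associate the half-open unit cube $Q_n = n + [0,1)^m$. These cubes are pairwise disjoint, and their union $U_A := \bigcup_{n \in \N^m \cap S_A} Q_n$ has Lebesgue volume exactly $N_A$. If $x \in Q_n$ with $n \in S_A$, then $\sum \lambda_k x_k \leq \sum \lambda_k n_k + L \leq A + L$, whence $U_A \subseteq S_{A + L}$; conversely, for $x \in S_{A-L} \cap \R_+^m$ with $A \geq L$, setting $n_k := \lfloor x_k \rfloor$ gives $\sum \lambda_k n_k \leq \sum \lambda_k x_k \leq A - L \leq A$, so $x \in Q_n \subseteq U_A$. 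Comparing volumes yields
\begin{displaymath}
\frac{(A - L)^m}{(\lambda_1 \cdots \lambda_m)\, m!} \leq N_A \leq \frac{(A + L)^m}{(\lambda_1 \cdots \lambda_m)\, m!},
\end{displaymath}
and dividing by $A^m$ and letting $A \to \infty$ yields the claimed asymptotic.

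There is no real obstacle: this is the classical Gauss-style count of lattice points in a dilating convex body. The only point requiring a moment of attention is the error estimate $\lvert \sum \lambda_k (x_k - n_k) \rvert \leq L$ on the $\lambda$-weighted displacement between a lattice point and any point of its associated unit cube; this is precisely what makes both the upper inclusion $U_A \subseteq S_{A + L}$ and the lower inclusion $S_{A - L} \cap \R_+^m \subseteq U_A$ valid, and the rest is bookkeeping.
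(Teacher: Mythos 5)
Your proof is correct, but it follows a genuinely different route from the paper. The paper disposes of the lemma in one line by applying Karamata's Tauberian theorem to the generalized Dirichlet series $S(\eps) = \prod_{k=1}^m (1 - \e^{-\lambda_k \eps})^{-1} = \sum_{n_1, \ldots, n_m \geq 0} \e^{-(\sum_k \lambda_k n_k)\eps}$, using the elementary asymptotic $S(\eps) \sim \eps^{-m}/(\lambda_1 \cdots \lambda_m)$ as $\eps \to 0^+$; the counting function $N_A$ is then recovered from the behaviour of its Laplace--Stieltjes transform. You instead give the classical Gauss-style lattice-point count: sandwiching the union of unit cubes $U_A$ between the dilated simplices $S_{A-L}$ and $S_{A+L}$, with the exact volume $\mathrm{vol}(S_A) = A^m/[(\lambda_1 \cdots \lambda_m)\, m!]$ computed by a linear change of variables. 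Your argument is entirely self-contained and elementary, avoids invoking a Tauberian theorem, and as a bonus yields an explicit error term $N_A = \mathrm{vol}(S_A) + {\rm O}\,(A^{m-1})$, which is strictly more information than the stated asymptotic; the paper's approach buys brevity at the cost of citing an external result. (One minor remark: in your lower inclusion the shift by $L$ is not even needed, since $n_k = \lfloor x_k \rfloor \leq x_k$ already gives $S_A \cap \R_+^m \subseteq U_A$ directly; but this costs nothing.) Both proofs are valid.
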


Indeed, just apply Karamata's tauberian theorem (see \cite{KOR} p.~30) to the generalized Dirichlet series:
\begin{displaymath} 
S (\varepsilon) := \prod_{k=1}^m \frac{1}{1 - \e^{- \lambda_k \varepsilon}} 
\quad = \sum_{n_1, \ldots, n_m \geq 0} \e^{ - (\sum_{k = 1}^m \lambda_{k} n_k) \, \varepsilon} \, ;
\end{displaymath} 
we have $S (\varepsilon) \sim \frac{\varepsilon^{- m}}{(\lambda_1 \cdots \lambda_m)}$ as $\varepsilon \to 0^{+}$.
\smallskip

Let now $N$ be a positive integer and   $\varepsilon = a_N$. Setting $\lambda_k = \log (1/r_k)$ and $A = \log (1/\varepsilon)$, we see that $N$ is the 
number of $m$-tuples $(n_1,\ldots, n_m)$ of non-negative integers such that $r_{1}^{n_1} r_{2}^{n_2}\cdots r_{m}^{n_m}\geq \varepsilon$, i.e. such that  
$\sum_{k = 1}^m \lambda_k n_k \leq A$. This number $N$ is hence nothing but the number $N_A$ of the previous lemma, so that:
\begin{displaymath} 
N \sim \frac{A^m}{(\lambda_1 \cdots \lambda_m) \, m!}  \, \cdot
\end{displaymath} 
Inverting this formula, we get:
\begin{displaymath} 
a_N (C_\Phi) = \exp \big[ - (1 + {\rm o}\, (1)) \, (m! (\lambda_1 \lambda_2\cdots \lambda_m)\, N)^{1/m}\big]
\end{displaymath} 
and:
\begin{displaymath} 
\beta_{m} (C_\Phi) = \exp\big[ - (m! \lambda_1 \lambda_2\cdots \lambda_m)^{1/m} \big] = \Gamma_{m} (K) \, ,
\end{displaymath} 
in view of \eqref{set} and \eqref{thus}. 
\medskip

On the view of the simple-minded previous example, the extension of the spectral radius formula \eqref{spectral} to the multivariate case holds, and it is 
tempting to conjecture that this is a general phenomenon as in dimension one, all the more as the following extension of Widom's theorem was proved by 
Zakharyuta, based on the solution by S.~Nivoche of Zakharyuta's conjecture (\cite{Nivoche}); see also \cite{ZAK}, Theorem~5.4. A compact subset 
$K$ of $\D^m$ is said to be \emph{regular} if its extremal function $u_K^\ast$ is continuous on $\D^m$.
\begin{theorem} [\cite{ZAK}, Theorem~5.6] \label{Zakha-exact}
Let $K$ be a regular compact subset of $\D^m$ and $J \colon H^\infty (\D^m) \to {\cal C} (K)$ the canonical injection; then the Kolmogorov numbers 
$d_n (J)$ satisfy:
\begin{equation} 
\lim_{n \to \infty} \big[d_n (J) \big]^{1/n^{1 /m}} = \exp \bigg[ - \bigg(\frac{m!}{\tau_m (K)} \bigg)^{1/m} \bigg] \, \cdot
\end{equation} 
\end{theorem}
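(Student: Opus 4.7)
The plan is to transport the one-dimensional Widom argument (Theorem~\ref{widom}) into several complex variables via pluripotential theory, with the pluricomplex extremal function $u_K^\ast$ playing the role that the Green function of $\D$ played in the classical proof. The three quantities that must be reconciled are (i) the Kolmogorov numbers $d_n(J)$, (ii) Chebyshev-type constants measuring the best uniform approximation on $K$ by polynomials of bounded degree, and (iii) the Monge-Amp\`ere quantity $\tau_m(K)$.

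First I would replace approximation by arbitrary finite-dimensional subspaces with approximation by $\mathcal{P}_d$, the space of holomorphic polynomials on $\C^m$ of degree at most $d$. Since $\dim \mathcal{P}_d = \binom{d+m}{m} \sim d^m/m!$, choosing $n \sim d^m/m!$ forces $n^{1/m} \sim d \,(m!)^{-1/m}$, which is exactly the scaling needed to match the exponent $(m!/\tau_m(K))^{1/m}$. For the upper bound on $d_n(J)$, the key tool is the Bernstein--Walsh inequality in $\C^m$: any polynomial $P$ of degree $\leq d$ satisfies $|P(z)| \leq \|P\|_K\, \exp\bigl[d\,u_K^\ast(z)\bigr]$ on $\D^m$, and dually any $f \in B_{H^\infty(\D^m)}$ can be polynomially approximated on $K$ at a rate controlled by $u_K^\ast$. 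After taking $d$-th roots one is led to a Chebyshev-type constant $T_d(K)$ whose decay must be identified with $\exp\bigl[-d\,(m!/\tau_m(K))^{1/m}\,(m!)^{1/m}\bigr]$ at the proper normalization.

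For the lower bound, one constructs many nearly orthogonal polynomial ``bumps'' on $K$, extremal for the same Chebyshev problem, and uses the standard Bernstein-width/Kolmogorov-width duality (as encoded by \eqref{egalite} on the space side and by the definition \eqref{Kolmogorov}) to force $d_n(J)$ to be at least the corresponding Chebyshev constant. The Bedford--Taylor machinery guarantees that $\int (dd^c u_K^\ast)^m = \capam(K)$ is the correct invariant, and a convexity/subadditivity argument on $\log T_d(K)$ shows that the limit $\lim_d T_d(K)^{1/d}$ exists.

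The main obstacle is precisely the identification of this common Chebyshev limit with $(m!/\tau_m(K))^{1/m}$, which is the content of Zakharyuta's conjecture and was settled by S.~Nivoche (\cite{Nivoche}). In one variable this identification is classical (Evans--Selberg, reducing capacity of $K$ to the transfinite diameter), but in several variables there is no analogous direct formula, and one must approximate arbitrary plurisubharmonic competitors in the definition of $\capam$ by logarithms of polynomial moduli. Nivoche's approximation theorem supplies exactly this step, and once it is granted the matching upper and lower estimates assemble into the stated limit. Thus a fair summary is: Bernstein--Walsh supplies the upper bound, a separated-family construction supplies the lower bound, and Nivoche's resolution of Zakharyuta's conjecture supplies the delicate identification of the common rate with the Monge-Amp\`ere invariant $(m!/\tau_m(K))^{1/m}$.
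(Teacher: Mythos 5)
The paper does not prove this statement: it is imported verbatim from Zakharyuta (\cite{ZAK}, Theorem~5.6), whose proof in turn rests on Nivoche's solution of Zakharyuta's conjecture (\cite{Nivoche}); so there is no internal argument to compare yours against. Your outline is a fair reconstruction of the architecture of that external proof --- reduce Kolmogorov widths of $J$ to a polynomial Chebyshev-type problem on $K$, get the upper bound from a Bernstein--Walsh-type estimate, get the lower bound from a separated family via the Bernstein/Kolmogorov width comparison, and invoke Nivoche for the identification of the common rate with $(m!/\tau_m(K))^{1/m}$ --- and you correctly locate the genuinely hard step in Nivoche's theorem rather than pretending it is elementary.

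That said, two points in your sketch would not survive being written out. First, the Bernstein--Walsh inequality as you state it is wrong with the paper's normalization: the relative extremal function satisfies $-1 \leq u_K^\ast \leq 0$ on $\D^m$, so $|P(z)| \leq \|P\|_K \exp\bigl[d\, u_K^\ast(z)\bigr]$ would force $|P| \leq \|P\|_K$ on all of $\D^m$, which is false. The polynomial inequality involves the \emph{global} Siciak--Zakharyuta extremal function with logarithmic growth, whereas the condenser $(K,\D^m)$ is governed by the two-constants theorem built on $u_K^\ast$; reconciling these two normalizations (and justifying that polynomial subspaces are asymptotically optimal among \emph{all} $n$-dimensional subspaces, which the Kolmogorov width requires) is a substantive part of Zakharyuta's argument, not a formality. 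Second, the proposal ultimately reduces the theorem to ``Nivoche plus standard width arguments'' without executing either half; since the statement being proved is essentially equivalent to that combination, this is acceptable as an attribution but is not an independent proof. For the purposes of this paper, citing \cite{ZAK} and \cite{Nivoche}, as the authors do, is the appropriate resolution.
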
 

Note that the right side is nothing but $\Gamma_m (K)$. 
\smallskip

We will see consequences of this result in a forthcoming paper (\cite{LQR-Pluricap}).

\subsection{Upper bound} 

For the upper bound, the situation behaves better, as stated in the following theorem.
\begin{theorem} [\cite{ZAK}, Proposition~6.1] \label{Zakha-upper}
Let $K$ be a compact subset of $\D^m$ with non-void interior. Then:
\begin{equation} 
\limsup_{n \to \infty} \big[ d_n (J) \big]^{1/n^{1/m}} \leq \exp \bigg[ - \bigg( \frac{m!}{\tau_m (K)} \bigg)^{1/m} \bigg] \, .
\end{equation} 
\end{theorem}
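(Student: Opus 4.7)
The plan is to reduce the estimate to a polynomial approximation problem on $K$ and to invoke the pluripotential-theoretic machinery of Bedford--Taylor and Siciak in the form of the Bernstein--Walsh inequality relative to the polydisk. Write $\Gamma = \Gamma_m(K) = \exp\bigl[-(m!/\tau_m(K))^{1/m}\bigr]$ and let $\mathcal{P}_N \subseteq H^\infty (\D^m)$ denote the subspace of polynomials of total degree $\leq N$ in $(z_1, \ldots, z_m)$. Its dimension is $D_N := \binom{N + m}{m} \sim N^m / m!$, so if I can show
\begin{equation*}
\sup_{f \in B_{H^\infty(\D^m)}} \mathrm{dist}_{\mathcal{C}(K)} (Jf, J(\mathcal{P}_N)) \leq \Gamma^{N (1 + {\rm o}\, (1))} ,
\end{equation*}
then by the definition of the Kolmogorov numbers $d_{D_N}(J) \leq \Gamma^{N(1 + {\rm o}\, (1))}$, and since $N \sim (m! \, D_N)^{1/m}$ and $d_n(J)$ is non-increasing, interpolating to general $n$ yields the announced $\limsup$.

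For the polynomial-approximation step, I would first reduce to the case that $K$ is regular: by outer regularity of the Monge-Amp\`ere capacity, $K$ can be enclosed in regular compacta $K_\varepsilon \supset K$ with $\tau_m (K_\varepsilon) \to \tau_m (K)$, and since $\mathrm{dist}_{\mathcal{C}(K)} (\,\cdot\,,\,\cdot\,) \leq \mathrm{dist}_{\mathcal{C}(K_\varepsilon)} (\,\cdot\,,\,\cdot\,)$, the estimate for $K_\varepsilon$ transfers to $K$ in the limit. Assuming $K$ regular, the extremal function $u_K^\ast$ is continuous on $\D^m$, vanishes on $\partial \D^m$, equals $-1$ on $K$, and satisfies $\int_{\D^m} (dd^c u_K^\ast)^m = \mathrm{Cap}_m (K)$. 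The classical Bernstein--Walsh--Siciak inequality on $\D^m$ (the relative version) then gives, for every polynomial $P \in \mathcal{P}_N$,
\begin{equation*}
| P (z) | \leq \| P \|_{\mathcal{C}(K)} \cdot \exp \bigl[ N (1 + u_K^\ast (z)) \bigr] \, , \qquad z \in \D^m \, .
\end{equation*}
Using this on the boundary and a partial Taylor expansion of $f \in B_{H^\infty(\D^m)}$ (or, more robustly, a Chebyshev-type extremal polynomial construction whose asymptotics are governed by $u_K^\ast$), one produces $P_N (f) \in \mathcal{P}_N$ with
\begin{equation*}
\| f - P_N (f) \|_{\mathcal{C}(K)} \leq C \, \Gamma^{N (1 + {\rm o}\, (1))} \, ,
\end{equation*}
where the exponent $\Gamma$ is identified via the Monge-Amp\`ere mass $\int (dd^c u_K^\ast)^m = (2\pi)^m \tau_m (K)$.

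The main obstacle is precisely the sharp identification of the rate $\Gamma$ as $\exp \bigl[ -(m!/\tau_m(K))^{1/m}\bigr]$. In one variable this is simply Widom's theorem (Theorem~\ref{widom}), but in several variables the corresponding statement rests on the resolution of Zakharyuta's conjecture by S.~Nivoche, which asserts that the joint Chebyshev constants of $K$ converge to the right $m$-th root of the total Monge-Amp\`ere mass. Without this deep input, one can still obtain an upper bound of the form $\exp (-c N)$ with some $c > 0$ depending on $K$, but matching $c$ exactly with $(m!/\tau_m(K))^{1/m}$ requires the pluripotential machinery that makes the Monge-Amp\`ere capacity the natural invariant. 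Once this sharp Bernstein--Walsh type estimate is established on regular $K$, the proof concludes by combining it with the dimension count $D_N \sim N^m/m!$ and the capacity-continuity reduction described above.
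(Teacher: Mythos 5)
First, note that the paper does not prove this statement at all: it is quoted verbatim from Zakharyuta (\cite{ZAK}, Proposition~6.1) and used as a black box, so there is no internal proof to compare with. Judged on its own merits, your sketch identifies the right circle of ideas (finite-dimensional approximation plus a dimension count plus pluripotential theory), but the central quantitative claim is not only unproved, it is false as stated. You claim that every $f$ in the unit ball of $H^\infty(\D^m)$ can be approximated on $K$ by polynomials of total degree $\leq N$ with error $\Gamma_m(K)^{N(1+{\rm o}(1))}$. Test this on $K=\overline{D}(0,r)^m$: by B{\l}ocki's formula $\tau_m(K)=(\log(1/r))^{-m}$, so $\Gamma_m(K)=r^{(m!)^{1/m}}$, whereas the distance in $\mathcal C(K)$ from $f(z)=z_1^{N+1}$ to the total-degree-$\leq N$ polynomials is exactly $r^{N+1}$ (restrict to $z_2=\cdots=z_m=0$ and use the one-variable Chebyshev problem on $\overline{D}(0,r)$), which for $m\geq 2$ is enormously larger than $r^{(m!)^{1/m}N}$. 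Indeed, if your claimed rate held, combining it with $\dim\mathcal P_N\sim N^m/m!$ would give $\limsup_n [d_n(J)]^{1/n^{1/m}}\leq \Gamma_m(K)^{(m!)^{1/m}}<\Gamma_m(K)$, contradicting the exact limit of Theorem~\ref{Zakha-exact} for regular $K$. The factor $(m!)^{1/m}$ in $\Gamma_m$ is produced entirely by the dimension count; the per-degree approximation rate one should aim for is $\exp[-N\,\tau_m(K)^{-1/m}(1+{\rm o}(1))]$, not $\Gamma_m(K)^N$.

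Even after this correction the approach does not close, because subspaces of polynomials of \emph{total} degree $\leq N$ are not adapted to anisotropic $K$: for $K=\prod_k\overline{D}(0,r_k)$ with distinct $r_k$, truncation at total degree $N$ yields the rate $(\max_k r_k)^N$, governed by the largest radius, while $\exp[-N\tau_m(K)^{-1/m}]$ involves the geometric mean of the $\log(1/r_k)$. One must use weighted-degree truncations $\sum_k\lambda_k\alpha_k\leq A$ with weights tuned to $K$ -- exactly the mechanism of the paper's ``seminal example'' in Section~\ref{sec: capacity} -- or, for general $K$, Zakharyuta's extendible bases / doubly orthogonal systems, which is what his proof of Proposition~6.1 actually rests on; a Bernstein--Walsh inequality alone does not produce the approximating subspaces. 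Finally, you have the role of Nivoche's theorem reversed: it is the \emph{exact limit} (equivalently the matching lower bound for $d_n(J)$, Theorem~\ref{Zakha-exact}) that required the solution of Zakharyuta's conjecture; the upper bound stated here is the older, more elementary half, which is precisely why the paper can state it for an arbitrary compact $K$ with non-void interior, without any regularity hypothesis.
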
 

Note that $(K, \D^m)$ is a condenser since $K$ has non-void interior. We deduce the following upper bound.
\begin{theorem} \label{extension} 
Let $\Phi$ be an analytic self-map of  $\D^m$ with $\Vert \Phi \Vert_\infty = \rho < 1$, thus inducing a compact composition operator on 
$H^{2}(\D^m)$. Then we have:
\begin{displaymath} 
\beta_{m}^{+} (C_\Phi) \leq \Gamma_{m} \big( \overline{\Phi (\D^m)} \big) \, .
\end{displaymath} 
\end{theorem}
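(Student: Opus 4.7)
The plan is to factor $C_\Phi$ through the canonical injection $J_r\colon H^\infty(V_r)\to\mathcal{C}(K)$ with $V_r := r\D^m$ and $K := \overline{\Phi(\D^m)}$, apply Theorem~\ref{Zakha-upper} to $J_r$, and let $r\to 1^-$. Since $\rho := \|\Phi\|_\infty < 1$, we have $K \subseteq V_r$ for every $r\in(\rho,1)$.

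First I would establish the finitary estimate
\[
d_n(C_\Phi) \leq (1-r^2)^{-m/2}\, d_n(J_r) \quad \text{for every } r \in (\rho, 1) \text{ and } n \geq 1.
\]
Given a near-optimal $(n-1)$-dimensional subspace $L$ realizing $d_n(J_r)$ (which, as in the proofs of Widom and Zakharyuta, may be taken inside $H^\infty(V_r)$), set $E:=\{h\circ\Phi : h\in L\}\subseteq H^2(\D^m)$, a linear subspace of dimension at most $n-1$. For $f\in B_{H^2(\D^m)}$, the reproducing-kernel bound gives $\|R_r f\|_{H^\infty(V_r)} \leq (1-r^2)^{-m/2}$ (where $R_r$ denotes the restriction to $V_r$), so one can pick $h\in L$ with $\|R_r f - h\|_{\mathcal{C}(K)} \leq (1-r^2)^{-m/2}\, d_n(J_r)$. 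Since $\Phi(\D^m)\subseteq K$, the maximum principle then yields
\[
\|C_\Phi f - h\circ\Phi\|_{H^2(\D^m)} \leq \|(R_r f - h)\circ\Phi\|_\infty \leq \|R_r f - h\|_{\mathcal{C}(K)},
\]
which delivers the claim. The equality \eqref{egalite} of $s$-numbers in the Hilbert setting converts this into $a_n(C_\Phi) \leq (1-r^2)^{-m/2}\, d_n(J_r)$.

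Next I would apply Theorem~\ref{Zakha-upper} to $J_r$. Identifying $V_r$ biholomorphically with $\D^m$ via $z\mapsto z/r$ transports $K$ to $K/r\subseteq\D^m$, and a direct change of variables at the level of Bedford-Taylor extremal functions yields the scaling identity $\tau_m^{V_r}(K) = \tau_m(K/r)$. Combined with the previous step this gives $\beta_m^+(C_\Phi) \leq \Gamma_m(K/r)$ for every $r\in(\rho,1)$. Finally I would let $r\to 1^-$: the compacts $K/r$ decrease to $K$, so by outer regularity of the Monge-Amp\`ere capacity along decreasing sequences of compacts $\tau_m(K/r)\searrow\tau_m(K)$, whence $\Gamma_m(K/r)\searrow\Gamma_m(K)$, yielding the announced bound.

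The points requiring care are (i) the scaling identity $\tau_m^{V_r}(K)=\tau_m(K/r)$, a clean change of variables on extremal functions and their Monge-Amp\`ere currents; (ii) outer regularity of the Monge-Amp\`ere capacity for decreasing compacts, which is standard in Bedford-Taylor pluripotential theory; and (iii) the choice of analytic approximating subspaces in Zakharyuta's upper bound. If $K$ has empty interior (for instance when $\Phi$ is not truly $m$-dimensional) Theorem~\ref{Zakha-upper} does not apply directly, and one bypasses this by first replacing $K$ by the thickening $K+\varepsilon\overline{\D^m}$, running the argument there, and then letting $\varepsilon\to 0^+$ by outer regularity once more.
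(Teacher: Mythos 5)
Your proposal is correct in outline but follows a genuinely different route from the paper's. The paper applies Theorem~\ref{Zakha-upper} with ambient domain $\D^m$ itself and handles the mismatch between $H^2$ and $H^\infty$ by truncating $f\in B_{H^2}$ to its Taylor polynomial $g$ of degree $l$, for which $\Vert g\Vert_\infty \lesssim l^{m/2}$ while $\vert f-g\vert \lesssim l^{m/2}\rho^l$ on $K=\overline{\Phi(\D^m)}$; choosing $l\approx n\log n$ balances the two errors. You instead restrict $f$ to $r\D^m$, where the reproducing kernel makes every $H^2$ function uniformly bounded by $(1-r^2)^{-m/2}$, apply the rescaled Zakharyuta bound there, and recover $\Gamma_m(K)$ by letting $r\to 1^-$. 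This trades the degree-truncation bookkeeping for a scaling identity and a continuity argument for $\capam$; both are standard, but note that the family $K/r$ need not be decreasing as $r\uparrow 1$ (nothing makes $K$ star-shaped), so the clean way to conclude is to dominate $K/r$ by a closed $\eps(r)$-neighbourhood of $K$ with $\eps(r)\to 0$ and invoke outer regularity of the Bedford--Taylor capacity there, together with monotonicity of $\Gamma_m$.

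Two of the points you flag deserve more than a flag. First, Theorem~\ref{Zakha-upper} as stated produces an $(n-1)$-dimensional subspace $L$ of the \emph{target} $\mathcal{C}(K)$ and does not assert that $L$ consists of traces of analytic functions; without that, $h\circ\Phi$ need not lie in $H^2(\D^m)$ and your candidate subspace $E$ is not admissible. Rather than appealing to the internals of Zakharyuta's and Nivoche's constructions, you can use the paper's device: $h\circ\Phi^\ast$ makes sense as an element of $L^\infty(\T^m)\subseteq L^2(\T^m)$ because $\Phi^\ast$ takes values in $K$; take $E=P\big(\{h\circ\Phi^\ast \,;\ h\in L\}\big)$ with $P$ the orthogonal projection onto $H^2(\T^m)$, and since $C_\Phi f=P(f\circ\Phi^\ast)$ the distance estimate survives unchanged. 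With that repair your argument is complete. Second, your thickening of $K$ when it has empty interior is a welcome addition: the hypothesis of non-void interior in Theorem~\ref{Zakha-upper} is not granted by the hypotheses of Theorem~\ref{extension}, and the paper's own proof passes over this point silently.
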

\begin{proof}
This proof provides in particular a simplification of that given in \cite{LQR} in dimension $m = 1$.
\smallskip

Changing $n$ into $n^m$, Theorem~\ref{Zakha-upper} means that for every $\varepsilon > 0$, there exists an $(n^{m} - 1)$-dimensional subspace $V$ of 
$\mathcal{C}(K)$ such that, for any $g \in H^{\infty} (\D^m)$, there exists $h \in V$ such that:
\begin{equation} \label{demi}
\Vert g - h\Vert_{\mathcal{C}(K)} \leq C_\eps (1 + \varepsilon)^n \big[ \Gamma_{m}(K) \big]^n \Vert g \Vert_\infty \, .  
\end{equation} 

Let $l$ be an integer to be adjusted later, and  $f (z) = \sum_{\alpha} b_\alpha z^\alpha \in B_{H^2}$, as well as 
$g (z) = \sum_{|\alpha|\leq l} b_\alpha z^\alpha$. We first note that (with $M_m$ depending only on $m$ and $\rho$, and since the number of $\alpha$'s 
such that $|\alpha|\leq p$ is ${\rm O}\, (p^m)$): 
\begin{displaymath} 
\sum_{|\alpha|> l} \rho^{2|\alpha|} \leq M_m \sum_{p >  l} p^{m} \, \rho^{2 p} \leq M_m l^{m} \, \frac{\rho^{2 l}}{(1 - \rho^2)^{m + 1}} \, \cdot
\end{displaymath} 
We next observe that, by the Cauchy-Schwarz and Parseval inequalities:
\begin{equation}\label{une} 
\Vert g \Vert_\infty \leq M_m \, l^{m/2} \, ,
\end{equation}
and 
\begin{equation} \label{une-et-demi}
\qquad  |f (z) - g (z)| \leq M_m \, l^{m/2}\frac{|z|_{\infty}^{l} \ \quad}{(1 - |z|_{\infty}^2)^{(m + 1)/2}} \, \raise 1 pt \hbox{,} 
\qquad  \forall z \in \D^m \, .
\end{equation} 
where $|z|_\infty :=\max_{j \leq m} |z_j|$ if $z = (z_1, \ldots, z_m)$.
\smallskip

The subspace $F$ formed by functions $v \circ \Phi$, for $v\in V$, can be viewed as a subspace of $L^{\infty}(\T^m) \subseteq L^{2}(\T^m)$ with respect 
to the Haar measure of $\T^m$, the distinguished boundary of $\D^m$ (indeed, we can write $(v \circ \Phi)^\ast = v \circ \Phi^\ast$, where $\Phi^\ast$ 
denotes the almost everywhere existing radial limits of $\Phi (r z)$, which belong to $K$). Let finally $E = P (F) \subseteq H^2 (\D^m)$ where 
$P \colon L^{2}(\T^m)\to H^2 (\T^m) = H^2 (\D^m)$ is the orthogonal projection. This is a subspace of $H^2$ with dimension $< n^m$.  Set temporarily 
$\eta = C_\eps (1 + \varepsilon)^n \big[ \Gamma_{m}(K) \big]^n$. It follows from \eqref{demi} and \eqref{une} that, for some $h \in V$:
\begin{displaymath} 
\Vert g - h \Vert_{\mathcal{C}(K)} \leq \eta \, \Vert g \Vert_\infty \leq \eta \, M_m \, l^{m/2} 
\end{displaymath} 
and hence:
\begin{displaymath} 
\Vert g \circ \Phi - h \circ \Phi \Vert_{2} \leq \Vert g \circ \Phi - h \circ \Phi \Vert_{\infty} \leq \eta \, M_m \, l^{m/2} \, , 
\end{displaymath} 
implying by orthogonal projection:
\begin{displaymath} 
{\rm dist}\, (C_\Phi g, E) \leq \Vert g \circ \Phi - P (h \circ \Phi) \Vert_{2} \leq \eta \, M_m \, l^{m/2} \, .
\end{displaymath} 
Now, since $C_\Phi f (z) - C_\Phi g (z) = f \big( \Phi (z) \big) - g \big( \Phi (z) \big)$, \eqref{une-et-demi} gives:  
\begin{displaymath}  
\Vert C_\Phi f - C_\Phi g \Vert_2 \leq \Vert C_\Phi f - C_\Phi g \Vert_\infty \leq M_m \, l^{m/2} \, \frac{\rho^l}{(1 - \rho^2)^{(m + 1)/2}} 
\end{displaymath} 
and hence: 
\begin{displaymath} 
{\rm dist}\, (C_\Phi f, E) 
\leq M_m \, l^{m/2} \,\bigg( \frac{\rho^l}{(1 - \rho^2)^{(m + 1)/2}} + C_\eps (1 + \eps)^n \big[ \Gamma_{m} (K) \big]^n \bigg) \, .
\end{displaymath}
It ensues, since $a_N (C_\Phi) = d_N (C_\Phi)$, that:
\begin{displaymath} 
\big[ a_{n^m} (C_\Phi) \big]^{1/n}
\leq  (M_m \, l^{m/2})^{1/n} \, \bigg[ \frac{\rho^{l/n}}{(1 - \rho^2)^{(m + 1)/2n}} + C_\eps^{1/n} (1 + \eps) \,  \Gamma_{m}(K) \bigg] \, .
\end{displaymath} 
Taking now for $l$ the integer part of $n \log n$, and passing to the upper limit as $n \to \infty$, we obtain (since $l / n \to \infty$ and $(\log l)/n\to 0$):
\begin{displaymath} 
\beta_{m}^{+} (C_\Phi) \leq (1 + \eps) \,\Gamma_{m} (K) \, ,
\end{displaymath} 
and Theorem~\ref{extension} follows.
\end{proof}
%

\bigskip
\noindent{\bf Acknowledgements:} The two first-named authors would like to thank the colleagues of the University of Sevilla for their kind hospitality, which 
allowed a pleasant and useful stay, during which this collaboration was initiated. They also thank E.~Fricain, S.~Nivoche, J.~Ortega-Cerd\`a, and A.~Zeriahi 
for useful discussions and informations.\par\smallskip

\noindent The third-named author is partially supported by the project MTM2015-63699-P (Spanish MINECO and FEDER funds).

\bigskip


\smallskip

{\footnotesize
Daniel Li \\ 
Univ. Artois, Laboratoire de Math\'ematiques de Lens (LML) EA~2462, \& F\'ed\'eration CNRS Nord-Pas-de-Calais FR~2956, 
Facult\'e Jean Perrin, Rue Jean Souvraz, S.P.\kern 1mm 18 
F-62\kern 1mm 300 LENS, FRANCE \\
daniel.li@euler.univ-artois.fr
\smallskip

Herv\'e Queff\'elec \\
Univ. Lille Nord de France, USTL,  
Laboratoire Paul Painlev\'e U.M.R. CNRS 8524 \& F\'ed\'eration CNRS Nord-Pas-de-Calais FR~2956 
F-59\kern 1mm 655 VILLENEUVE D'ASCQ Cedex, FRANCE \\
Herve.Queffelec@univ-lille1.fr
\smallskip
 
Luis Rodr{\'\i}guez-Piazza \\
Universidad de Sevilla, Facultad de Matem\'aticas, Departamento de An\'alisis Matem\'atico \& IMUS,  
Apartado de Correos 1160 
41\kern 1mm 080 SEVILLA, SPAIN \\
piazza@us.es
}

\end{document}